\tikzstyle{every picture}=[> = to]
\tikzset{cdlabel/.style={execute at begin node=$\scriptstyle,execute at end node=$}}
\tikzset{implication/.style={double equal sign distance, -implies}}
\tikzset{biimplication/.style={double equal sign distance, implies-implies}}
\newcommand\mi@kern[1]{%
  \settowidth\@tempdima{$\mi@obj^{#1}$}
  \kern-\@tempdima
  #1
  \settowidth\@tempdima{$\mi@obj$}
  \kern\@tempdima
}
\newtoks\mi@toksp
\newtoks\mi@toksb
\DeclareRobustCommand{\manyindices}[5]{
  \def\mi@obj{#5}
  \mi@toksp\expandafter{\mi@kern{#2}}
  \mi@toksb\expandafter{\mi@kern{#1}}
  \@mathmeasure4\textstyle{#5_{#1}^{#2}}
  \@mathmeasure6\textstyle{#5_{#3}^{#4}}
  \dimen0-\wd6 \advance\dimen0\wd4
  \@mathmeasure8\textstyle{\hphantom{{}_{#1}^{#2}}#5^{\the\mi@toksp#4}_{\the\mi@toksb#3}}
  \hbox to \dimen0{}{\kern-\dimen0\box8}
}
\newread\testin
\def\mathcenter#1{\vcenter{\hbox{$#1$}}}
\def\graphb#1{\includegraphics[trim=-1 -1 -1 -1]{#1}}
\def\mfigb#1{\mathcenter{\graphb{#1}}}
\renewcommand{\colon}{\nobreak\mskip2mu\mathpunct{}\nonscript
  \mkern-\thinmuskip{:}\allowbreak\mskip6muplus1mu\relax}
\newcommand{\RR}{\mathbb R}
\newcommand{\CC}{\mathbb C}
\newcommand{\DD}{\mathbb D}
\newcommand{\ZZ}{\mathbb Z}
\newcommand{\QQ}{\mathbb Q}
\newcommand{\co}{\colon}
\newcommand{\eps}{\varepsilon}
\renewcommand{\epsilon}{\varepsilon}
\newcommand{\abs}[1]{\lvert #1 \rvert}
\newcommand{\norm}[1]{\lVert #1 \rVert}
\newcommand{\bdy}{\partial}
\renewcommand{\Im}{\mathop{\mathrm{Im}}}
\renewcommand{\Re}{\mathop{\mathrm{Re}}}
\theoremstyle{plain}
\numberwithin{equation}{section}
\newtheorem{proposition}[equation]{Proposition}
\newtheorem{lemma}[equation]{Lemma}
\newtheorem{conjecture}[equation]{Conjecture}
\newtheorem{theorem}{Theorem}
\newtheorem{citethm}[equation]{Theorem}
\theoremstyle{definition}
\newtheorem{definition}[equation]{Definition}
\newtheorem{convention}[equation]{Convention}
\newtheorem{problem}[equation]{Problem}
\newtheorem{claim}[equation]{Claim}
\theoremstyle{remark}
\newtheorem{example}[equation]{Example}
\newtheorem{remark}[equation]{Remark}
\theoremstyle{plain}
\newcommand{\closure}[1]{\overline{#1}}
\newcommand{\D}{\DD}
\DeclareMathOperator{\EL}{EL} 
\DeclareMathOperator{\SF}{SF} 
\DeclareMathOperator{\Mod}{Mod} 
\newcommand{\Teich}{\mathcal{T}}
\newcommand{\CCa}{\widehat{\CC}}
\newcommand{\MF}{\mathcal{MF}}
\newcommand{\Curves}{\mathcal{C}}
\newcommand{\Quad}{\mathcal{Q}}
\newcommand{\Meas}{\mathcal{M}}
\newcommand{\id}{\mathrm{id}}
\newcommand{\wt}[1]{\widetilde{#1}}
\newcommand{\oJ}{{\overline{J}}}
\newcommand{\oS}{{\overline{S}}}
\newcommand{\oDD}{\overline{\DD}}
\newcommand{\Fh}{\mathcal{F}_h}
\newcommand{\Fv}{\mathcal{F}_v}
\definecolor{dark-green}{rgb}{0,0.6,0}
\definecolor{dark-red}{rgb}{0.7,0,0}
\definecolor{dark-blue}{rgb}{0,0,0.8}
\begin{document}
\title{Conformal surface embeddings and extremal length}

\author[Kahn]{Jeremy Kahn}
\address{Brown University\\
	 151 Thayer Street,
         Providence, RI 02912\\
         USA}
\email{jeremy\_kahn@brown.edu}

\author[Pilgrim]{Kevin M.~Pilgrim}
\address{Indiana University\\
         831 E. Third St.,
         Bloomington, Indiana 47405\\
         USA}
\email{pilgrim@indiana.edu}

\author[Thurston]{Dylan~P.~Thurston}
\address{Indiana University\\
         831 E. Third St.,
         Bloomington, Indiana 47405\\
         USA}
\email{dpthurst@indiana.edu}

\subjclass[2010]{Primary 30F60; Secondary 31A15, 32G15}
\keywords{Riemann surfaces with boundary, conformal embeddings, extremal length}

\begin{abstract}
  Given two Riemann surfaces with boundary and a homotopy class of
  topological embeddings between them, there is a conformal embedding
  in the homotopy class if and only if the extremal length of every simple
  multi-curve is decreased under the embedding. Furthermore, the
  homotopy class has a conformal embedding that misses an open disk
  if and only if extremal lengths are decreased by a definite
  ratio. This ratio remains bounded away from one under finite covers.
\end{abstract}

\maketitle

  \tableofcontents

\section{Introduction}
\label{sec:intro}

Let $R$ and $S$ be two Riemann surfaces of finite topological type,
possibly with
boundary, and let $f \co
R \hookrightarrow S$ be a topological embedding. The goal
of this paper is to give conditions for $f$ to
be homotopic to a \emph{conformal} embedding, possibly with extra nice
properties.
We give an answer in terms of ratios of extremal
lengths of simple multi-curves.

For us, surfaces~$S$ are of finite type and a simple multi-curve
on~$S$ is an embedded 1-manifold
in~$S$. See Definitions~\ref{def:surface}
and~\ref{def:curves} for the full
definitions.
The \emph{extremal length} $\EL_S[C]$ of a simple curve~$C$
is a measure of the fattest annulus
that can be embedded in~$S$ with core curve isotopic to~$C$. See
Section~\ref{sec:ext-length} for more on extremal length of
multi-curves.

\begin{definition}\label{def:sf}
  For $f \co R \hookrightarrow S$ a topological
  embedding of Riemann surfaces, the \emph{stretch factor} of~$f$
  is the maximal ratio of extremal lengths between the two surfaces:
  \begin{equation*}
    \SF[f] \coloneqq \sup_{C \in \Curves^+(R)}
      \frac{\EL_S[f(C)]}{\EL_R[C]},
  \end{equation*}
  where the supremum runs over all simple multi-curves~$C$ with
  $\EL_R[C] \ne 0$.
\end{definition}

We will show that $\SF[f]$ is achieved by a ratio of
extremal lengths of two measured foliations, not multi-curves. But $f$
does not induce a natural continuous map between measured foliations
(Example~\ref{examp:erase-hole}), so Definition~\ref{def:sf} is stated
in terms of multi-curves.

\begin{theorem}\label{thm:emb}
  Let $R$ and $S$ be Riemann surfaces and $f\co
  R \hookrightarrow S$ be a
  topological embedding so that no component of $f(R)$ is contained in
  a disk or a once-punctured disk. Then $f$
  is homotopic to a conformal embedding if and only if $\SF[f] \le 1$.
\end{theorem}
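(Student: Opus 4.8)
The plan is to prove the two directions separately: the forward direction is soft, and the reverse direction carries essentially all the weight.

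\textbf{A conformal embedding forces $\SF[f]\le 1$.} Let $g$ be a conformal embedding homotopic to $f$ and $C$ a weighted simple multi-curve on $R$ with $\EL_R[C]\ne 0$. A conformal metric on $S$ restricts to one on the open subsurface $g(R)$, and every representative of the class $g(C)$ lying in $g(R)$ is in particular a representative in $S$, so the admissibility constraint defining the modulus is at least as stringent in $S$ as in $g(R)$; hence $\Mod_S[g(C)]\ge\Mod_{g(R)}[g(C)]=\Mod_R[C]$ by conformal invariance, i.e.\ $\EL_S[f(C)]\le\EL_R[C]$. Taking the supremum over $C$ gives $\SF[f]\le 1$. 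This direction uses neither the hypothesis on $f(R)$ nor anything about $f$ beyond its being the class of an embedding.

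\textbf{Reductions for the converse.} Assume $\SF[f]\le 1$. I would first reduce to $R$ compact with boundary, exhausting $R$ by compact bordered subsurfaces $R_1\subset R_2\subset\cdots$ with $\bigcup_n R_n = R$. Monotonicity of extremal length under the conformal inclusions $R_n\hookrightarrow R$ gives $\EL_{R_n}[C]\ge\EL_R[C]$ for curves present in both, and for a curve around a punctured end of $R$ that is filled in $R_n$, its $f$-image is null-homotopic or peripheral in $S$ and hence has zero extremal length — here one invokes $\SF[f]\le 1$ to exclude a puncture of $R$ landing on a boundary curve of $S$, which would already make $\SF[f]$ infinite. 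Thus $\SF[f|_{R_n}]\le\SF[f]\le 1$. Granting the theorem for compact $R_n$ gives conformal embeddings $g_n\co R_n\hookrightarrow S$ homotopic to $f|_{R_n}$; their images lie in a fixed homotopy class in $S$ and so cannot escape into a puncture or boundary of $S$, so a normal-families argument (with Hurwitz guaranteeing injectivity of the limit) extracts a conformal embedding $g\co R\hookrightarrow S$ homotopic to $f$.

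\textbf{Construction for $R$ compact.} The plan is to locate the foliation achieving the stretch factor and put both surfaces into flat normal form. Choosing weighted multi-curves $C_n$ with $\EL_R[C_n]=1$ and $\EL_S[f(C_n)]\to\SF[f]$, pass to a subsequence so that $C_n$ converges projectively to $\mathcal F\in\MF(R)$ with $\EL_R[\mathcal F]=1$ and $f(C_n)$ converges to $\mathcal G\in\MF(S)$ with $\EL_S[\mathcal G]=\SF[f]\le 1$; the limit $\mathcal G$ is nonzero because $\SF[f]=0$ would force the images into disks or once-punctured disks, contrary to hypothesis. Let $q_R$ and $q_S$ be the quadratic differentials (of Jenkins--Strebel type relative to the boundary) with horizontal foliations $\mathcal F$ and $\mathcal G$; in the flat metrics $\lvert q_R\rvert$ and $\lvert q_S\rvert$ the surfaces decompose along horizontal trajectories, the inequality $\EL_S[\mathcal G]\le\EL_R[\mathcal F]$ becomes a comparison of the widths of these decompositions, and the conformal embedding is assembled by carrying the horizontal-trajectory structure of $R$ into that of $S$: in the pure multi-curve case, a cylinder packing that places each flat cylinder of $R$ disjointly inside a corresponding cylinder of $S$ and glues the pieces along an embedding of the critical graph of $q_R$ into $S$.

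\textbf{The main obstacle.} The delicate point — where I expect nearly all of the difficulty to lie — is that the comparison $\EL_S[\mathcal G]\le\EL_R[\mathcal F]$ for the single extremal foliation does not organize the packing on its own: one must embed the critical graph of $q_R$ into $S$ leaving disjoint room for every cylinder and in a way homotopic to $f$, and ruling out overlaps is exactly where the full strength of $\SF[f]\le 1$ over all multi-curves is needed. I would attack this by a continuity/deformation argument, starting from an approximate holomorphic model built from the two flat structures and deforming it to an honest conformal embedding while keeping the extremal-length budget under control at each stage, the hypothesis on $f(R)$ preventing any part of $R$ from collapsing into a disk or onto a puncture. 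A possibly cleaner alternative is to establish a duality identifying $\SF[f]$ with an infimum over maps homotopic to $f$ of a conformal embedding modulus — or of a Dirichlet-type energy $\Dir$ — so that the value $\le 1$ yields an embedding directly; then the crux migrates to showing the multi-curve supremum is attained and equals that infimum, which is once more most naturally done via the extremal foliation and its quadratic differential.
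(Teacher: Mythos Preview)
Your forward direction is fine and matches the paper. The converse, however, has a genuine gap, and the paper takes a completely different route.

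The paper does \emph{not} attempt to construct a conformal embedding directly from the hypothesis $\SF[f]\le 1$. Instead it proves the contrapositive: if $f$ is not homotopic to a conformal embedding, then $\SF[f]>1$. The key input is Ioffe's theorem (Theorem~\ref{thm:ioffe}): in the absence of a conformal embedding there is a Teichm\"uller embedding of dilatation $K>1$ with respect to quadratic differentials $q_R\in\Quad^+(R)$ and $q_S\in\Quad^+(S)$, and $f$ is a slit map with $f^*\Fh(q_S)=\Fh(q_R)$. One then approximates $\Fh(q_R)$ and $\Fh(q_S)$ \emph{simultaneously} by a single sequence of multi-curves $C_n$ and their images $f(C_n)$ (Proposition~\ref{prop:slit-approx}), and reads off $\SF[f]\ge \|q_S\|/\|q_R\|=K>1$ from continuity of extremal length.

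Your direct approach runs into two concrete problems. First, the step ``pass to a subsequence so that $C_n\to\mathcal F$ and $f(C_n)\to\mathcal G$'' is not innocent: the pushforward $f_*\co\Curves^+(R)\to\Curves^+(S)$ does \emph{not} extend continuously to $\MF^+(R)\to\MF^+(S)$ in general (the paper gives an explicit counterexample in Example~\ref{examp:erase-hole}), so there is no reason the two limits are related in a way that lets you compare flat structures. The paper's Proposition~\ref{prop:slit-approx} handles exactly this issue, but only \emph{after} Ioffe's theorem has produced the slit-map structure that makes the train-track argument work on both sides at once. Second, even granting compatible limiting foliations, your ``cylinder packing'' and ``embed the critical graph'' step is, as you yourself flag, the entire content of the theorem; what you are proposing is essentially to reprove Ioffe's theorem from scratch. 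The deformation/continuity argument and the duality idea you sketch are both plausible research programs, but neither is a proof.

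The compactness reduction for $R$ is also unnecessary in the paper's setting (surfaces are already of finite type with compact closure), and your handling of punctures there is somewhat off: a puncture of $R$ mapping near $\partial S$ does not make $\SF[f]$ infinite in any obvious way.
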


The key part of Theorem~\ref{thm:emb} is due to Ioffe
\cite{Ioffe75:QCImbedding}.
In fact, his results show that if $\SF[f] \ge 1$, it is related to the
quasi-conformal constant.

\begin{proposition}\label{prop:sf-qc}
  Let $f\co R \hookrightarrow S$ be a topological
  embedding of Riemann surfaces. If $\SF[f] \ge 1$, then
  $\SF[f]$ is equal to the smallest quasi-conformal constant of any
  quasi-conformal embedding homotopic to~$f$.
\end{proposition}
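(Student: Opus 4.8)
The plan is to deduce Proposition~\ref{prop:sf-qc} from Ioffe's theorem \cite{Ioffe75:QCImbedding}, which characterizes, for a fixed homotopy class of embedding $f \co R \hookrightarrow S$ with $R$ and $S$ of finite type, the infimal quasi-conformal dilatation $K(f)$ among embeddings homotopic to~$f$, and identifies the extremal embedding as a Teichmüller-type embedding governed by a holomorphic quadratic differential on~$R$. Concretely, I would first set $K \coloneqq \inf\{K(g) : g \simeq f, \ g \text{ quasi-conformal}\}$ and show $\SF[f] \le K$: a $K$-quasi-conformal map distorts extremal length by at most a factor of~$K$ in each direction, so for every simple multi-curve~$C$ with $\EL_R[C] \ne 0$ we get $\EL_S[f(C)] \le K \cdot \EL_R[C]$, and taking the supremum gives $\SF[f] \le K$. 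This direction is soft and uses only the standard quasi-conformal distortion estimate for extremal length (Section~\ref{sec:ext-length}).

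The reverse inequality $K \le \SF[f]$ is where Ioffe's work does the heavy lifting. Assuming $\SF[f] \ge 1$, I would invoke Ioffe's description of the extremal embedding: there is a quadratic differential $q$ on~$R$ and a Teichmüller-like embedding $g_0 \simeq f$ of dilatation exactly $K$, and the extremal problem is \emph{dual} to a problem about the horizontal foliation of~$q$ — the constant $K$ is realized as the ratio of the extremal length of (a measured foliation approximated by) simple multi-curves on~$R$ to their extremal length after embedding. More precisely, the stretching happens along the $q$-geometry, and there is a sequence of simple multi-curves $C_n$, obtained by approximating the horizontal measured foliation of~$q$ by weighted simple closed curves, whose extremal-length ratios $\EL_S[g_0(C_n)]/\EL_R[C_n]$ converge to $K$. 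Since $g_0 \simeq f$, extremal lengths of $g_0(C_n)$ and $f(C_n)$ agree, so $\SF[f] \ge \sup_n \EL_S[f(C_n)]/\EL_R[C_n] \to K$, giving $\SF[f] \ge K$. Combined with the first paragraph, $\SF[f] = K$.

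The main obstacle is the passage from measured foliations back to simple multi-curves: Ioffe's extremal configuration is naturally expressed via the quadratic differential~$q$ and its horizontal foliation, not via an actual simple closed curve, and one must check that the extremal-length ratio is \emph{continuous} (or at least lower semi-continuous in the right direction) along a sequence of simple multi-curves converging to the foliation. This requires knowing that $\EL_R[C_n] \to \EL_R[\mathcal{F}]$ and $\EL_S[f(C_n)] \to \EL_S[f_*\mathcal{F}]$ for the approximating sequence — the extension of extremal length to measured foliations and its continuity (homogeneity of degree two plus density of weighted multi-curves in $\MF$) is exactly what Section~\ref{sec:ext-length} is set up to provide, but care is needed because, as the authors note in the remark after Definition~\ref{def:sf}, $f$ does not act continuously on all of~$\MF$. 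One circumvents this by noting that only the \emph{specific} foliation coming from~$q$ is needed, and for that foliation the relevant multi-curves can be taken in a compact part of the surface where $f$ is well-behaved. A secondary point to nail down is that Ioffe's hypotheses match ours — in particular that "$R$, $S$ Riemann surfaces of finite topological type with boundary" is the setting Ioffe treats (doubling across the boundary, if necessary, to reduce to the closed or punctured case), and that the normalization $\SF[f] \ge 1$ is what puts us in the range where the extremal quasi-conformal constant is $\ge 1$ and Ioffe's duality applies rather than the degenerate case where $f$ is already homotopic to a conformal embedding.
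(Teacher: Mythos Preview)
Your overall strategy matches the paper's: $\SF[f]\le K$ from the standard quasi-conformal distortion of extremal length (Lemma~\ref{lem:qc-sf}), and $\SF[f]\ge K$ by approximating the horizontal foliation of the Ioffe/Teichm\"uller quadratic differential by simple multi-curves whose extremal-length ratios tend to~$K$ (this is Equation~\eqref{eq:SF-bound} in the proof of Theorem~\ref{thm:emb}). You have also correctly flagged the real difficulty: since $f_*$ does not extend continuously to $\MF^+(R)$, one needs $\lambda_n[C_n]\to F_R$ \emph{and} $\lambda_n[f(C_n)]\to F_S$ simultaneously.

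Your proposed resolution of this obstacle, however, is not the right mechanism. Saying the multi-curves ``can be taken in a compact part of the surface where $f$ is well-behaved'' does not address the issue: the surfaces are already compact, and the approximating multi-curves necessarily fill~$R$. The paper's device (Proposition~\ref{prop:slit-approx}) instead uses the slit-map structure in an essential way. Carry $F_R$ on a taut train track~$T_R$; because the extremal embedding is a \emph{slit map} with respect to $\Fh(q_S)$, each new complementary disk of $f(T_R)$ created by filling in a slit acquires at least two cusps (from the endpoints of the slit tree), so $T_S\coloneqq f(T_R)$ is still taut on~$S$ and carries~$F_S$. The \emph{same} rational weights~$w_n$ then give multi-curves $T_R(w_n')$ and $T_S(w_n')=f(T_R(w_n'))$ converging to $F_R$ and $F_S$ respectively, by Lemma~\ref{lem:tt-continuous}. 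Without invoking the slit structure there is no reason the images should converge---Example~\ref{examp:erase-hole} is exactly a situation where they do not. So your outline is correct, but the sentence explaining how to beat the discontinuity of $f_*$ should be replaced by this train-track/tautness argument.
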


We can also characterize conformal embeddings with some extra ``room''.

\begin{definition}
  Let $f \co R \hookrightarrow
  S$ be a conformal embedding between Riemann surfaces. We say that $f$ is a
  \emph{strict} embedding if its image omits a
  non-empty open subset of each component of~$S$.
  An
  \emph{annular extension} of a Riemann surface $S$
  is a surface $\widehat S$ obtained by attaching a non-empty conformal
  annulus to each boundary component, with the boundary of $S$
  smoothly embedded in $\widehat S$. An \emph{annular
  conformal embedding} is one that extends to a conformal embedding
  $\widehat{R} \hookrightarrow S$ for some annular
  extension $\widehat{R}$ of~$R$.
\end{definition}

\begin{remark}
  A similar relation for subsets of $\CCa$ is sometimes written $f(R)
  \Subset S$ \cite[inter alia]{CPT16:Renorm}.
\end{remark}

\begin{theorem}\label{thm:strict-emb}
  Let $R$ and $S$ be Riemann surfaces, with $S$ connected, and let $f\co
  R \hookrightarrow S$ be a
  topological embedding so that no component of $f(R)$ is contained in
  a disk or a once-punctured disk. Then the following conditions are equivalent:
  \begin{enumerate}
  \item\label{item:strict-strict} $f$ is homotopic to a strict
    conformal embedding;
  \item\label{item:strict-annular} $f$ is homotopic to an annular
    conformal embedding;
  \item\label{item:strict-ball} there is a neighborhood $N$ of $S$ in
    Teichmüller space so that, for all $S' \in N$, $f$ is homotopic to a
    conformal embedding of $R$ in~$S'$; and
  \item\label{item:strict-sf} $\SF[f] < 1$.
  \end{enumerate}
\end{theorem}

\begin{remark}
  When $\SF[f] = 1$, the embedding guaranteed by Theorem~\ref{thm:emb}
  is instead a Teichmüller embedding in the sense of Definition~\ref{def:slit},
  with $K=1$ \cite{FB18:Couch}.
\end{remark}

In condition~(\ref{item:strict-ball}), $\SF[f]$ is related to the
size of the ball in Teichmüller space.

\begin{definition}\label{def:teich-embed}
  Let $f \co R \subset S$ be a topological embedding of Riemann
  surfaces. Let $\mathcal{T}_R(S)$ be the subset of the Teichmüller
  space $\mathcal{T}(S)$ for which there is a conformal embedding
  of~$R$ in the homotopy class~$[f]$. (This is empty if $\SF[f] > 1$.)
\end{definition}

\begin{proposition}\label{prop:sf-dist}
  Let $f\co R \hookrightarrow S$ be a topological
  embedding of Riemann surfaces, and suppose $\SF[f] \le 1$.
  Then
  \[
    d(S, \partial\Teich_R(S)) =
    -\frac{1}{2}\log \SF[f].
  \]
\end{proposition}

We can also control the behavior of the
stretch factor under
taking covers. Proposition~\ref{prop:sf-qc} guarantees that when
$\SF[f] \ge 1$, the stretch factor is unchanged under taking
finite covers (see Proposition~\ref{prop:SF-large-cover}).
We can control what happens when $\SF[f] < 1$, as
well.

\begin{definition}\label{def:cover-map}
  For $f \co R \hookrightarrow S$ a topological
  embedding of Riemann surfaces and $p \co \wt S \to S$ a covering
  map, the corresponding \emph{cover} of $f$ is
  the pull-back map~$\wt f$ in the diagram
  \[
  \begin{tikzpicture}
    \matrix[row sep=0.7cm,column sep=0.8cm] {
      \node (S1t) {$\wt R$}; &
        \node (S2t) {$\wt S$}; \\
      \node (S1) {$R$}; &
        \node (S2) {$S$.}; \\
    };
    \draw[->] (S1) to node[auto=left,cdlabel] {f} (S2);
    \draw[->,dashed] (S1t) to node[auto=left,cdlabel] {\wt f} (S2t);
    \draw[->] (S2t) to node[right,cdlabel] {p} (S2);
    \draw[->,dashed] (S1t) to node[right,cdlabel] {q} (S1);
  \end{tikzpicture}
  \]
  Explicitly, we have
  \begin{align*}
    \wt R &\coloneqq \bigl\{\,(r, \wt{s}) \in R \times \wt{S}
      \mid f(r) = p(\wt{s})\,\bigr\}\\
    \wt{f}(r,\wt{s}) &\coloneqq \wt{s}\\
    q(r,\wt{s}) &\coloneqq r.
  \end{align*}
  Then $\wt f$ is a topological embedding and $q$ is a
  covering map.
  We may also say that $\wt f$ is a cover of $f$, without
  specifying~$p$.
\end{definition}

\begin{definition}\label{def:lifted-stretch}
  For $f \co R \hookrightarrow S$ a topological
  embedding of Riemann surfaces, the \emph{lifted stretch factor}
  $\wt{\SF}[f]$ is
  \[
  \wt{\SF}[f] \coloneqq \sup_{\substack{\text{$\wt f$ finite}\\\text{cover of $f$}}} \SF[\wt f].
  \]
\end{definition}

\begin{theorem}\label{thm:sf-cover}
  Let $f \co R \hookrightarrow S$ be a topological
  embedding of Riemann surfaces. If $\SF[f] \ge 1$, then
  $\wt\SF[f] = \SF[f]$. If $\SF[f] < 1$, then
  \[
  \SF[f] \le \wt\SF[f] < 1.
  \]
\end{theorem}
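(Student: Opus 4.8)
The plan is to handle the two hypotheses separately; in both cases the inequality $\SF[f]\le\wt\SF[f]$ is automatic, since $f$ is its own cover under $p=\id_S$, so the task is to bound $\wt\SF[f]$ from above. Suppose first that $\SF[f]\ge 1$; I would show $\wt\SF[f]\le\SF[f]$, which is Proposition~\ref{prop:SF-large-cover}. By Proposition~\ref{prop:sf-qc} there is a quasi-conformal embedding $g\simeq f$ of dilatation $K=\SF[f]$. Given a finite cover $p\co\wt S\to S$, with $\wt f\co\wt R\hookrightarrow\wt S$ and $q\co\wt R\to R$ as in Definition~\ref{def:cover-map}, the map $g\circ q$ lifts through $p$ (the lifting criterion holds because $g\simeq f$ and $f\circ q=p\circ\wt f$), and the homotopy $g\simeq f$ lifts too, so the lift $\wt g$ satisfies $\wt g\simeq\wt f$. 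Since $p$ and $q$ are local conformal isomorphisms, $\wt g$ is again $K$-quasi-conformal, and a $K$-quasi-conformal embedding distorts the extremal length of any curve family by at most $K$; hence $\SF[\wt f]=\sup_{\wt C}\EL_{\wt S}[\wt g(\wt C)]/\EL_{\wt R}[\wt C]\le K$. Taking the supremum over finite covers gives $\wt\SF[f]\le\SF[f]$, hence equality.

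Now suppose $\SF[f]<1$; the claim is the strict inequality $\wt\SF[f]<1$ (that $\wt\SF[f]\le 1$ will be automatic from the construction). I would reduce to a model estimate. By Theorem~\ref{thm:strict-emb}, $f$ is homotopic to an annular conformal embedding: there is a conformal embedding $g\co\widehat R\hookrightarrow S$ with $g|_R\simeq f$, where $R\subset\widehat R$ and $\widehat R\setminus R$ is a disjoint union of conformal collars, one around each boundary component of $R$, each of modulus at least some $\eta>0$. For a finite cover $p\co\wt S\to S$, pull $g$ back along $p$; this produces a finite cover $\widehat q\co\widehat{\wt R}\to\widehat R$ carrying a conformal structure, a conformal embedding $\widehat{\wt g}\co\widehat{\wt R}\hookrightarrow\wt S$, the canonical identification $\widehat q^{-1}(R)=\wt R$, and $\widehat{\wt g}|_{\wt R}\simeq\wt f$. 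Since a conformal embedding does not increase extremal length, for every multi-curve $\wt C$ in $\wt R$,
\[
\EL_{\wt S}[\wt f(\wt C)]=\EL_{\wt S}[\widehat{\wt g}(\wt C)]\le\EL_{\widehat{\wt R}}[\wt C].
\]
Moreover $\wt R\subset\widehat{\wt R}$ is nothing but a finite cover of the inclusion $R\subset\widehat R$ (the attached collars lift to collars of the covers of $R$). So the theorem reduces to the following model estimate: there is $\delta>0$, depending only on the pair $R\subset\widehat R$, such that for every finite cover $X'\subset\widehat{X'}$ of $R\subset\widehat R$ and every multi-curve $C$ in $X'$ one has $\EL_{\widehat{X'}}[C]\le(1-\delta)\,\EL_{X'}[C]$; granting this, $\SF[\wt f]\le 1-\delta$ for every cover, hence $\wt\SF[f]\le 1-\delta<1$.

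For the model estimate, let $\rho$ be the extremal metric for $C$ on $\widehat{X'}$, normalized to total area $1$, so $\EL_{\widehat{X'}}[C]=L_\rho(C)^2$ with $L_\rho$ the relevant length functional. Restricting $\rho$ to $X'$ and feeding it into the definition of extremal length there, and using that a system of representatives of $C$ contained in $X'$ is also one in $\widehat{X'}$ (so $L_{\rho|_{X'}}(C)\ge L_\rho(C)$), we get
\[
\EL_{X'}[C]\ \ge\ \frac{L_\rho(C)^2}{\Area_\rho(X')}\ =\ \frac{\EL_{\widehat{X'}}[C]}{1-\theta},\qquad \theta:=\Area_\rho(\widehat{X'}\setminus X'),
\]
so $\EL_{\widehat{X'}}[C]\le(1-\theta)\EL_{X'}[C]$, and everything comes down to a uniform lower bound $\theta\ge\delta$. \textbf{This is the main obstacle:} the extremal metric for $C$ on $\widehat{X'}$ must put a definite fraction of its mass in the collars, even though in the cover those collars have modulus only $\ge\eta/\deg\widehat q$, which degenerates. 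I would establish it by transporting the downstairs strict inequality $\SF[R\subset\widehat R]<1$ from Theorem~\ref{thm:strict-emb}: in the Jenkins--Strebel picture for $\rho$, extremality forces each characteristic cylinder of a component of $C$ to protrude a definite amount into the adjacent collar --- otherwise one could widen that cylinder through a modulus-$\ge\eta$ collar and lower $\EL_{\widehat{X'}}[C]$ --- and summing these contributions bounds $\theta$ below independently of the cover. Uniformity is plausible because $\theta$ is a ratio and ratios are scale-invariant: on a degree-$d$ cover both the collar moduli and the pertinent curve extremal lengths scale by $d$ (as one checks directly for covers of a single annulus), so $\theta$ is stable; the work is to make this precise for all multi-curves and all finite covers at once. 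Alternatively, one can use the measured-foliation description of $\SF$ developed later in the paper: $\SF[R\subset\widehat R]$ is realized by a pair $(\mathcal F,\mathcal G)$ with $\mathcal G$ the core-parallel extension of $\mathcal F$ across the collars; pulling $(\mathcal F,\mathcal G)$ back under $\widehat q$ gives competitors upstairs and again reduces to the quantitative effect of a definite-modulus collar. Together with $\SF[f]\le\wt\SF[f]$, the model estimate completes the $\SF[f]<1$ case.
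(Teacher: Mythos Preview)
Your treatment of the case $\SF[f]\ge 1$ is correct and matches the paper (Proposition~\ref{prop:SF-large-cover}). For $\SF[f]<1$ you also correctly reduce to a uniform area estimate: the extremal metric on the cover must put a definite fraction $\theta\ge\delta$ of its mass outside the image of~$\wt R$. You flag this as ``the main obstacle'', and it is; but neither of your proposed resolutions closes it. The cylinder-widening argument appeals to a collar of modulus $\ge\eta$, yet two sentences earlier you note that in the cover the collar modulus is only $\ge\eta/\deg\widehat q$, so there is no uniform gain. The ``ratios are scale-invariant'' heuristic would be fine if the extremal metric upstairs were the pullback of a fixed metric downstairs, but it is not: the entire difficulty (see the paper's Examples~\ref{examp:cover} and~\ref{examp:cover2}) is that the extremal metric for a non-symmetric multi-curve on a cover need not resemble any pullback, and the ratio $\theta$ can genuinely vary. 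Finally, pulling back the optimal foliation pair $(\mathcal F,\mathcal G)$ from downstairs gives a \emph{lower} bound on the upstairs stretch factor, not the upper bound you need.

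The paper fills this gap with a separate result, Theorem~\ref{thm:area-surface}: for an annular conformal embedding $f\co R\hookrightarrow S$ there is $K<1$ with $A_{f^*q}(R)\le K\,A_q(S)$ for \emph{every} $q\in\Quad(\wt S)$ on \emph{every} finite cover. The proof is not a scaling argument. One cuts $S$ by two transverse arc systems into hexagons and once-punctured bigons, and on each piece applies a concentration lemma for $L^1$ holomorphic functions on the disk (Propositions~\ref{prop:disk-area} and~\ref{prop:punct-disk-area}, proved via normal families and weak compactness of measures, or alternatively via subharmonicity and Hadamard's three-circles theorem): if almost all the $q$-mass lies in the subsurface, then almost all of it lies near the cutting arcs---for both arc systems, which is impossible since their neighborhoods are disjoint. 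Uniformity under covers holds because the lifted arcs cut the cover into pieces conformally \emph{identical} to the pieces downstairs, so the same local constants apply piece by piece. Feeding this $K$ into your displayed inequality (exactly as the paper does in its final two lines) finishes the proof.
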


The hard part of Theorem~\ref{thm:sf-cover} is showing that
$\wt\SF[f]$ is strictly less than $1$ when $\SF[f] < 1$.

By Proposition~\ref{prop:sf-dist}, $\wt \SF[f] < 1$ is equivalent to saying
that $\Teich_{\wt R}(\wt S)$ contains a ball of uniform size
around~$\wt S$ for every finite cover of~$f$.

Theorem~\ref{thm:sf-cover} will be used in later work
\cite{Thurston16:RubberBands}
to give a positive characterization of
post-critically finite rational maps among topological branched
self-covers of the sphere. This provides a counterpoint to W. Thurston's
characterization \cite{DH93:ThurstonChar}, which characterizes
rational maps in terms of an obstruction.

\subsection{History}
\label{sec:history}
The maximum of the ratio of extremal lengths has appeared before,
usually in the context of closed surfaces, where it gives
Teichmüller distance, as first proved by Kerckhoff
(see Theorem~\ref{thm:teich-dist} below). For surfaces with
boundary
the behavior is quite different, as the stretch factor can be less
than one.

In the special case when the
target~$S$ is a closed torus, there is very precise information about
when $R$ conformally embeds inside of~$S$
\cite{Shiba87:ModuliCont,Shiba93:SpansLowGenus}. Shiba proves that in
this case
$\Teich_R(S)$ is a disk with
respect to the Teichmüller metric.

There has been earlier work on portions of
Theorem~\ref{thm:strict-emb}. In
particular, Earle and
Marden \cite{EM78:ConfEmbeddings} showed that, with extra
topological restrictions on the embedding $R \hookrightarrow S$,
if $f$ is homotopic to a strict conformal embedding then it is
homotopic to an annular conformal embedding.

It is tempting to look for an analogue of Theorem~\ref{thm:emb} using
hyperbolic length instead of extremal length, given that, by the
Schwarz lemma, hyperbolic length is decreased under conformal
inclusion. However, the results are false for hyperbolic length in
almost all cases
\cite{Masumoto00:HypEmbedding,FB14:ConverseSchwarz}.

These results were first announced in a research report by the last
author \cite{Thurston16:RubberBands}.

\subsection{Organization}
\label{sec:organization}
Section~\ref{sec:setting} reviews background material and specify our
definitions for topological surfaces. Section~\ref{sec:conformal} does
the same for Riemann surfaces and extremal length, as well as giving
elementary properties of the stretch factor. Section~\ref{sec:slit}
proves Theorem~\ref{thm:emb}, largely based on a theorem of
Ioffe. Section~\ref{sec:strict} extends this to prove
Theorem~\ref{thm:strict-emb}. Section~\ref{sec:covers} gives the
further extension to prove Theorem~\ref{thm:sf-cover}. In
Section~\ref{sec:covers}, we also prove Theorem~\ref{thm:area-surface}, an
estimate on areas of subsurfaces with respect to quadratic
differentials; this may be of
independent interest. Section~\ref{sec:challenges} gives some
directions for future research, and in the process gives another way
to get an upper bound on $\wt\SF[f]$.

\subsection{Acknowledgments}
We thank
Matt Bainbridge,
Maxime Fortier Bourque, and
Frederick Gardiner
for many helpful conversations.
Aaron Cohen,
Russell Lodge,
Insung Park, and
Maxime Scott
gave useful comments on earlier drafts.
JK was supported by NSF grant DMS-1352721.
KMP was supported by Simons Foundation Collaboration Grant \#4429407.
DPT was supported by NSF grants DMS-1358638 and DMS-1507244.


\section{Topological Setting}
\label{sec:setting}
\begin{definition}\label{def:surface}
  By a (smooth) \emph{surface}~$S$ we mean a smooth, oriented, compact
  2-manifold
  with boundary, together with a distinguished finite set~$P$ of points
  in~$S$, the \emph{punctures}. The boundary $\partial S$ of~$S$ is
  a finite union of circles. By a slight abuse of terminology, by the
  interior~$S^\circ$ of~$S$ we mean $S \setminus (P \cup \bdy S)$.
  If we want to emphasize that we are talking about the
  compact version of~$S$, we will write $\closure{S}$.

  A surface is \emph{small} if it is the sphere with $0$, $1$,
  or~$2$ punctures or the unit disk with $0$ or~$1$ punctures. These
  are the surfaces that have no non-trivial curves by the definition
  below.

\  By a \emph{topological map} $f \co R \to S$ between surfaces we mean an
  orientation\hyp preserving continuous map from
  $R^\circ$ to~$S^\circ$ that extends to a continuous map from $\closure{R}$ to~$\closure{S}$. In
  particular, the image of a puncture is a puncture or a regular
  point, and embeddings are only required to be one-to-one on $R^\circ$.
  Homotopies are taken within the same space of maps.
\end{definition}

\begin{definition}\label{def:curves}
  A \emph{multi-curve}~$C$ on a surface~$S$ is a smooth 1-manifold with
  boundary $X(C)$ together with an immersion from the interior of $X(C)$
  into~$S^\circ$ that maps $\partial X(C)$ to
  $\partial S$. We do not assume that $X(C)$ is connected; if it
  is, $C$ is said to be \emph{connected} or a
  \emph{curve}. We will mostly be concerned with \emph{simple}
  multi-curves, those for which the immersion is an embedding. An \emph{arc}
  is a curve for which $X(C)$ is an interval, and a \emph{loop} is a
  curve for which $X(C)$ is a circle. A multi-curve is \emph{closed}
  if it has no arc components.

  A (multi-)curve is \emph{trivial} if
  it is contained in a disk or once-punctured disk of~$S$.

  \emph{Equivalence} of multi-curves is the equivalence relation generated
  by
  \begin{enumerate}
  \item homotopy within the space of all maps taking $\partial X(C)$ to
    $\partial S$ (not necessarily immersions),
  \item reparametrization of the 1-manifold $X(C)$ (including
    orientation reversal), and
  \item dropping trivial components.
  \end{enumerate}
  The equivalence class of~$C$ is denoted~$[C]$. The space of simple
  multi-curves on~$S$ up to homotopy is denoted $\Curves^{\pm}(S)$. If
  $\bdy S \ne \emptyset$, then we distinguish two subsets of
  $\Curves^{\pm}(S)$:
  \begin{itemize}
  \item $\Curves^+(S) \subset \Curves^{\pm}(S)$ is the subset of closed
    curves and
  \item $\Curves^-(S) \subset \Curves^{\pm}(S)$ is the subset with no loops
    parallel to a boundary component.
  \end{itemize}

  A \emph{weighted multi-curve} $C = \sum a_iC_i$ is a multi-curve in which each
  connected component is given a positive real coefficient~$a_i$. When
  considering equivalence of weighted multi-curves, we add the further
  relation that two parallel components may be merged and their
  weights added. We write $\Curves_\RR(S)$ or
  $\Curves_\QQ(S)$ for the space of weighted multi-curves with real or
  rational weights, respectively.
\end{definition}

\begin{definition}\label{def:foliation}
  A (positive) \emph{measured foliation} $F$ on a surface $S$ is
  a singular 1-dimensional foliation on $\overline{S}$, tangent
  to~$\bdy S$, with a non-zero transverse
  measure. $F$ is allowed to have $k$-prong singularities, as
  described, for instance, in
  \cite{FLP79:TravauxThurston}, and summarized below.
  \begin{itemize}
  \item At points of $S^\circ$, we allow $k$-prong singularities for $k
    \ge 3$. (If there are only $2$ prongs, it is not a singularity.)
    This is also called a zero of order $k-2$.
  \item At punctures, we allow $k$-prong singularities for $k \ge
    1$. This is also called a zero of order $k-1$.
  \item At points of~$\bdy S$, we allow $k$-prong singularities for
    $k \ge 3$. This is also called a zero of order $k-2$. If we double
    the surface, it becomes a $(2k-2)$-prong singularity.
  \end{itemize}
  We also admit the empty (zero) measured foliation as a degenerate case.
  A \emph{singular leaf} of a measured foliation is a leaf that ends
  at a singularity. A
  \emph{saddle connection} is a
  singular leaf that ends at singularities in both
  directions. If a saddle connection
  connects two distinct singularities, and at least one of the
  singularities is in
  the interior, it is possible to \emph{collapse} it to form a new
  measured foliation.
  \emph{Whitehead equivalence} of measured foliations is
  the equivalence relation generated by homotopy and collapsing saddle
  connections. We denote the Whitehead equivalence class of a
  measured foliation by~$[F]$, and the set of Whitehead equivalence
  classes of measured foliations by $\MF^+(S)$.
\end{definition}

From a multi-curve $C \in \Curves^-(S)$ and a measured foliation $F$
on~$S$, we can form the intersection number
\begin{equation}\label{eq:mf-intersect}
  i([C],[F]) \coloneqq \inf_{C_1 \in [C]} \int_t \abs{F(C_1'(t))}\,dt.
\end{equation}

\begin{proposition}\label{prop:mf-measure}
  The map
  \begin{align*}
    \MF^+(S) &\rightarrow \RR^{\Curves^-(S)}\\
    [F] &\mapsto \bigl(i([C],[F])\bigr)_{[C] \in \Curves^-(S)}
  \end{align*}
  is an injection, with image a finite-dimensional manifold determined
  by its projection onto finitely many factors.
\end{proposition}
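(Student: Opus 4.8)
The plan is to import the train-track model of $\MF$ from the closed case (see \cite{FLP79:TravauxThurston}) and adapt it to surfaces with boundary and punctures. First I would construct a finite collection of train tracks $\tau_1,\dots,\tau_N$ on $S$ — embedded trivalent spines adapted to $\bdy S$, so that branches may terminate on the boundary and the complementary pieces are disks, once-punctured disks, or half-open boundary-parallel annuli — such that every measured foliation on $S$ is Whitehead-equivalent to one \emph{carried} by some $\tau_i$. These arise from the usual spining construction: the union of the singular leaves of $F$ (completed along $\bdy S$) is a finite graph, collapsing the complementary bands produces a train track carrying $F$, and only finitely many combinatorial types occur. For a fixed $\tau = \tau_i$, the transverse measures carried by $\tau$ form a rational polyhedral cone $V(\tau) \subseteq \RR^{\Edge(\tau)}_{\ge 0}$ cut out by the linear switch conditions, and $F \mapsto (w_e(F))_e$ identifies the foliations carried by $\tau$, up to Whitehead equivalence, with $V(\tau)$; in particular each piece is finite-dimensional.

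Next I would verify that intersection numbers are linear in these coordinates. Given $C \in \Curves^-(S)$, isotope it into taut position with respect to $\tau$, so $C$ meets $\tau$ transversally without backtracking and crosses each branch $e$ a definite number $n_e(C)$ of times; then for every $F$ carried by $\tau$ one has $i([C],[F]) = \sum_e n_e(C)\,w_e(F)$, a linear functional on $V(\tau)$. The arc components of $C$ cause no extra trouble because $F$ is tangent to $\bdy S$, and boundary-parallel loops are exactly what $\Curves^-(S)$ omits, consistently with their contributing nothing. The only real input here is the standard fact that the taut position computes the infimum in \eqref{eq:mf-intersect}.

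To finish I would extract coordinates and prove injectivity. On one chart, choose finitely many curves whose crossing vectors, together with the switch equations, span $(\RR^{\Edge(\tau)})^*$; then $F \mapsto (i([C_j],[F]))_j$ is a linear injection on $V(\tau)$ carrying it to a polyhedral cell, and every other intersection number $i([C],[F])$ with $F$ in this chart is a fixed linear function of these finitely many. Let $\mathcal{C}_0 \subseteq \Curves^-(S)$ be the union of such finite families over $\tau_1,\dots,\tau_N$. Then, granting injectivity of $\MF^+(S)\to\RR^{\mathcal{C}_0}$, the image of the full map is the union of the polyhedral cells coming from the $V(\tau_i)$, glued along their (PL) overlaps — hence a finite-dimensional PL manifold — and membership in the image is detected by the $\mathcal{C}_0$-coordinates alone, i.e.\ the image is determined by its projection onto finitely many factors.

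The crux is the injectivity of $\MF^+(S) \to \RR^{\mathcal{C}_0}$ (equivalently, of the original map): a priori two foliations carried by different train tracks might agree on all of $\Curves^-(S)$. I would argue as in FLP: weighted simple multi-curves are dense in $\MF^+(S)$ and the pairing $i(\cdot,\cdot)$ is continuous and symmetric, so $i([F],[C]) = i([G],[C])$ for all $[C]\in\Curves^-(S)$ — which, after restoring boundary-parallel curves (contributing $0$ against foliations tangent to $\bdy S$), involves a dense subset of $\MF^+(S)$ — promotes to $i([F],[H]) = i([G],[H])$ for all $[H]\in\MF^+(S)$; writing $[F]$ as a limit of weighted multi-curves and recovering its transverse measure along a transverse arc as a supremum of such intersection numbers then forces $[F]=[G]$. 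One could also sidestep density by checking directly that the transition maps between the $V(\tau_i)$, induced by common-refinement train tracks, are PL and intertwine the intersection functionals, so that the assembled map is injective by construction.
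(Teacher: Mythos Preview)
Your train-track approach is correct in outline but considerably more elaborate than what the paper does. For surfaces with nonempty boundary the paper simply takes a maximal system $(C_i)_{i=1}^n$ of disjoint arcs in $\Curves^-(S)$; these cut $S$ into disks and once-punctured disks, so the numbers $i([C_i],[F])$ already form \emph{global} coordinates determining $[F]$ up to Whitehead equivalence---no atlas of train tracks, no transition maps, no gluing of polyhedral cones. Your machinery is essentially what one needs for closed surfaces (which the paper explicitly declines to treat), and it has the virtue of handling both cases uniformly; the cost is the chart-patching you describe.

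One genuine issue: your primary injectivity argument invokes density of weighted multi-curves in $\MF^+(S)$ and continuity and symmetry of a pairing $i\colon \MF^+(S)\times\MF^+(S)\to\RR$. In the paper's logical order the topology on $\MF^+(S)$ is \emph{defined} via this proposition, density is proved afterwards, and the foliation-against-foliation pairing is never introduced. So as written that route is circular (or at least imports substantial outside material). Your fallback---checking directly that the PL transitions between the $V(\tau_i)$ intertwine the intersection functionals, so the assembled map is injective by construction---is the honest argument here and is what you should lead with.
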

\begin{proof}[Proof sketch]
  This is standard. If $S$ has non-empty boundary, take a
  maximal set $(C_i)_{i=1}^n$ of non-intersecting arcs in
  $\Curves^-(S)$. Then $(i([C_i],[F]))_{i=1}^n$ determines~$F$ up
  to Whitehead equivalence. If $S$ has no boundary, the construction
  is more involved, and we omit it.
\end{proof}

Proposition~\ref{prop:mf-measure} can be used to define a topology on
$\MF^+(S)$, which we will use.

\begin{proposition}
  The projection map from all measured foliations (not up to
  equivalence, with its natural function topology) to $\MF^+(S)$ is
  continuous.
\end{proposition}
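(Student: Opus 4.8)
The plan is to read off the topology on $\MF^+(S)$ from Proposition~\ref{prop:mf-measure}: the map $[F] \mapsto \bigl(i([C],[F])\bigr)_{[C]}$ is a homeomorphism onto its image for the product topology on $\RR^{\Curves^-(S)}$, so the projection map is continuous as soon as, for every fixed $[C] \in \Curves^-(S)$, the composite $F \mapsto i([C],[F])$ is continuous on the space of measured foliations with the function topology. Fixing $[C]$, a measured foliation $F$, and a sequence $F_n \to F$, I would establish the two inequalities $\limsup_n i([C],[F_n]) \le i([C],[F])$ and $\liminf_n i([C],[F_n]) \ge i([C],[F])$, obtaining the first by an elementary argument and the second from a combinatorial model that in fact delivers full continuity directly.

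The upper bound comes straight from the defining formula~\eqref{eq:mf-intersect}. Given $\eps > 0$, pick a representative $C_1 \in [C]$ that is a smooth submanifold, transverse to $F$, disjoint from the finite singular set of~$F$, and with $\int_t \abs{F(C_1'(t))}\,dt < i([C],[F]) + \eps$; such a $C_1$ exists because a quasi-transverse minimizing representative can be perturbed off the singular set at an arbitrarily small cost. Being disjoint from the singular set and being transverse are open conditions, and in the function topology the singular sets, the leaf fields near $C_1$, and the transverse measures along $C_1$ of $F_n$ all converge to those of $F$; so for large $n$ the same $C_1$ is transverse to $F_n$, disjoint from its singular set, and $\int_t \abs{F_n(C_1'(t))}\,dt \to \int_t \abs{F(C_1'(t))}\,dt$. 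Since $i([C],[F_n]) \le \int_t \abs{F_n(C_1'(t))}\,dt$, this gives $\limsup_n i([C],[F_n]) \le i([C],[F]) + \eps$, and $\eps$ was arbitrary.

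For the lower bound I would pass to a combinatorial model. Fix a filling train track $\tau$ carrying $F$ that is large enough to carry an open neighborhood of $[F]$ in $\MF(S)$, chosen so that its complementary regions absorb the singularities and saddle connections of $F$ and the cores of any annular components of $F$ run along sub-loops of $\tau$. Then $F$ is realized in the fibered neighborhood of $\tau$ with a weight vector $w$, and $i([C],[F]) = \Phi_{[C]}(w)$ for a piecewise-linear function $\Phi_{[C]}$ on the weight polytope of $\tau$ that depends only on $[C]$ and the combinatorics of $\tau$. The key claim is that $F_n \to F$ in the function topology forces, for all large $n$, the foliation $F_n$ to be isotopic to one carried by the \emph{same} $\tau$, with weight vector $w_n \to w$: being carried means every leaf meets the ties transversely, which is stable under the small deformations of the leaf field recorded by the function topology, and each weight is the transverse $F_n$-measure of one of the finitely many ties, which converges. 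Granting the claim, $i([C],[F_n]) = \Phi_{[C]}(w_n) \to \Phi_{[C]}(w) = i([C],[F])$ --- and this argument gives full continuity along the sequence at once, with the elementary upper-bound argument serving as an independent check.

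The step I expect to be the main obstacle is exactly this key claim: converting ``$F_n \to F$ in the function topology'' into ``$F_n$ is eventually carried by a fixed train track with weights converging to those of $F$.'' The delicate points are the behavior near the singular leaves and saddle connections of $F$, where one must verify that $\tau$ still carries $F_n$ once the singularities have moved slightly, and the annular components of closed leaves, whose widths may vary but must be shown to stay bounded away from $0$ for large $n$. If one prefers to avoid train tracks, the lower bound can instead be obtained by choosing near-minimizing quasi-transverse representatives $C_1^{(n)} \in [C]$ for $F_n$, observing that they stay in a compact family of representatives of $[C]$ (using that $i(\cdot,F)$ is proper on the subsurface filled by the non-annular part of $F$, together with $F_n \to F$ and a fixed normalization on the complementary and annular pieces), extracting a limit $C_1^\infty \in [C]$, and concluding $i([C],[F]) \le \int_t \abs{F((C_1^\infty)'(t))}\,dt \le \liminf_n i([C],[F_n])$ by lower semicontinuity of $F$-length under convergence of the curves and the transverse measures.
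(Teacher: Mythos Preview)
Your proposal is correct, but the route differs from the paper's. The paper works entirely with quasi-transverse curve representatives: given $F_0$ and $[C]$, it takes a quasi-transverse representative $C_0 \in [C]$ (which automatically realizes $i([C],[F_0])$), and then argues that for any $F_1$ close to $F_0$ one can perturb $C_0$ to a nearby $C_1$ that is quasi-transverse to~$F_1$, so $i([C],[F_1]) = i(C_1,F_1)$ is close to $i(C_0,F_0)$. This gives both inequalities at once and avoids train tracks entirely; the work is concentrated in the local analysis near the singularities of~$F_0$ needed to produce~$C_1$. Your approach instead separates the two inequalities, getting $\limsup$ by the same test-curve idea and $\liminf$ either by passing to a combinatorial model (your ``key claim'' that carrying by a fixed train track is stable under function-topology perturbation, with weights varying continuously) or by a compactness argument on near-optimal representatives. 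Both of your alternatives are viable; the train-track route has the advantage that once the key claim is established the conclusion is immediate from piecewise linearity of $\Phi_{[C]}$, while the paper's route and your compactness alternative stay closer to the definition~\eqref{eq:mf-intersect}. The delicate point you flag---behavior near singular leaves and saddle connections---is exactly the ``analysis of the behavior near singularities'' the paper invokes, just transported from curves to foliations.
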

\begin{proof}[Proof sketch]
  For any non-zero measured foliation~$F_0$ and $[C] \in \Curves^-(S)$, there
  is a quasi-transverse representative $C_0\in [C]$, which
  automatically satisfies $i(C_0,F_0) = i([C],[F_0])$. If $F_1$ is any
  measured foliation close to~$F_0$, then an analysis of the behavior
  near singularities shows that there is a representative $C_1\in [C]$
  so that $C_1$ is close to $C_0$ and $C_1$ is quasi-transverse with
  respect to $F_1$. Then $i([C],[F_1]) = i(C_1, F_1)$ and $i(C_1, F_1$
  is close to $i(C_0, F_0)$.
\end{proof}

We can also use Proposition~\ref{prop:mf-measure} to define a map from
$\Curves^+(S)$ to $\MF^+(S)$, sending $[C] \in \Curves^+(S)$ to the unique
measured foliation $[F_C]\in\MF^+(S)$ so that $i([C'],[C]) =
i([C'],[F_C])$ for all $C' \in \Curves^-(S)$. This map is an embedding
on equivalence classes of weighted simple multi-curves.

\begin{definition}
  A \emph{train track}~$T$ on a surface~$S$ is a graph $G$ embedded
  in~$S$, so that at each vertex of~$G$ (called a \emph{switch}) the
  incident edges are partitioned into two non-empty subsets that are
  non-crossing in the cyclic order on the incident vertices. In
  drawings, the elements of each subset are drawn tangent to each
  other.

  The \emph{complementary regions} of a train track are naturally
  surfaces with cusps on the boundary.
  A \emph{taut} train track is a train track with no complementary
  components that are disks with no cusps or one cusp, or once-punctured
  disks with no cusps.
\end{definition}

\begin{remark}
  Many authors (e.g., Penner and Harer \cite{PH92:CombTrainTracks} and
  Mosher \cite{Mosher03:Arational})
  include our notion of tautness in the definition of a train
  track, often in a stronger form
  forbidding bigons (disks with two cusps) and once-punctured monogons as well.
\end{remark}

\begin{definition}
  The space of positive \emph{transverse measures} or \emph{weights} on a train
  track~$T$ on a surface~$S$ is the space
  $\Meas(T)$ of assignments of positive numbers (``widths'') to edges of the
  train track so that, at each vertex, the sum of weights on the two
  sides of the vertex are equal. If $\Meas(T)$ is non-empty, then
  $T$ is said to be \emph{recurrent}. We have subspaces
  $\Meas_{\QQ}(T)$ and $\Meas_{\ZZ}(T)$ for transverse measures on~$T$
  with rational or integral values, respectively. For any train
  track, there
  is a natural map $\Meas_{\ZZ}(T) \to \Curves^+(S)$, where we replace
  an edge of~$T$ of weight~$k$ by $k$ parallel strands, joining the
  strands in the natural way at the switches.
\end{definition}

\begin{lemma}\label{lem:tt-continuous}
  Let $T$ be a recurrent taut train track on~$S$. Then there is a
  natural continuous map $\Meas(T) \to
  \MF^+(S)$ extending $\Meas_{\ZZ}(T) \to \Curves^+(S)$.
\end{lemma}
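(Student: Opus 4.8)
The plan is to build the map $\Meas(T) \to \MF^+(S)$ by "fattening" the train track $T$ into a foliated neighborhood, so that a weight vector $w \in \Meas(T)$ gives a Euclidean rectangle of width $w(e)$ for each edge $e$, glued along the switches in a way dictated by the switch (consistency) condition; the foliation is by horizontal segments within each rectangle, and the transverse measure is Lebesgue measure in the width direction. The switch condition on $w$ is precisely what guarantees the widths match up so that the foliation extends continuously across switches. First I would fix, once and for all, a smooth foliated neighborhood $N(T) \subset S$ of $T$ decomposed into rectangles ("branches") and partial polygons ("switches"), following the standard construction (e.g. Penner--Harer \cite{PH92:CombTrainTracks}); the combinatorial data of this decomposition does not depend on $w$. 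For each $w$, this produces a measured foliation $F_w$ on $N(T)$. To extend $F_w$ to all of $\overline S$, I would use the tautness hypothesis: each complementary region of $T$ is a surface with cusped boundary that is not a smooth disk, a once-cusped disk, or a once-punctured smooth disk, so it carries a measured foliation tangent to its boundary with the prescribed cusps at the corners — this is exactly the combinatorial condition under which the Euler--Poincaré/index count permits filling in the complementary region with a foliation having only the allowed singularity types. One must be slightly careful that the choice of filling can be made to depend continuously on the boundary transverse measures, but since the complementary-region topology is fixed, one can fix a filling model for each region and scale.

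Next I would check that $[F_w] \in \MF^+(S)$ is well-defined independent of the auxiliary choices (the fattening $N(T)$ and the fillings), which holds because any two choices differ by isotopy and Whitehead moves (collapsing saddle connections introduced by the filling), so they represent the same class in $\MF^+(S)$. Compatibility with $\Meas_\ZZ(T) \to \Curves^+(S)$ is immediate: when $w$ is integral, replacing each branch by $w(e)$ parallel strands and joining at switches gives a closed multi-curve $C_w$, and $F_w$ is by construction the measured foliation obtained by taking $C_w$ with transverse counting measure (fattened), whose class in $\MF(S)$ is the image of $[C_w]$ under the embedding $\Curves^+(S) \hookrightarrow \MF(S)$ described before Lemma~\ref{lem:tt-continuous}.

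For continuity, I would use Proposition~\ref{prop:mf-measure}: it suffices to show that for each $[C'] \in \Curves^-(S)$ the function $w \mapsto i([C'], [F_w])$ is continuous on $\Meas(T)$. Fix $[C']$ and choose a representative $C'$ that is \emph{carried transversally} by the fattened picture — i.e. $C'$ is transverse to the horizontal foliation of $N(T)$ and meets the complementary-region fillings quasi-transversally — which is possible by putting $C'$ in efficient position with respect to $T$. Then $i([C'], [F_w])$ is computed by integrating $|F_w|$ along $C'$, and since $C'$ crosses branch $e$ a fixed number $n_e([C'])$ of times (independent of $w$ in a neighborhood) and contributes nothing in the complementary regions (the filling foliations are tangent to the region boundaries, and a quasi-transverse arc can be taken to run along leaves there), we get $i([C'], [F_w]) = \sum_e n_e([C'])\, w(e)$, which is manifestly continuous — indeed linear — in $w$. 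I expect the main obstacle to be the honest verification that a fixed efficient representative $C'$ realizes the intersection number $i([C'],[F_w])$ \emph{simultaneously} for all nearby $w$ (so that no "surgery" of $C'$ is forced as $w$ varies), and relatedly that the complementary-region contribution genuinely vanishes; this is where tautness is essential, since a bigon or punctured-monogon complementary region would force $C'$ to cross the filling and introduce a $w$-independent but geometrically awkward term. Handling this cleanly is the technical heart of the argument; everything else is bookkeeping with standard train-track technology.
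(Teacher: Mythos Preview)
Your construction is essentially the paper's, phrased differently: where you ``fill'' each complementary region with a model foliation, the paper picks a spine $\Gamma$ for $\overline{S}\setminus N(T)$ (with $1$-valent vertices exactly at cusps and punctures---this is where tautness enters) and then chooses a single homeomorphism $\phi\colon N(T)\to \overline{S}\setminus\Gamma$, fixed once and for all, and defines $T(w)\coloneqq [\phi_*F_N(w)]$. The resulting foliation on the complementary regions is the same as yours (boundary-parallel arcs crashing into the spine), but packaging it as a fixed $\phi$ pays off immediately in the continuity step.

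That is the real divergence. You argue continuity by computing $i([C'],[F_w])=\sum_e n_e([C'])\,w(e)$ for a test curve $C'$ in efficient position; you correctly flag as the ``technical heart'' the verification that a fixed efficient representative of $C'$ realizes the infimum in the definition of $i([C'],[F_w])$ simultaneously for all $w$. That verification is genuine work (and, since the paper's tautness allows bigon complementary regions, you cannot invoke the strongest off-the-shelf results without care). The paper bypasses this entirely: since $\phi$ is independent of $w$, the measured foliation $\phi_*F_N(w)$ itself varies continuously in $w$ in the function topology on foliations, and the paper has already recorded (in the Proposition immediately following Proposition~\ref{prop:mf-measure}) that the projection from foliations to $\MF^+(S)$ is continuous. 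So continuity is a one-liner. Your route would get there, but the paper's is shorter and avoids exactly the obstacle you anticipated.
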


We will denote the map $\Meas(T) \to \MF^+(S)$ by $w \mapsto T(w)$. If
$F = T(w)$ for some~$w$, we say that $T$ \emph{carries}~$F$.

(For convenience in the proof we are assuming the weights on~$T$ are
strictly positive, but in fact the lemma extends to non-negative
weights.)

\begin{proof}
  Pick a small regular neighborhood $N(T)$ of~$T$, arranged so that
  $S \setminus N(T)$ has a cusp near each corner where $T$ has a
  cusp, as illustrated in Figure~\ref{fig:sew-train-track}. A weight
  $w \in \Meas(T)$ gives a canonical measured
  foliation $F_N(w)$ on $N(T)$, where an arc cutting across $N(T)$ transverse
  to a edge~$e$ has measure $w(e)$.

  Next pick a graph $\Gamma \subset \overline{S} \setminus N(T)$ so that
  \begin{itemize}
  \item $\Gamma$ contains $\partial S$,
  \item $\Gamma$ has a 1-valent vertex at each cusp of $S \setminus
    N(T)$ and at each puncture,
  \item all other vertices of~$\Gamma$ have valence $2$ or more, and
  \item $\Gamma$ is a spine for $S \setminus N(T)$, i.e., $S \setminus
    N(T)$ deformation retracts onto~$\Gamma$.
  \end{itemize}
  (The condition that $T$ be taut guarantees that we can find such a
  $\Gamma$). Since $\Gamma$ is a spine, there is a deformation
  retraction $\overline{S} \setminus N(T) \to \Gamma$. We can use this to
  construct a homeomorphism $\phi \co N(T) \to \overline{S} \setminus
  \Gamma$ that is the identity on $T \subset N(T)$ and extends to a
  continuous map $\partial N(T) \to \Gamma$ without backtracking. Then
  $[\phi(F_N(w))]$ is the desired measured foliation $T(w)$.

  \begin{figure}
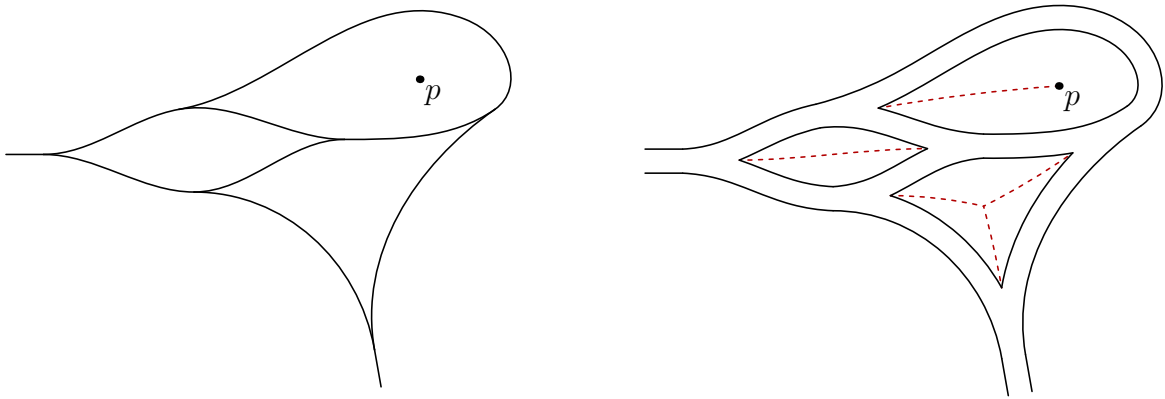

    \[\mfigb{tt-0}\qquad\qquad\mfigb{tt-2}\]
    \caption{Left: A portion of a taut train track~$T$.  The
      point~$p$ is a puncture. Right: A
      neighborhood $N(T)$ of~$T$, together with a spine~$\Gamma$
      for $\overline{S} \setminus T$, shown dashed in red.}
    \label{fig:sew-train-track}
  \end{figure}

  As a measured foliation (not up to Whitehead equivalence),
  $\phi(F_N(w))$ depends continuously on~$w$ by construction. The
  quotient map to the Whitehead equivalence class is continuous.
\end{proof}

In Lemma~\ref{lem:tt-continuous}, if a complementary
region of~$T$ is a bigon or once-punctured monogon, the corresponding spine
is necessarily an interval. Lemma~\ref{lem:tt-continuous} is false
without the assumption that $T$ is taut; see
Example~\ref{examp:erase-hole}.

\begin{lemma}\label{lem:MF-carried}
  Every measured foliation~$F$ is carried by a taut train
  track~$T$. Furthermore, $T$ can be chosen so that if $F$ has
  $k$~zeros on a
  boundary component, the
  corresponding complementary component of~$T$ has at least
  $k$~cusps.
\end{lemma}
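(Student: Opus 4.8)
The plan is to pass from $F$ to a measured train track by cutting a foliated representative into finitely many foliated rectangles and collapsing the transverse direction, as in the standard dictionary between measured foliations and measured train tracks (see \cite{FLP79:TravauxThurston} or \cite[\S1.7--1.8]{PH92:CombTrainTracks}); the extra work here is only in controlling what happens near $\partial S$.

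First I would fix a foliated representative of $F$ together with a finite family $\Sigma$ of disjoint arcs and loops, each transverse to $F$ with endpoints at zeros of $F$ or on $\partial S$, chosen so that every component of $\overline{S}$ cut along $\Sigma$ is a rectangle $[0,a]\times[0,b]$ whose horizontal sides $[0,a]\times\{y\}$ are subarcs of leaves and whose vertical sides $\{x\}\times\{0,b\}$ lie in $\Sigma$; in particular every zero of $F$ becomes a corner of some rectangle, and near each boundary component $\beta$ I arrange the corners of the adjacent rectangles to be exactly the zeros of $F$ on $\beta$. Producing such a $\Sigma$ is the one genuinely delicate step: when $F$ has dense leaves one cannot simply cut along singular leaves, so one must enlarge a transverse arc system by hand until all complementary pieces are rectangles. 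This done, let $T$ be obtained by collapsing each rectangle $[0,a]\times[0,b]$ onto $[0,a]\times\{0\}$; equivalently, realize $T$ inside $\overline{S}$ as the union of the midleaves $[0,a]\times\{b/2\}$, rerouted through a small neighborhood of each collapsed component of $\Sigma$. The switch condition is then automatic: along a component $\sigma$ of $\Sigma$ the entering leaves occur monotonically and on the side of $\overline{S}$ opposite the leaving leaves, so after collapse the incoming branches are mutually tangent on one side of the switch and the outgoing branches on the other, and the widths on the two sides both equal the $F$-measure of $\sigma$. Taking $w$ to record these widths, I claim $F=T(w)$: with $N(T)$ the slightly shrunk rectangles, $F_N(w)$ coincides with $F|_{N(T)}$, while $\overline{S}\setminus N(T)$ is a union of neighborhoods of the singular leaves of $F$ that deformation-retract onto a spine as in the construction preceding Lemma~\ref{lem:tt-continuous}; comparing intersection numbers with multi-curves in $\Curves^-(S)$ — each being the total $w$-weight of a minimal-position representative crossing $T$ — and invoking Proposition~\ref{prop:mf-measure} gives $T(w)=[F]$.

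Finally I would read off tautness and the cusp count directly from the prong data of $F$. A component of $\Sigma$ met away from the zeros collapses to a switch without creating any complementary region or cusp, so the complementary regions of $T$ are exactly the collapsed neighborhoods of the singular set, and their cusps correspond to the inward-pointing prongs of the zeros. Since measured foliations have no interior $1$- or $2$-prong zeros, no complementary region is a disk with zero or one cusp; since every puncture carries at least a $1$-prong, no complementary region is a cusp-free once-punctured disk — this is precisely the definition of taut. For the ``furthermore'', at a boundary zero of $F$ (a $j$-prong with $j\ge 3$) two prongs run along $\partial S$ and at least $j-2\ge 1$ point into $S^\circ$, so each zero on a boundary component $\beta$ contributes at least one cusp (more if it has higher order) to the complementary region containing $\beta$; summing over the $k$ zeros on $\beta$ shows that region has at least $k$ cusps. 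The main obstacle throughout is the construction of the rectangular decomposition when $F$ has dense leaves; once that is available, the remaining verifications are bookkeeping with prongs and cusps.
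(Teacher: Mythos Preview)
Your approach is essentially the paper's: pick a transverse system, obtain a band/rectangle decomposition, collapse to a train track, and read off tautness and the boundary cusp count from the prong structure of~$F$. The one difference worth flagging is in how the transversals are chosen, and it bears directly on the step you call delicate.

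You ask for a family~$\Sigma$ of transverse arcs and loops with endpoints at zeros of~$F$ or on~$\partial S$, arranged so that the complement is a genuine union of rectangles and so that the corners along each boundary component~$\beta$ are exactly the zeros on~$\beta$. As you note, producing such a~$\Sigma$ when $F$ has dense leaves takes real work. The paper avoids this entirely: it simply chooses finitely many short transverse intervals~$I_j$ (no constraint on their endpoints) that together meet every leaf, and declares the branches of~$T$ to be the finitely many parallel classes of \emph{regular} leaf segments---those running from the interior of some~$I_j$ to the interior of some~$I_{j'}$. The rectangular structure then comes for free from the first-return map to~$\bigcup I_j$, and the spine of each complementary region is the union of the finitely many \emph{non-regular} segments (those meeting a singularity or an endpoint of an~$I_j$). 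Tautness follows because this spine has no valence-$0$ vertices and its valence-$1$ vertices occur only at cusps or punctures; the boundary cusp count follows because each zero on~$\beta$ launches at least one inward non-regular segment, hence at least one cusp. So the paper's choice of transversals buys exactly the simplification you were after: no global rectangular decomposition needs to be engineered, and the case of dense leaves requires no special handling.
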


Notice that the number of zeros on a boundary component is not
invariant under Whitehead equivalence.

\begin{proof}
  The techniques here are standard; see, e.g., \cite[Proposition
  3.6.1]{Mosher03:Arational}, or \cite[Corollary
  1.7.6]{PH92:CombTrainTracks} for a different approach.
  Since the definitions we use are slightly
  different, we sketch the argument.

  Pick a set of intervals $I_j$ on~$S$ that are transverse
  to~$F$ and cut every leaf of~$F$. These intervals will become
  the switches of the train track. Let $I = \bigcup_j I_j$.

  Divide the leaves of~$F$ into \emph{segments} between singularities
  of~$F$ and
  intersections with~$I$. A \emph{regular} segment is one that
  intersects $I$ in interior points on both ends. There are only a
  finite number of non-regular segments (since the
  number of singularities of~$F$ and ends of~$I$ is finite), while for
  any regular
  segment, nearby segments are isotopic relative to~$I$. There are
  thus a finite number of classes of parallel regular segments.

  Now construct a train track~$T$ by taking the union of~$I$ and one
  element of each class of parallel regular segments, and replacing
  each interval $I_j$ with a single vertex~$v_j$, joined to the
  same regular segments by connecting arcs. At each
  switch, the incident edges are divided according to the sides of the
  corresponding~$I_j$.

  Let $\Gamma$ be the union of the non-regular segments. The components of
  the complement of~$T$ correspond to the connected components of~$\Gamma$,
  which is a graph with vertices of valence~$1$ at cusps of~$T$ and
  possibly at punctures of~$S$, and all other vertices of valence $\ge
  2$. (That is, $\Gamma$ is a spine as in the proof of
  Lemma~\ref{lem:tt-continuous}.)  It follows that $T$ is taut.
  $T$~carries $F$ with weights
  equal to the width of the families of parallel segments. If $F$ has
  $k$~zeros on a boundary component, then $T$ has at least $k$~cusps
  by construction.
\end{proof}

\begin{proposition}\label{prop:curves-dense}
  $\Curves^+_\QQ(S)$ is dense in $\MF^+(S)$.
\end{proposition}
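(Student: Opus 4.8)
The plan is to exploit the two structural results just established: Lemma~\ref{lem:MF-carried}, which says every measured foliation~$F$ is carried by a taut train track~$T$, and Lemma~\ref{lem:tt-continuous}, which gives a continuous map $\Meas(T) \to \MF^+(S)$ extending the integral map $\Meas_\ZZ(T) \to \Curves^+(S)$. Given $[F] \in \MF^+(S)$, choose a taut train track~$T$ carrying it, say $F = T(w)$ with $w \in \Meas(T)$. The weight vector~$w$ lives in the rational polyhedral cone $\Meas(T) \subset \RR^{\Edge(T)}_{>0}$ cut out by the switch conditions (which are linear equations with integer coefficients). Since this cone is rational, its rational points are dense in it; hence we can pick a sequence $w_n \in \Meas_\QQ(T)$ with $w_n \to w$. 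Clearing denominators, each $w_n$ is a positive rational multiple of an integral weight, so $T(w_n)$ is (the class of) a weighted rational multi-curve, i.e.\ an element of $\Curves^+_\QQ(S)$. By continuity of $w \mapsto T(w)$ (Lemma~\ref{lem:tt-continuous}), $T(w_n) \to T(w) = [F]$ in $\MF^+(S)$. This shows every point of $\MF^+(S)$ is a limit of points of $\Curves^+_\QQ(S)$, which is the claim.

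The main point requiring care is the continuity and the domain of the carrying map. Lemma~\ref{lem:tt-continuous} as stated assumes strictly positive weights, and the parenthetical remark notes it extends to non-negative weights; since we can arrange $w$ and all $w_n$ to be strictly positive (the cone $\Meas(T)$ is open in its linear span, and rational points remain dense in it), we stay safely inside the hypotheses. One should also note that $\Meas(T) \ne \emptyset$ for the~$T$ produced by Lemma~\ref{lem:MF-carried}, since it carries~$F$ with the widths of the parallel families of segments as weights; so $T$ is recurrent and Lemma~\ref{lem:tt-continuous} applies. A minor subtlety is that a rational weight vector $w_n$ scaled to be integral maps to a genuine simple multi-curve only if the resulting strands can be joined without forcing intersections, but this is exactly the content of the natural map $\Meas_\ZZ(T) \to \Curves^+(S)$ already invoked in the setup (the strands are joined ``in the natural way at the switches''), and the image is a weighted \emph{simple} multi-curve because $T$ is embedded; rescaling by a positive rational keeps us in $\Curves^+_\QQ(S)$.

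I expect the only genuine obstacle to be purely expository: making sure the reader accepts that $\Meas_\QQ(T)$ is dense in $\Meas(T)$. This is immediate because $\Meas(T)$ is the set of positive solutions to a finite linear system with integer (indeed $\{0,\pm 1\}$) coefficients, so it is a relatively open rational polyhedral cone, and rational points are dense in any such set. No deep input is needed beyond the two preceding lemmas. The whole argument is therefore short: pick a carrying taut train track, approximate the weight vector by rational weights, invoke continuity.
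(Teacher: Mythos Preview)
Your proposal is correct and follows essentially the same approach as the paper: carry $F$ by a taut train track via Lemma~\ref{lem:MF-carried}, approximate the weight vector by rational weights, clear denominators, and invoke the continuity from Lemma~\ref{lem:tt-continuous}. The paper's proof is terser but structurally identical; your additional remarks on why $\Meas_\QQ(T)$ is dense in $\Meas(T)$ and why the resulting multi-curves are simple are helpful expository elaborations rather than genuine differences in strategy.
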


\begin{proof}
  For a given measured foliation~$F$, we
  will produce a
  sequence of weighted multi-curves approximating $[F] \in \MF^+(S)$. By
  Lemma~\ref{lem:MF-carried}, $F = T(w)$ for a taut train track~$T$
  and weight $w \in \Meas(T)$. Pick a sequence of
  rational weights $w_n \in \Meas_{\QQ}(T)$ approximating~$w$, and
  clear denominators to write $w_n = \lambda_n w_n'$ where $w_n' \in
  \Meas_{\ZZ}(T)$. Then $w_n'(T)/\lambda_n$ is a weighted multi-curve
  approximating~$F$.
\end{proof}

\begin{remark}\label{rem:connected}
  On a connected surface~$S$ with no boundary,
  Proposition~\ref{prop:curves-dense} can be strengthened to say that
  simple curves are projectively dense in measured
  foliations, as well
  \cite{Kerckhoff80:AsympTeich,FLP79:TravauxThurston}. This
  strengthening is false for surfaces with boundary. For
  instance, a pair of pants has only three distinct non-trivial
  simple curves, but a 3-dimensional space of measured foliations.
\end{remark}


\section{Conformal Setting\label{sec:conformal}}

\subsection{Riemann surfaces\label{sec:riemann}}

\begin{definition}
  A \emph{Riemann surface} (with boundary) is a smooth
  surface~$S$, as in
  Definition~\ref{def:surface}, together with
  a complex structure on~$\oS$, i.e., a fiberwise linear map $J \co
  T\oS \to T\oS$ with $J^2 = -\textrm{id}$.
\end{definition}

\begin{convention}
  For us, a Riemann surface need not be connected. We only consider
  surfaces of finite topological type.
\end{convention}

Since the complex structure is on $\oS$, not just on~$S$, the complex
structure on $S^\circ$ necessarily has a removable singularity near
every puncture.

\begin{definition}
  A (holomorphic) \emph{quadratic differential} $q$ on a Riemann
  surface~$S$ is a holomorphic
  section of the square of the holomorphic cotangent bundle
  of~$S^\circ$. If $z$ is a local coordinate on
  $S^\circ$, we can
  write $q = \phi(z)\,(dz)^2$ where $\phi(z)$ is holomorphic.

  Naturally associated with a quadratic differential we have several objects:
  \begin{itemize}
  \item Local coordinates given by integrating a branch of $\sqrt{q}$
    away from the zeros of~$q$. The transition maps are translations
    or half-turns followed by translations, giving $S$ the structure
    of a half-translation surface.
  \item A horizontal measured foliation $\Fh(q) = \abs{\Im \sqrt{q}}$.
    The tangent vectors
    to the foliation are those vectors $v\in TS$ with $q(v) \ge 0$, and the
    transverse length of a multi-curve~$C$ is
    \[
    \Fh(q)(C) = \int_{t} \bigl\lvert\Im \sqrt{q(C'(t))}\bigr\rvert\,dt,
    \]
    i.e., the total variation of the $y$ coordinate in the
    half-translation coordinates.
  \item Similarly, a vertical measured foliation $\Fv(q) = \abs{\Re \sqrt{q}}$.
  \item A locally Euclidean metric $\abs{q}$ on~$S^\circ$, possibly
    with cone singularities of cone angle $k\pi$ with $k \ge 3$.
    The
    length of a multi-curve~$C$ with respect to $\abs{q}$ is
    \[
    \ell(C) = \int_t \sqrt{\abs{q(C'(t))}}\,dt.
    \]
  \item An area measure $A_q$ on~$S$, the volume measure of $\abs{q}$.
  \end{itemize}

  The vector space of finite-area quadratic differentials on~$S$ that
  extend analytically to $\bdy S$ (but not necessarily to the punctures)
  is denoted $\Quad(S)$. The finite area constraint implies that at a
  puncture of~$S$, every $q\in\Quad(S)$ has at most a simple pole.
  That is, if $z$ is a local coordinate on $\oS$ with a puncture at
  $z=0$, we can locally write $q = \phi(z)/z\,(dz)^2$ where $\phi(z)$
  is holomorphic. 

If $S$ has non-empty boundary, then $\Quad(S)$ is
  infinite\hyp dimensional. There is a finite\hyp dimensional subspace $\Quad^\RR(S)$,
  consisting of those quadratic differentials that are real on vectors
  tangent to $\partial S$. There is a convex cone $\Quad^+(S) \subset
  \Quad^\RR(S)$ consisting of those quadratic differentials that are
  non-negative on $\partial S$. For non-zero $q$ in $\Quad^+(S)$, we have
  $[\Fh(q)] \in \MF^+(S)$.
\end{definition}

\subsection{Extremal length}
\label{sec:ext-length}
\begin{definition}\label{def:el-curves}
  For $C$ a multi-curve on a Riemann surface~$S$, pick a Riemannian
  metric~$g$ in the distinguished conformal class. Then the \emph{length}
  $\ell_g[C]$ is the minimum Riemannian
  length with respect to~$g$ of any rectifiable representative of~$[C]$. The
  \emph{extremal length} of $C$ is
  \begin{equation}\label{eq:el-sup}
  \EL_S[C] \coloneqq \sup_\rho \frac{\ell_{\rho g}[C]^2}{A_{\rho g}(S)},
  \end{equation}
  where the supremum runs over all Borel-measurable
  conformal
  scale factors $\rho \co S \to \RR_{\ge 0}$ of finite, positive
  area. (The scaled quantity $\rho g$ may give a pseudo-metric rather
  than a metric, as, e.g., $\rho$ can be~$0$ on an open subset
  of~$S$. But we can still define length
  and area in a natural way.)  The definition makes it clear that
  extremal length does
  not depend on the metric $g$ within its conformal equivalence class,
  so we may talk about extremal length on~$S$ without reference to~$g$.

  When the Riemann surface is clear from context, we suppress it
  from the notation.

  More generally, if $C = \sum_i a_i C_i$ is a weighted multi-curve, then its
  length is $\ell_g[C] = \sum_i a_i \ell_g[C_i]$, i.e., the
  corresponding weighted linear
  combination of lengths of curves, and its extremal length is still defined by
  Equation~\eqref{eq:el-sup}.
\end{definition}

We need multi-curves in Definition~\ref{def:el-curves}, as the main
theorems of this paper
are false if restricted to curves rather than multi-curves; see
Remark~\ref{rem:connected}.

We will be interested in simple multi-curves~$C$.
We must check that extremal length is well-defined on equivalence
classes of simple multi-curves. Invariance under homotopy and
reparametrization is obvious. Trivial components of a multi-curve~$C$
have no effect on $\ell_{\rho g}[C]$ since $\rho g$ has finite
area, so also have no effect on extremal length. Finally, let $C_0$ be
a simple
multi-curve with parallel components, and let $C_1$ be the weighted
multi-curve with integer weights obtained by merging parallel
components and taking the weight to be the number of merged
components. Then it is easy to see from the definitions that $\EL[C_0]
= \EL[C_1]$.

Furthermore, $\EL$ scales
quadratically: $\EL[kC] = k^2\EL[C]$. 

\begin{lemma}\label{lem:EL-small}
  For any non-trivial multi-curve~$C$ on a Riemann surface~$S$,
  $\EL[C] > 0$.  In particular, if $S$ is not small, there
  is a curve with non-zero extremal length.
\end{lemma}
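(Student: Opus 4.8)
The plan is to exhibit a single conformal metric for which the ratio defining extremal length is already positive. Fix a smooth Riemannian metric $g$ on the compact surface $\closure{S}$ lying in the conformal class of the given complex structure; then $A_g(S) = A_g(\closure{S})$ is finite and positive, and taking the scale factor $\rho \equiv 1$ in Equation~\eqref{eq:el-sup} gives $\EL_S[C] \ge \ell_g[C]^2 / A_g(S)$. So it suffices to show $\ell_g[C] > 0$. Writing $C = \sum_i a_i C_i$ with each $a_i > 0$, we have $\ell_g[C] = \sum_i a_i \ell_g[C_i]$, and a non-trivial multi-curve has at least one non-trivial component; hence it is enough to prove that every non-trivial \emph{connected} curve $D$ (a loop or an arc) has $\ell_g[D] > 0$.

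This last statement is the heart of the matter, and it is the usual ``short loops are inessential'' compactness argument, run in contrapositive form. Suppose $\ell_g[D] = 0$, so there are rectifiable representatives $D_n \in [D]$ whose $g$-length tends to $0$. Cover $\closure{S}$ by finitely many coordinate charts, each bi-Lipschitz to a Euclidean disk (around interior points), a punctured Euclidean disk (around punctures), or a Euclidean half-disk (around boundary points). There is then an $\epsilon_0 > 0$ such that any rectifiable curve of $g$-length $< \epsilon_0$, with interior mapped to $S^\circ$ and endpoints (if any) on $\partial S$, has image inside a single such chart --- here one uses that distinct components of $\partial S$ are a positive distance apart and that a sufficiently small chart contains at most one puncture. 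For $n$ large this puts $D_n$ inside an embedded disk or once-punctured disk of $S$: a loop hugging $\partial S$ still avoids $\partial S$, so it sits in an honest disk; an arc lying in a half-disk chart is homotopic, rel endpoints sliding on $\partial S$, into the flat boundary edge and then to a point. In every case $[D_n] = [D]$ is trivial in the sense of Definition~\ref{def:curves}, contradicting the hypothesis. Hence $\ell_g[D] \ge \epsilon_0 > 0$, and so $\EL_S[C] > 0$. (An alternative to this compactness step is to double $S$ across $\partial S$ and uniformize, giving $S$ a complete finite-area hyperbolic metric with geodesic boundary in which every non-trivial class has a geodesic representative of positive length.)

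For the final assertion: if $S$ is not small, then by the classification of finite-type surfaces it carries a non-trivial simple closed curve --- a non-separating loop when the genus is positive, the core of an annular component, or a loop enclosing exactly two of the punctures and boundary circles otherwise --- which is precisely the content of the assertion that the small surfaces are exactly those with no non-trivial curves. Applying the first part to that curve produces a curve of non-zero extremal length.

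The main obstacle is essentially presentational rather than mathematical: the only step needing care is the reduction from ``$\ell_g[D] = 0$'' to ``$[D]$ trivial,'' where one must handle arcs (whose endpoints are free to move along $\partial S$) and curves hugging $\partial S$ or surrounding a puncture, and verify that ``contained in a small metric ball'' genuinely translates into ``contained in a disk or once-punctured disk of $S$'' in the precise sense of Definition~\ref{def:curves}.
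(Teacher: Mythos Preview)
Your proof is correct and follows essentially the same approach as the paper's: pick a finite-area metric~$g$ in the conformal class and observe that a non-trivial component has $\ell_g > 0$, whence $\EL[C] > 0$ by taking $\rho \equiv 1$ in the supremum. The paper simply asserts $\ell_g[C] > 0$ for non-trivial~$C$ without further comment, whereas you spell out the short-curves-are-trivial compactness argument; that elaboration is fine but not strictly necessary here.
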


\begin{proof}
  Take any finite-area Riemannian metric~$g$ on~$S$ in the given
  conformal
  class. Then, since $C$ has at least one non-trivial component,
  $\ell_g[C] > 0$, so $\EL[C] > 0$.
\end{proof}

We next give some
other interpretations of extremal length for simple multi-curves.
First, recall that for a conformal annulus
\[
A = \bigl([0,\ell] \times [0,w]\bigr)/\bigl((0,x) \sim (\ell,x)\bigr),
\]
its \emph{modulus} $\Mod(A)$ is $w/\ell$. Define the \emph{extremal
  length} of $A$ to be $\EL(A) \coloneqq 1/\Mod(A) = \ell/w$.
Then we can see $\EL[C]$ for a simple multi-curve~$C$ as finding the fattest
set of conformal annuli around~$C$, in the sense that we minimize
total extremal length, as follows.

\begin{proposition}\label{prop:el-total-el}
Let $C = \bigcup_{i=1}^k C_i$ be a simple closed multi-curve on a Riemann
surface~$S$ with
components~$C_i$.
For $i=1,\dots,k$, let
$A_i$ be a (topological) annulus, and let $A = \bigcup_{i=1}^k A_i$.
Then
\begin{equation}
\EL[C] = \inf\limits_{\omega, f}\,\,\sum\limits_{i=1}^k\,\, \EL_\omega(A_i),
\end{equation}
where the infimum runs over all conformal structures~$\omega$ on~$A$
(which amounts to a choice of modulus for each $A_i$) and over all
conformal embeddings $f\co A \hookrightarrow S$ so that the image of the core
curve of~$A_i$ is isotopic to~$C_i$.

More generally, if $C = \sum_{i=1}^k a_i C_i$ is a weighted simple
multi-curve on~$S$, then, with notation as above,
\begin{equation}\label{eq:el-total-el}
\EL[C] = \inf_{\omega,f}\,\,\sum\limits_{i=1}^k\,\,
  a_i^2 \EL_\omega(A_i),
\end{equation}
where the supremum runs over the same set.
\end{proposition}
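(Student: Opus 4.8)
The plan is to prove the weighted statement~\eqref{eq:el-total-el} directly, since the unweighted version is the special case $a_i = 1$. I would establish the identity by proving two inequalities, working from the definition of extremal length as a sup over conformal scale factors and the definition of $\EL_\omega(A_i) = 1/\Mod(A_i)$.

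For the inequality $\EL[C] \le \inf_{\omega,f} \sum_i a_i^2 \EL_\omega(A_i)$, I would fix any conformal structure $\omega$ on $A = \bigsqcup A_i$ and any conformal embedding $f \co A \hookrightarrow S$ with the core of $A_i$ mapped to a curve isotopic to $C_i$, and show that $\EL[C] \le \sum_i a_i^2 \EL_\omega(A_i)$. Realize each $A_i$ as a standard Euclidean cylinder $([0,\ell_i]\times[0,w_i])/{\sim}$ with $\EL_\omega(A_i) = \ell_i/w_i$. Build a test metric $\rho g$ on $S$ by pushing forward, via $f$, the flat metric on $\bigsqcup A_i$ rescaled on the $i$-th cylinder by the constant $c_i$ chosen proportional to $a_i / w_i$ (so that the $i$-th cylinder contributes length $\ge a_i \ell_i c_i$ to any representative of $C$ homotopic into it, while its area is $c_i^2 \ell_i w_i$), and set $\rho = 0$ outside $f(A)$. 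Optimizing the overall scaling and the relative constants $c_i$ via Cauchy--Schwarz gives $\ell_{\rho g}[C]^2 / A_{\rho g}(S) \ge \sum_i a_i^2 (\ell_i / w_i)$, whence $\EL[C] \le \sum_i a_i^2 \EL_\omega(A_i)$; taking the infimum over $\omega, f$ finishes this direction. The point that every representative of $[C]$ must cross each cylinder the appropriate number of times in the correct homotopy class is where I'd be slightly careful, but it's standard once one notes the components $C_i$ are disjoint and simple.

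For the reverse inequality $\EL[C] \ge \inf_{\omega,f}\sum_i a_i^2 \EL_\omega(A_i)$, I would use the near-optimal metrics. Given $\varepsilon > 0$, pick a conformal scale factor $\rho$ on $S$ with $\ell_{\rho g}[C]^2 / A_{\rho g}(S) \ge \EL[C] - \varepsilon$, and (after normalizing) a representative of each $C_i$; by a collaring/regular-neighborhood argument, disjoint embedded annular neighborhoods $N_i$ of the $C_i$ can be chosen, and restricting $\rho g$ to $N_i$ gives a conformal structure on $N_i$ in which the core has some extremal length $e_i \le \Mod(N_i)^{-1}$-type bound. The subtlety is that the pieces of $\rho g$ outside $\bigcup N_i$ are "wasted" area, and one must choose the $N_i$ to recapture almost all of the ratio; this is exactly the classical fact that the extremal metric for a multi-curve is supported (up to measure zero) on a disjoint union of flat cylinders, which I would invoke via the characterization of extremal length by the minimal-area problem (or cite Gardiner--Lakic / standard Teichmüller theory). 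Then taking $\omega$ on $A$ to match the $N_i$ and $f$ the inclusion shows $\sum_i a_i^2 \EL_\omega(A_i) \le \EL[C] - \varepsilon + o(1)$, and letting $\varepsilon \to 0$ completes the proof.

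The main obstacle is the second inequality: showing that the extremal-length ratio on $S$ can be concentrated, up to arbitrarily small loss, on a disjoint union of embedded annuli with the prescribed core curves. The clean way is to cite the classical theory of extremal length for systems of curves (the extremal metric exists and is a Jenkins--Strebel-type flat metric whose non-singular trajectories foliate cylinders around the $C_i$), so that one can extract genuine conformal subannuli on the nose rather than merely asymptotically. If one wants a self-contained argument, the fallback is to do the $\varepsilon$-approximation carefully, controlling the area outside the chosen collars — routine but slightly technical. Everything else is a Cauchy--Schwarz bookkeeping exercise with the weights $a_i$.
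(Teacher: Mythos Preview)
Your argument for the inequality $\EL[C] \le \sum_i a_i^2 \EL_\omega(A_i)$ is backwards. Building a test metric $\rho$ and bounding the ratio $\ell_{\rho g}[C]^2/A_{\rho g}(S)$ from below can only give a \emph{lower} bound on $\EL[C]$, since $\EL[C]$ is a supremum over metrics; so ``ratio $\ge X$'' yields $\EL[C]\ge X$, not $\EL[C]\le X$. Independently, the length bound you assert is false for a general embedding: with $\rho = 0$ outside $f(A)$, a representative of $C_i$ can leave $A_i$ and complete its homotopy class through the zero-metric region at no cost. (On a torus with $C$ a meridian and $A$ a thin annular neighborhood, the complementary annulus already contains a curve isotopic to~$C$, so $\ell_{\rho g}[C]=0$.) Your test-metric argument \emph{does} work for one specific embedding, namely the Jenkins--Strebel cylinders, which fill~$S$ up to measure zero so there is nowhere to escape; that is essentially Proposition~\ref{prop:el-area}, and it proves the \emph{other} direction $\EL[C]\ge\inf$.

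The correct proof of $\EL[C] \le \sum_i a_i^2 \EL_\omega(A_i)$ goes the opposite way: take an arbitrary conformal scale factor~$\rho$ on~$S$, restrict it to each $A_i$, and use $\ell_\rho[C_i]^2 \le \EL_\omega(A_i)\cdot A_\rho(A_i)$ (definition of extremal length on~$A_i$, plus the fact that curves in~$A_i$ are a subfamily of $[C_i]$ on~$S$); then Cauchy--Schwarz and disjointness of the~$A_i$ give $\ell_\rho[C]^2/A_\rho(S)\le\sum_i a_i^2\EL_\omega(A_i)$, and taking the sup over~$\rho$ finishes. The paper does not argue either inequality separately: it cites Strebel's Theorem~20.5 (the functional $\sum_i a_i^2\EL(A_i)$ over disjoint embedded annuli is minimized exactly by the Jenkins--Strebel cylinders of Theorem~\ref{thm:quad-diff-height}) and identifies that minimum value with $\EL[C]$ via Proposition~\ref{prop:el-area}.
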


We delay the proof of Proposition~\ref{prop:el-total-el} a little.

We can also give a characterization of~$\EL$ in terms of Jenkins-Strebel
differentials.

\begin{definition}
  A \emph{Jenkins-Strebel} quadratic differential~$q$ on~$S$ is one
  where almost every leaf of $\Fh(q)$ is closed. In this case, the
  quadratic differential gives a canonical decomposition of~$S$
  into annuli foliated by the closed leaves.
\end{definition}

\begin{citethm}\label{thm:quad-diff-height}
  Let $C=\bigcup_i a_i C_i$ be a weighted simple closed multi-curve on a Riemann
  surface~$S$ so that no $C_i$ is trivial. Then there is a
  unique Jenkins-Strebel
  differential~$q_C\in\Quad^+(S)$ so that $\Fh(q_C)$ can be
  decomposed as a disjoint union of annuli~$A_i$ with each $A_i$ being
  a union of leaves of transverse measure~$a_i$ and core curve
  isotopic to~$C_i$. With
  respect to
  $\abs{q_C}$, each $A_i$ is isometric to a right Euclidean cylinder.
\end{citethm}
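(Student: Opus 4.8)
The plan is to deduce Theorem~\ref{thm:quad-diff-height} from the Hubbard--Masur realization of a measured foliation by a quadratic differential with prescribed horizontal foliation, applied to the foliation $F_C$ associated to the weighted simple closed multi-curve $C$, and transported to the bordered setting by doubling across $\bdy S$. (This is classical, going back to Jenkins and Strebel; alternatively one could quote Strebel's version of the Jenkins module theorem directly and check that its hypotheses hold here. I sketch the first route.) The two ingredients are: existence and uniqueness of a finite-area quadratic differential with a given horizontal foliation, and the elementary observation that when that foliation is $F_C$ the resulting differential is automatically of \emph{Jenkins--Strebel} type with the advertised cylinder decomposition. Note that the hypothesis that no $C_i$ is trivial is exactly what is needed to make $F_C$ a non-zero measured foliation, so that the realizing differential is non-zero.

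First I would set up the double. Let $\widehat S$ be the double of $\oS$ across $\bdy S$: a closed Riemann surface with punctures (the doubled punctures of $S$), carrying an orientation-reversing anti-holomorphic involution $\sigma$ whose fixed-point set is $\bdy S$; finite-area quadratic differentials on $\widehat S$ are allowed at most simple poles at the punctures. Realize $C$ as a measured foliation $F_C$ on $S$ supported on flat cylinders of transverse measure $a_i$ about the $C_i$, arranged tangent to $\bdy S$ (pushing a cylinder out to contain a boundary component as a leaf whenever the corresponding $C_i$ is boundary-parallel), and let $\widehat F_C$ be its $\sigma$-invariant double. Hubbard--Masur on $\widehat S$ produces a unique quadratic differential $\widehat q$ (finite area, at most simple poles at punctures) with $[\Fh(\widehat q)] = [\widehat F_C]$. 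The anti-holomorphic involution of the space of quadratic differentials on $\widehat S$ induced by $\sigma$ sends $\widehat q$ to a differential whose horizontal foliation is again $\widehat F_C$, so by uniqueness it fixes $\widehat q$; hence $\widehat q$ is real with respect to $\sigma$ and descends to $q_C \in \Quad^\RR(S)$. Because the leaves of $\widehat F_C$ meeting $\bdy S$ are either parallel to $\bdy S$ or equal to $\bdy S$, the differential $q_C$ is non-negative on vectors tangent to $\bdy S$, i.e.\ $q_C \in \Quad^+(S)$; and uniqueness of $q_C$ within $\Quad^+(S)$ follows from that of $\widehat q$, since any competitor on $S$ doubles to a competitor on $\widehat S$.

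Finally I would identify the internal structure of $q_C$. Since $[\Fh(q_C)]$ has, by construction, the same intersection numbers with every element of $\Curves^-(S)$ as $C$ does, Proposition~\ref{prop:mf-measure} forces $\Fh(q_C)$ to be the measured foliation of $C$; in particular almost every horizontal leaf is closed, so $q_C$ is Jenkins--Strebel, and its cylinder decomposition consists of annuli $A_i$ with core curve isotopic to $C_i$ and transverse measure $a_i$. That each $A_i$ is isometric, in the $\abs{q_C}$ metric, to a right Euclidean cylinder is then immediate from the half-translation coordinates, in which a maximal family of closed horizontal leaves is literally a Euclidean rectangle with its two vertical sides glued by a translation. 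The main obstacle is the boundary bookkeeping in the doubling argument --- verifying carefully that the construction lands in $\Quad^+(S)$ (boundary-parallel components, simple poles and finite area at the punctures, and the precise statement of Hubbard--Masur on a punctured surface) --- rather than any conceptual difficulty, since the interior theory is entirely standard.
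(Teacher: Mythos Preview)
The paper does not prove this statement at all: it is stated as a \texttt{citethm} and immediately followed by ``For a proof, see, e.g., Strebel \cite[Theorem 21.1]{Strebel84:QuadDiff}, who attributes the theorem to Hubbard--Masur \cite{HM79:QuadDiffFol} and Renelt \cite{Renelt76:QD}.'' So there is nothing to compare against on the paper's side beyond a pointer to the literature.

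Your outline is a correct sketch of one of the standard routes. Doubling $S$ across $\bdy S$, applying Hubbard--Masur on the closed punctured double, and using the anti-holomorphic involution to force the realizing differential to be $\sigma$-symmetric is exactly how one transports the closed-surface heights theorem to the bordered setting; the paper itself cites Marden--Strebel \cite{MS84:Heights} for this boundary version when it states the more general Theorem~\ref{thm:heights}. Your identification of $q_C$ as Jenkins--Strebel once $[\Fh(q_C)]=[F_C]$ is also fine (closedness of generic leaves is preserved under Whitehead equivalence), and the appeal to Proposition~\ref{prop:mf-measure} is harmless though redundant, since $[\Fh(q_C)]=[F_C]$ already comes out of the construction. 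The alternative you mention---quoting Strebel's module problem directly---is closer to what the paper's primary reference (Strebel, Theorem~21.1) actually does, and has the advantage of avoiding the doubling bookkeeping you flag as the main nuisance.
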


For a proof, see, e.g., Strebel \cite[Theorem
21.1]{Strebel84:QuadDiff}, who attributes the theorem to
Hubbard-Masur \cite{HM79:QuadDiffFol} and Renelt \cite{Renelt76:QD}.
This theorem is one of three different
standard theorems on the existence of Jenkins-Strebel differentials.

\begin{proposition}\label{prop:el-area}
  For $C$ a weighted simple closed multi-curve on~$S$ with no trivial
  components, let $q = q_C$ be the associated quadratic differential from
  Theorem~\ref{thm:quad-diff-height}. Then
  \begin{equation}
    \EL[C] = A_{\abs{q}}(S).
  \end{equation}
\end{proposition}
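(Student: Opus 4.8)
The plan is to prove the two inequalities $\EL[C] \le A_{\abs{q}}(S)$ and $\EL[C] \ge A_{\abs{q}}(S)$ separately, using the explicit cylinder structure provided by Theorem~\ref{thm:quad-diff-height}.

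For the upper bound, I would use $g = \abs{q}$ itself as the competitor metric in the supremum defining $\EL[C]$ (Equation~\eqref{eq:el-sup}). By Theorem~\ref{thm:quad-diff-height}, $\Fh(q)$ decomposes $S$ into right Euclidean cylinders $A_i$, where $A_i$ has circumference $\ell_i$ (the $\abs{q}$-length of its core curve, which realizes $\ell_{\abs{q}}[C_i]$ since closed leaves are geodesics) and height $a_i$. Then $A_{\abs{q}}(S) = \sum_i a_i \ell_i$, while $\ell_{\abs{q}}[C] = \sum_i a_i \ell_i$ as well — wait, more carefully: $\ell_{\abs q}[C_i] = \ell_i$ so $\ell_{\abs q}[C] = \sum_i a_i \ell_i$, and this gives $\ell_{\abs q}[C]^2 / A_{\abs q}(S) = (\sum_i a_i \ell_i)^2 / (\sum_i a_i \ell_i) = \sum_i a_i \ell_i = A_{\abs q}(S)$. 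Hmm, that only works cleanly if there is a single cylinder; in general I should instead observe that $\EL[C] = \inf_{\omega, f} \sum_i a_i^2 \EL_\omega(A_i)$ by Proposition~\ref{prop:el-total-el}, and the Jenkins--Strebel cylinders give a particular choice of $(\omega, f)$ for which $\EL_\omega(A_i) = \ell_i / a_i$, so $\sum_i a_i^2 \EL_\omega(A_i) = \sum_i a_i \ell_i = A_{\abs q}(S)$. This yields $\EL[C] \le A_{\abs q}(S)$.

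For the lower bound, I would show $\sum_i a_i^2 \EL_\omega(A_i) \ge A_{\abs q}(S)$ for every competing conformal embedding $f \co A = \bigsqcup A_i \hookrightarrow S$ with the correct isotopy classes, again invoking Proposition~\ref{prop:el-total-el}. This is the standard extremal-length/area argument: pull back the flat metric $\abs{q}$ via $f$ to each $A_i$, and compare. On each annulus $A_i$ with modulus $m_i$, every curve isotopic to the core has $\abs{q}$-length at least $a_i$ (since it must cross the cylinder structure, i.e., the transverse measure $\Fh(q)$ of such a curve is at least $i([C_i],[\Fh(q)]) = a_i$). The length--area inequality for the annulus then gives $A_{\abs q}(f(A_i)) \ge a_i^2 / \EL_{\omega}(A_i) \cdot$ (something) — concretely, $\EL_\omega(A_i) = 1/m_i$ and the flat length-times-width estimate gives $A_{\abs q}(f(A_i)) \ge a_i^2 \, m_i = a_i^2 / \EL_\omega(A_i)$; rearranging, $a_i^2 \EL_\omega(A_i) \ge a_i^4 / A_{\abs q}(f(A_i))$ — this is getting tangled, so the clean route is: apply the length--area method directly with the metric $\rho$ supported on $f(A)$ equal to $\abs{q}$, giving $\ell_\rho[C_i] \ge a_i$ and hence (Cauchy--Schwarz across the annulus) $\EL_\omega(A_i) \ge a_i^2 / A_{\abs q}(f(A_i))$. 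Since the $f(A_i)$ are disjoint, $\sum_i A_{\abs q}(f(A_i)) \le A_{\abs q}(S)$, and a weighted Cauchy--Schwarz / Lagrange-multiplier computation shows $\sum_i a_i^2 \EL_\omega(A_i) \ge (\sum_i a_i \cdot a_i)^2 / \sum_i A_{\abs q}(f(A_i)) \ge A_{\abs q}(S)$ once one checks the Jenkins--Strebel configuration is the equality case. Taking the infimum over $(\omega, f)$ gives $\EL[C] \ge A_{\abs q}(S)$.

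The main obstacle is the lower bound, specifically making the multi-cylinder Cauchy--Schwarz argument rigorous: one must verify that among all ways of distributing the fixed total area $A_{\abs q}(S)$ among the annuli $f(A_i)$, the Jenkins--Strebel proportions $A_{\abs q}(A_i) = a_i \ell_i$ are exactly the ones extremizing $\sum_i a_i^2 \EL_\omega(A_i)$, and that the competitor annuli cannot ``cheat'' by having their images overlap or wander outside a neighborhood of $C_i$ in a way that beats the flat estimate. This is handled by the standard fact that closed $\abs{q}$-geodesics minimize length in their isotopy class together with the observation that the extremal metric in Equation~\eqref{eq:el-sup} for this problem is precisely $\abs{q}$ — so in fact the cleanest presentation may simply cite Proposition~\ref{prop:el-total-el} and reduce everything to the classical single-annulus length--area inequality applied cylinder by cylinder, with the arithmetic of the weights being the only genuinely new bookkeeping.
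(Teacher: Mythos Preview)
Your proposal has two genuine problems.

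\textbf{Circularity.} You invoke Proposition~\ref{prop:el-total-el} for both inequalities, but in the paper's logical order that proposition is proved \emph{after} and \emph{using} Proposition~\ref{prop:el-area} (see the proof immediately following). So this route is not available.

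\textbf{Direction confusion and a wrong estimate.} Plugging the metric $\abs{q}$ into the supremum~\eqref{eq:el-sup} yields a \emph{lower} bound $\EL[C] \ge A_{\abs q}(S)$, not an upper bound. Your computation $\ell_{\abs q}[C]^2/A_{\abs q}(S) = \bigl(\sum_i a_i \ell_i\bigr)^2/\sum_i a_i \ell_i = A_{\abs q}(S)$ is in fact correct for any number of cylinders, since $\ell_{\abs q}[C] = \sum_i a_i \ell_i = A_{\abs q}(S)$; there was no need to abandon it. This is exactly how the paper handles that direction (justifying $\ell_{\abs q}[C_i]=\ell_i$ via the CAT(0) property or Teichmüller's lemma). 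For the genuine upper bound $\EL[C] \le A_{\abs q}(S)$, your length--area argument contains an error: a curve homotopic to $C_i$ is a \emph{leaf} of $\Fh(q)$, so $i([C_i],[\Fh(q)]) = 0$, not $a_i$; the correct lower bound on its $\abs q$-length is the circumference $\ell_i$, not the height $a_i$.

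The paper's actual argument for the upper bound avoids all of this: it applies Beurling's criterion directly. For an arbitrary scale factor~$\rho$, Cauchy--Schwarz over the cylinder decomposition gives $A_{\rho\abs q}(S) \ge \bigl(\sum_i a_i S_i\bigr)^2/A_{\abs q}(S)$ where $S_i$ bounds $\ell_{\rho\abs q}[C_i]$ from above, and hence $\ell_{\rho\abs q}[C]^2/A_{\rho\abs q}(S) \le A_{\abs q}(S)$. No annulus embeddings are needed.
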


Proposition~\ref{prop:el-area} should be standard, but we have been
unable to locate it in the literature. We provide a short proof,
an easy application of Beurling's criterion.

\begin{proof}
  We use $\abs{q}$ as the base metric in Equation~\eqref{eq:el-sup}
  (abusing notation slightly since $\abs{q}$ is not smooth).  Let
  $\ell_i = \ell_{\abs{q}}[C_i]$.  Since $\abs{q}$ is a locally CAT(0)
  metric and local geodesics in locally CAT(0) spaces are globally
  length-minimizing, $\ell_i$ is the length in~$\abs{q}$ of the core
  curve of the annulus~$A_i$.
  (This also follows from Teichmüller's Lemma \cite[Theorem
  14.1]{Strebel84:QuadDiff}.)
  Then, since
  $A_{q}(S) = \sum_i a_i \ell_i$ by the construction of~$q$ and
  $\ell_{\abs{q}}[C] = \sum_i a_i \ell_i$ by definition of $\ell_{\abs{q}}[C]$,
  \[
  \frac{\ell_{\abs{q}}[C]^2}{A_{q}(S)}
  = A_{q}(S),
  \]
  so $\EL[C] \ge A_{q}(S)$.

  For the other direction, let $\rho$ be the scaling factor relative
  to~$\abs{q}$ for another metric in the conformal class. For each $i$ and
  $t \in [0,a_i]$, let $C_i(t)$ be the curve on~$A_i$ at distance~$t$
  from one of boundary, let $s_i(t) = \int_{C_i(t)}\rho(x)\,dx$,
  and let
  $S_i = \min_{t\in[0,a_i]} s_i(t)$. Then, using the Cauchy-Schwarz
  inequality, we have
  \begin{align*}
    \ell_{\rho\abs{q}}[C] &\le \sum\nolimits_i a_i S_i\\
    A_{\rho\abs{q}}(S) &= \iint\nolimits_S \rho^2\,dA_q
       \ge \frac{1}{A_{q}(S)} \left(\iint\nolimits_S \rho\,dA_q\right)^2
       \ge \frac{1}{A_{q}(S)} \left(\sum\nolimits_i a_i S_i\right)^2\\
    \frac{\ell_{\rho\abs{q}}[C]^2}{A_{\rho\abs{q}}(S)}
       &\le A_{q}(S).\qedhere
  \end{align*}
\end{proof}

\begin{proof}[Proof of Proposition~\ref{prop:el-total-el}]
  The functional $\sum_{i=1}^k a_i^2 \EL(A_i)$ on the space of
  disjoint embeddings of annuli $A_i$ homotopic to $C_i$
  is minimized when the $A_i$ are the annuli from the decomposition of
  $\Fh(q_C)$
  from Theorem~\ref{thm:quad-diff-height} \cite[Theorem
  20.5]{Strebel84:QuadDiff}. In this case the value of
  the functional is $A_{q_C}(S)$, which is equal to $\EL[C]$ by
  Proposition~\ref{prop:el-area}.
\end{proof}

More generally, we can work with arbitrary measured foliations, rather
than simple multi-curves.

\begin{citethm}[Heights theorem]\label{thm:heights}
  Let $[F] \in \MF^+(S)$ be a measured foliation on a Riemann
  surface~$S$. Then there is a unique quadratic differential $q_F \in
  \Quad^+(S)$ so that $[\Fh(q_F)] = [F]$. Furthermore, $q_F$ depends
  continuously on~$F$.
\end{citethm}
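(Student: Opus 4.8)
The plan is to deduce this from the Jenkins--Strebel case (Theorem~\ref{thm:quad-diff-height}) together with the density result Proposition~\ref{prop:curves-dense} and a compactness/limiting argument. First I would establish \emph{existence}. Given $[F] \in \MF^+(S)$, use Proposition~\ref{prop:curves-dense} to choose weighted rational simple closed multi-curves $C_n$ with $[C_n] \to [F]$ in $\MF^+(S)$. By Theorem~\ref{thm:quad-diff-height} each $C_n$ has an associated Jenkins--Strebel differential $q_{C_n} \in \Quad^+(S)$ with $[\Fh(q_{C_n})] = [C_n]$. The key estimate is that the $\abs{q_{C_n}}$-areas stay bounded: by Proposition~\ref{prop:el-area}, $A_{\abs{q_{C_n}}}(S) = \EL_S[C_n]$, and one checks that $\EL_S$ extends continuously to (or is at least bounded on convergent sequences in) $\MF^+(S)$ — indeed extremal length of a measured foliation can be defined directly by Equation~\eqref{eq:el-sup} with $\ell_g$ replaced by the $g$-length of $F$, and this is continuous in $F$. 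Hence the $q_{C_n}$ lie in a bounded subset of the finite-dimensional space $\Quad^\RR(S)$ (bounded area forces bounded coefficients, using that the pole orders at punctures are controlled), so after passing to a subsequence $q_{C_n} \to q_\infty \in \Quad^+(S)$. Since $F \mapsto [\Fh(q)]$ is continuous (the horizontal foliation varies continuously with $q$, and the projection to $\MF^+(S)$ is continuous by the proposition following Proposition~\ref{prop:mf-measure}), we get $[\Fh(q_\infty)] = \lim [C_n] = [F]$, giving existence.

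Next, \emph{uniqueness}. Suppose $q_1, q_2 \in \Quad^+(S)$ both have horizontal foliation Whitehead-equivalent to $[F]$. The standard tool here is the Reich--Strebel / Minsky-type inequality, or more elementarily a direct length--area argument: for any quadratic differential $q \in \Quad^+(S)$ one has $i([C],[\Fh(q)]) \ge \bigl(\text{length of }C\text{ in the }\abs{q}\text{-metric measured vertically}\bigr)$, with a characterization of equality. A clean route is to invoke the fundamental inequality $\EL_S[F] \le A_{\abs{q}}(S)$ for every $q$ with $[\Fh(q)] = [F]$, with equality iff $q$ is ``extremal'', and then show the extremal one is unique by a convexity argument: if $q_1 \ne q_2$ both realize $[F]$, then $\tfrac12(\sqrt{q_1}+\sqrt{q_2})$ — interpreted via the half-translation structures / the associated flat metrics — has the same horizontal foliation but strictly smaller area, contradicting extremality. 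This is exactly the argument used for the classical Hubbard--Masur theorem, adapted to surfaces with boundary using $\Quad^+(S)$ in place of the space of holomorphic quadratic differentials on a closed surface.

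Finally, \emph{continuity of $F \mapsto q_F$}. This follows formally from existence, uniqueness, and a compactness argument: suppose $F_n \to F$ but $q_{F_n} \not\to q_F$. The areas $A_{\abs{q_{F_n}}}(S) = \EL_S[F_n]$ are bounded (by continuity of extremal length on the convergent sequence $F_n \to F$), so $\{q_{F_n}\}$ is precompact in $\Quad^\RR(S)$; any subsequential limit $q'$ satisfies $[\Fh(q')] = [F]$ by continuity of the horizontal-foliation map, hence $q' = q_F$ by uniqueness — contradicting $q_{F_n} \not\to q_F$.

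I expect the main obstacle to be the boundary case: most textbook treatments of the Heights/Hubbard--Masur theorem are for closed surfaces, and here one is working with $\Quad^+(S)$ on a surface with boundary, where the space is cut out by the reality and non-negativity conditions on $\partial S$ and where foliations are tangent to the boundary. One must check (i) that the bounded-area $\Rightarrow$ bounded-in-$\Quad^\RR(S)$ implication survives the presence of at-most-simple poles at punctures, and (ii) that the convexity/uniqueness argument respects the cone $\Quad^+(S)$, i.e., that the ``average'' of two foliation-preserving differentials stays in $\Quad^+(S)$ and is still holomorphic — this is where the half-translation-surface viewpoint (gluing the flat structures along the common horizontal foliation) does the real work.
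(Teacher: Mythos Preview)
The paper does not give its own proof of this statement: it is a \texttt{citethm}, and immediately after the statement the authors simply cite Hubbard--Masur, Kerckhoff, Marden--Strebel, and Wolf, noting that Marden--Strebel handle the boundary case. So there is nothing to compare your argument against in the paper itself.

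That said, your existence and continuity sketches follow the standard compactness-plus-uniqueness template and are fine as outlines; the finite-dimensionality of $\Quad^\RR(S)$ makes the ``bounded area $\Rightarrow$ precompact'' step legitimate, and the closedness of the cone $\Quad^+(S)$ keeps the limit where you want it.

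The uniqueness step, however, has a real gap. The object $\tfrac12(\sqrt{q_1}+\sqrt{q_2})$ is not globally defined: each $\sqrt{q_i}$ lives on the canonical branched double cover determined by the (odd-order) zeros of $q_i$, and if $q_1$ and $q_2$ have different zero sets those covers are different, so there is no common surface on which to add the square roots and then square to get a holomorphic quadratic differential on~$S$. Knowing only $[\Fh(q_1)]=[\Fh(q_2)]$ as Whitehead classes does not pin down the zero structure. Moreover, even granting the construction, your argument presupposes that \emph{every} $q$ with $[\Fh(q)]=[F]$ has $A_{\abs{q}}(S)=\EL_S[F]$ (so that ``strictly smaller area'' is a contradiction); that extension of Proposition~\ref{prop:el-area} to arbitrary foliations is itself part of what needs to be shown. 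The standard route (as in Marden--Strebel) avoids both issues: one proves a length--area inequality of the form $A_{q_1}(S)\le \int_S \abs{q_1}^{1/2}\abs{q_2}^{1/2}$ by measuring horizontal trajectories of $q_1$ in the $\abs{q_2}$-metric, then applies Cauchy--Schwarz and symmetry to force $\abs{q_1}=\abs{q_2}$ and hence $q_1=q_2$.
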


Proofs of Theorem~\ref{thm:heights} have been
given by many authors
\cite{HM79:QuadDiffFol,Kerckhoff80:AsympTeich,MS84:Heights,Wolf96:RealizingMF}.
Of these, Marden and Strebel \cite{MS84:Heights} treat surfaces with
boundary.
By analogy
with Proposition~\ref{prop:el-area}, we define
\begin{equation}
  \EL[F] \coloneqq A_{q_F}(S).
\end{equation}
$\EL[F]$ can also be given a definition closer to
Definition~\ref{def:el-curves}
\cite[Section~4.4]{McMullen12:RS-course}.

\subsection{Stretch factors}
\label{sec:stretch-factors}

We now turn to a few elementary facts about stretch factors, as
already defined in Definition~\ref{def:sf}.

\begin{proposition}
  If $f \co R \hookrightarrow S$ is an topological embedding of
  Riemann surfaces where $R$ is not a small surface, then $\SF[f]$ is
  defined and finite.
\end{proposition}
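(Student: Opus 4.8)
The plan is to show two things: $\SF[f]$ is well-defined (the supremum is over a non-empty set) and it is finite (the supremum is not $+\infty$). Non-emptiness is immediate: since $R$ is not small, Lemma~\ref{lem:EL-small} guarantees the existence of a curve with non-zero extremal length, so the index set $\{C \in \Curves^+(R) : \EL_R[C] \ne 0\}$ is non-empty. The substance is the finiteness claim, i.e., that there is a constant $M$ with $\EL_S[f(C)] \le M\,\EL_R[C]$ for every simple closed multi-curve $C$ on $R$.

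For finiteness, the natural approach is a comparison-of-metrics argument. First I would fix a finite-area Riemannian metric $g_S$ on $S$ in its conformal class and pull it back (on $f(R)$, using that $f$ is an embedding) to a metric on $R^\circ$; since $R$ is compact with $f$ extending continuously to $\closure R$, this pullback metric $f^*g_S$ is dominated by a fixed finite-area conformal metric $g_R$ on $R$: there is a constant $K$ with $f^*g_S \le K\, g_R$ pointwise. Hence for any rectifiable representative of $[C]$ we get $\ell_{g_S}[f(C)] \le \sqrt{K}\,\ell_{g_R}[C]$, and $A_{g_S}(S) \ge A_{g_S}(f(R))$ is bounded below—wait, that inequality goes the wrong way for the area term, so instead I would use extremal length more carefully. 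The cleanest route: use $g_S$ as the base metric for $\EL_S$, so $\EL_S[f(C)] = \sup_\rho \ell_{\rho g_S}[f(C)]^2 / A_{\rho g_S}(S)$; restricting any competitor scale factor $\rho$ on $S$ to $f(R)$ and pushing forward via $f$ gives a scale factor on $R$ that is a legitimate (though possibly non-optimal) competitor for $\EL_R[C]$ after comparing with $g_R$. Concretely, $\ell_{\rho g_S}[f(C)]$ equals the $(f^*(\rho g_S))$-length of $C$, which is $\le \sqrt K$ times the $((\rho\circ f)\,g_R)$-length of $C$, while $A_{\rho g_S}(S) \ge A_{\rho g_S}(f(R)) = A_{f^*(\rho g_S)}(R) \ge \tfrac1K A_{(\rho\circ f)g_R}(R)$ provided $g_R \le K f^*g_S$ as well—so I should choose $g_R$ and $K$ so that $\tfrac1K g_R \le f^*g_S \le K g_R$ on $R$, which is possible by compactness since $f$ is a topological embedding that is a local diffeomorphism onto its image. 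Then $\EL_S[f(C)] \le K^2 \EL_R[C]$, giving $\SF[f] \le K^2 < \infty$.

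The main obstacle is the behavior near the boundary and punctures of $R$: the pullback metric $f^*g_S$ need not be bi-Lipschitz to a smooth metric on $\closure R$ if $f$ does something degenerate there (e.g., $f$ maps several boundary circles of $R$ onto overlapping or non-transverse curves in $S$, or a boundary arc of $\partial R$ maps into $S^\circ$). I would handle this by invoking the definition of topological map (Definition~\ref{def:surface}): $f$ extends continuously to $\closure R \to \closure S$ and is orientation-preserving and one-to-one on $R^\circ$, so on the compact set $\closure R$ the pullback of a fixed smooth conformal metric is a continuous symmetric 2-tensor that is positive-definite on $R^\circ$; the only worry is degeneration at $\partial R \cup P$. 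Since $\EL$ is insensitive to removing punctures (they have measure zero) and $C \in \Curves^+(R)$ consists of closed curves that may be isotoped into the interior, one can work with a slightly smaller compact subsurface $R' \Subset R^\circ$ carrying all nontrivial isotopy classes—but a cleaner fix is to note $\EL_R[C] = \EL_{R^\circ}[C]$ and apply the comparison on any compact exhaustion, then take the constant from the whole of $\closure R$ using that $f$ is proper as a map of pairs. I expect this boundary bookkeeping, rather than the core inequality, to be where the care is needed.
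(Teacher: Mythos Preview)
Your plan is sound and in fact goes well beyond the paper's own argument, which is the single sentence ``Immediate from Lemma~\ref{lem:EL-small}.'' That lemma supplies exactly the non-emptiness you cite and nothing more; the paper does not spell out finiteness here at all. Your metric-comparison argument is essentially the content of Lemma~\ref{lem:qc-sf} (stated later in the paper): a $K$-quasi-conformal embedding satisfies $\EL_S[f(C)]\le K\,\EL_R[C]$, and a smooth embedding of a compact surface is $K$-quasi-conformal for some finite~$K$.

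One genuine correction: you assert that $f$ ``is a local diffeomorphism onto its image'', but by Definition~\ref{def:surface} a topological map is merely continuous, so $f^*g_S$ need not make sense. The fix is immediate and standard: $\SF[f]$ depends only on the homotopy class, and any topological embedding of compact surfaces is homotopic to a smooth embedding $g\colon\closure R\to\closure S$ with $dg$ nondegenerate on all of $\closure R$. Working with~$g$, the pullback $g^*g_S$ is a smooth Riemannian metric on the compact manifold~$\closure R$, hence bi-Lipschitz to your chosen conformal metric~$g_R$ by compactness, and your inequalities go through to give $\SF[f]\le K^2$. Once you make this replacement, the boundary and puncture degenerations you worry about in your final paragraph simply do not occur: the bi-Lipschitz constant is uniform on~$\closure R$, and there is no need to pass to a subsurface $R'\Subset R^\circ$ or to an exhaustion.
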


\begin{proof}
  Immediate from Lemma~\ref{lem:EL-small}.
\end{proof}

\begin{definition}
  By analogy with Definition~\ref{def:curves}, we say that a
  subsurface $S'$ of a surface~$S$ is \emph{trivial} if~$S'$ is
  contained in a disk or once-punctured disk inside~$S$.
\end{definition}

\begin{proposition}
  For $f \co R \hookrightarrow S$ a topological embedding of Riemann
  surfaces where $S$ is not small, $\SF[f] = 0$ if and only if $f(R)$
  is trivial as a
  subsurface of~$S$.
\end{proposition}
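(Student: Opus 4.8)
The plan is to prove both directions of the equivalence.

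For the ``if'' direction, suppose $f(R)$ is trivial as a subsurface of $S$, so $f(R)$ is contained in a disk or once-punctured disk $D \subset S$. Take any simple multi-curve $C \in \Curves^+(R)$ with $\EL_R[C] \ne 0$; by Lemma~\ref{lem:EL-small} this just means $C$ has a non-trivial component in $R$. Its image $f(C)$ lies in $D$, hence every component of $f(C)$ is trivial in $S$ (contained in a disk or once-punctured disk), so $[f(C)] = 0$ in $\Curves(S)$ and $\EL_S[f(C)] = 0$. Therefore every term in the supremum defining $\SF[f]$ is $0$, and $\SF[f] = 0$.

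For the ``only if'' direction, suppose $f(R)$ is not trivial in $S$. The goal is to exhibit a single $C \in \Curves^+(R)$ with $\EL_R[C] \ne 0$ and $\EL_S[f(C)] \ne 0$, which forces $\SF[f] > 0$. The key point is to find a simple closed curve $C$ in $R^\circ$ that is non-trivial in $R$ and whose image is non-trivial in $S$. Since $f(R)$ is not contained in any disk or once-punctured disk of $S$, but $f$ restricted to $R^\circ$ is an embedding, I would argue as follows: if every essential simple closed curve of $R$ mapped to a trivial curve in $S$, one can build up from a pair of pants decomposition (or handle structure) of $R^\circ$ to conclude that $f(R)$ itself lies in a disk or once-punctured disk, contradicting non-triviality. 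Concretely, $R$ is not small (otherwise $f(R)$ has no essential curves but could still be an annulus or similar — one must handle the annulus/essential-arc cases separately, using that an annulus that is non-trivially embedded has non-trivial core curve). Once such a $C$ is found, Lemma~\ref{lem:EL-small} gives $\EL_R[C] > 0$ and $\EL_S[f(C)] > 0$, so the ratio is positive and $\SF[f] \ge \EL_S[f(C)]/\EL_R[C] > 0$.

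The main obstacle is the non-triviality argument in the ``only if'' direction: ruling out the possibility that $R$ has essential curves all of which become trivial in $S$ while $f(R)$ stays non-trivial. The cleanest route is probably topological: a subsurface of $S$ all of whose essential closed curves are null-homotopic (or bound once-punctured disks) in $S$ must itself be planar and, together with a count of how punctures can be absorbed, must sit inside a disk or once-punctured disk. One should be careful about the case where $R$ is an annulus or a disk with essential boundary behavior, where ``$\Curves^+$'' might be small; here the reduction is that a non-trivially embedded annulus has a core curve that is essential in both surfaces. I expect the rest to be routine given Lemma~\ref{lem:EL-small}.
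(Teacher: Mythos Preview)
Your proposal is correct in outline and matches the paper's approach, but you are making the ``only if'' direction harder than it needs to be. The simplification the paper uses is this: once you have a simple closed curve $C$ on~$R$ with $f(C)$ non-trivial in~$S$, the fact that $C$ is non-trivial in~$R$ is automatic, because an embedding carries a disk or once-punctured disk in~$R$ to a disk or once-punctured disk in~$S$. So you never need to search for a curve that is simultaneously essential in both surfaces; you only need one whose \emph{image} is essential in~$S$, and essentiality in~$R$ comes for free. With this observation, your ``main obstacle'' dissolves into the single topological statement that a non-trivial subsurface of~$S$ contains a simple closed curve that is non-trivial in~$S$, which the paper simply asserts. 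Your pants-decomposition sketch is aimed at the harder contrapositive (every essential-in-$R$ curve has trivial image $\Rightarrow$ $f(R)$ trivial) and, while workable, is more than is needed; the special cases you flag (annuli, etc.) also disappear once you argue in this direction.
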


\begin{proof}
  If $f(R)$ is trivial in~$S$, it is immediate that $SF[f] =
  0$. Otherwise, there is some simple curve~$C$ on~$R$ so that $f(C)$ is
  nontrivial in~$S$. It follows that $C$ is nontrivial in~$R$, and $\SF[f] \ge
  \EL[f(C)]/\EL[C] > 0$.
\end{proof}

\begin{proposition}\label{prop:sf-compose}
  If $f\co S_1 \hookrightarrow S_2$ and $g \co S_2
  \hookrightarrow S_3$ are two topological embeddings of
  Riemann surfaces, then
  \[
  \SF[f \circ g] \le \SF[f] \cdot\SF[g].
  \]
\end{proposition}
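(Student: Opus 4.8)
The plan is to chase the definition of the stretch factor through the composition. Note first that the statement as written has the inequality in what looks like the wrong direction — one would expect $\SF[g \circ f] \le \SF[f]\cdot\SF[g]$ for $f\co S_1 \hookrightarrow S_2$ and $g\co S_2\hookrightarrow S_3$ — so I would begin by fixing the orientation of the composition and then argue for $\SF[g\circ f]$, where $g\circ f\co S_1\hookrightarrow S_3$.

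First I would take an arbitrary simple multi-curve $C \in \Curves^+(S_1)$ with $\EL_{S_1}[C] \ne 0$. The key observation is the telescoping identity
\begin{equation*}
  \frac{\EL_{S_3}[(g\circ f)(C)]}{\EL_{S_1}[C]}
  = \frac{\EL_{S_3}[g(f(C))]}{\EL_{S_2}[f(C)]}
    \cdot \frac{\EL_{S_2}[f(C)]}{\EL_{S_1}[C]},
\end{equation*}
which is valid provided the intermediate quantity $\EL_{S_2}[f(C)]$ is nonzero. Each factor on the right is then bounded: the second factor is $\le \SF[f]$ directly from Definition~\ref{def:sf}, and the first factor is $\le \SF[g]$ since $f(C)$ is itself a simple multi-curve on~$S_2$ (embeddings are one-to-one on the interior, so this is genuinely a simple multi-curve) with nonzero extremal length. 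Taking the supremum over all admissible $C$ gives the claimed bound.

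The one genuine subtlety — and the step I expect to be the main obstacle — is the degenerate case where $\EL_{S_2}[f(C)] = 0$, i.e.\ $f(C)$ becomes trivial in~$S_2$ even though $C$ is nontrivial in~$S_1$. In that case the telescoping identity is meaningless, but then $f(C)$ is contained in a disk or once-punctured disk of~$S_2$, hence $g(f(C))$ is contained in a disk or once-punctured disk of~$S_3$, so $\EL_{S_3}[(g\circ f)(C)] = 0$ as well by Lemma~\ref{lem:EL-small} (contrapositive), and the ratio $\EL_{S_3}[(g\circ f)(C)]/\EL_{S_1}[C]$ is $0$, which is harmlessly below any bound. So I would dispose of this case first and then run the telescoping argument on the remaining curves. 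I should also record that the inequality is genuinely one-directional: composing can create extra ``room'' (annuli can fatten under inclusion), so equality need not hold, which is consistent with the phenomenon noted in the introduction that stretch factors can be strictly less than~$1$.
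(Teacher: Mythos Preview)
Your proposal is correct and is exactly the unpacking of what the paper means by ``Immediate from the definition'' --- the paper gives no further detail. You are also right that the statement has a typo (it should read $\SF[g\circ f]$), and your handling of the degenerate case $\EL_{S_2}[f(C)]=0$ is a genuine point the paper silently elides.
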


\begin{proof}
  Immediate from the definition.
\end{proof}

\subsection{Teichmüller space}
\label{sec:teichmuller-space}

We can assemble the Riemann surface structures on an underlying
smooth surface~$S$ into the (reduced) \emph{Teichmüller space}
$\Teich(S)$, meaning the space of Riemann surfaces $T$ together with a
homeomorphisms $\phi_T \co S \to T$, considered up to isotopies,
taking the boundary to itself but not required to fix it pointwise.
The \emph{Teichmüller distance} between two points in $\Teich(S)$ is
defined by
\[
  d(T,T') \coloneqq \frac{1}{2} \log K,
\]
where $K$ is the minimal stretching of any quasi-conformal homeomorphism~$f$
from $T$ to~$T'$ so that $(\phi_{T'})^{-1} \circ f \circ \phi_T$ is
isotopic to the identity. (Note that this definition uses
homeomorphisms, rather than the embeddings used in most of the paper.)

It is a standard result that there is a map~$f$ realizing the minimal
stretching~$K$, and that its Beltrami differential has the form
\begin{equation}\label{eq:quad-beltrami}
  \mu_f = \frac{K-1}{K+1} \frac{\overline{q}}{\abs{q}}
\end{equation}
for some quadratic differential $q \in q^{\RR}(T)$. Concretely, we
stretch the Euclidean metric $\abs{q}$ along $\mathcal{F}_h(q)$ by a
factor of~$K$. (Since $q$ is only real and not positive on
$\partial T$, $\mathcal{F}_h(q)$ will not in general be in $\MF^+(T)$.)
This is usually stated and proved for closed surfaces;
the case with boundary follows by considering $T \cup \overline{T}$,
the double of $T$ along its boundary, solving the problem in that
context, and observing that the optimal map~$f$ (which is usually
unique) can be chosen to be equivariant with respect to the
anti-holomorphic involution that interchanges $T$ and~$\overline{T}$
so must be real on $\partial T$.

It follows from Equation~\eqref{eq:quad-beltrami} that
\[
  \frac{\EL_{T'}(f_* \mathcal{F}_h(q))}{\EL_T(\mathcal{F}_h(q)} = K,
\]
and that this is the maximal ratio of extremal lengths. We can
approximate $\mathcal{F}_h(q)$ by a weighted multi-curve, possibly
with some arc components. We can therefore write the distance in terms
of ratios of extremal lengths. If $f \co T \to T'$ is a homeomorphism,
define a version of the stretch factor by
\[
  \SF^{\pm}[f] \coloneqq \sup_{C \in \mathcal{C}^\pm_{\RR}}
  \frac{\EL_{T'}[f(C)]}{\EL_T[C]}.
\]
That is, we allow arc components of the weighted multi-curve; extremal
length extends in the natural way to these multi-curves. If $C$ has
arc components, $f(C)$ is only well-defined since $f$ is a
homeomorphism. We have $\SF[f] \le \SF^{\pm}[f]$, since the
supremum is over a larger set.
\begin{citethm}\label{thm:teich-dist}
  The Teichmüller distance between $T,T' \in \Teich(S)$ is
  \[
    d(T, T') = \frac{1}{2}\log \SF^{\pm}[\id_{T,T'}].
  \]
\end{citethm}
Theorem~\ref{thm:teich-dist} was stated and proved by Kerckhoff
\cite[Theorem~4]{Kerckhoff80:AsympTeich} for closed surfaces. 
He furthermore restricted to simple curves (not multi-curves); the
technique for the reduction to simple curves cannot be made
equivariant with respect to the map interchanging the two components
of the mirror of~$T$.


\section{Slit maps and Ioffe's theorem}
\label{sec:slit}

The following terminology is adapted from Ioffe
\cite{Ioffe75:QCImbedding} and Fortier Bourque \cite{FB18:Couch}.

\begin{definition}\label{def:slit}
  On a connected surface $S$ with a non-zero measured
  foliation~$F$, a \emph{slit} is a finite union of
  closed segments of leafs of~$F$. (The leaf segments can meet
  at singularities of~$F$, and so the slit may be a graph.)
  A \emph{slit complement} in~$F$
  is the complement of a slit, and a \emph{topological slit map} with
  respect to~$F$ is
  the inclusion of a slit complement into $S$. (This is the inclusion
  of the interiors $R^\circ \hookrightarrow S^\circ$, which extends on
  a non-injective map $\closure{R} \to \closure{S}$.)

  If $f \co R \hookrightarrow S$ is a slit map with
  respect to $F \in \MF^+(S)$, then there is a natural pull-back
  measured foliation $f^* F \in \MF^+(R)$.

  If $R$ and $S$ are Riemann surfaces, a \emph{Teichmüller
    embedding} of dilatation $K \ge 1$ is an embedding $f\co
  R \hookrightarrow S$ with quadratic differentials $q_R\in
  \Quad^+(R)$ and
  $q_S \in \Quad^+(S)$ so that $f$ is a topological slit map with respect
  to $\Fh(q_S)$ and, in the natural coordinates determined by $q_R$
  and~$q_S$, the map $f$ has the form $f(x+iy) = Kx+iy$. Note
  that a Teichmüller embedding is $K$-quasi-conformal, and that
  $f^* \Fh(q_S) = \Fh(q_R)$.
\end{definition}

\begin{citethm}[Ioffe \cite{Ioffe75:QCImbedding}]\label{thm:ioffe}
  Let $R$ and $S$ be Riemann surfaces, with $S$
  connected, and let $f\co R \hookrightarrow S$ be a
  topological embedding so that no component of $R$ has trivial
  image in~$S$. Suppose that $f$ is not homotopic to a
  conformal embedding. Then there is a quasi-conformal embedding with
  minimal dilatation in $[f]$. Furthermore, there are unique quadratic
  differentials $q_R \in \Quad^+(R)$ and $q_S \in \Quad^+(S)$ so that
  all quasi-conformal embeddings with minimal dilatation are Teichmüller
  embeddings with respect to the same quadratic differentials on
  $R$ and~$S$.
\end{citethm}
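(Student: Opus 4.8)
The plan is to run the classical existence-and-uniqueness argument for Teichmüller's theorem, adapted to embeddings with free boundary. First I would set $K := \inf \mathrm{Dil}(g)$ over all quasi-conformal embeddings $g$ homotopic to $f$. This infimum is over a non-empty set: since $R$ and $S$ have finite topological type and $f$ extends continuously to $\closure{R} \to \closure{S}$, one can homotope $f$ to a smooth embedding whose image is a smoothly embedded compact subsurface, and such a map has bounded dilatation on the compact surface $\closure{R}$; so $1 \le K < \infty$. If the infimum is attained and equals $1$ the minimizer is conformal, contradicting the hypothesis, so attainment will force $K > 1$; the work is to prove attainment.

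For attainment I would take a minimizing sequence $f_n \in [f]$ with $\mathrm{Dil}(f_n) \to K$, all $K'$-quasi-conformal for a fixed $K' > K$. The family of $K'$-quasi-conformal maps from $R^\circ$ into the fixed surface $S^\circ$ is normal, so after passing to a subsequence $f_n \to f_\infty$ locally uniformly on $R^\circ$, and (using the removable-singularity behaviour at punctures and compactness of $\closure{S}$) uniformly on $\closure{R}$; in particular $f_\infty$ is homotopic to $f$, and lower semicontinuity of dilatation under locally uniform convergence gives $\mathrm{Dil}(f_\infty) \le K$. The crucial point is that $f_\infty$ is again an embedding, in the sense that it is injective on $R^\circ$ and induces the inclusion of a slit complement $\closure{R} \to \closure{S}$; this is where a moduli/compactness estimate is needed, and where the hypothesis that no component of $R$ has trivial image is used: Lemma~\ref{lem:EL-small} supplies, on each component, essential curves whose images under $f$ are essential in $S$, and the uniform dilatation bound then prevents the image from degenerating within the fixed homotopy class. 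Granting this, $f_\infty$ is the desired extremal quasi-conformal embedding, with $\mathrm{Dil}(f_\infty) = K > 1$.

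Next I would establish the Teichmüller structure by the infinitesimal variational method. The Beltrami coefficient $\mu = \bar\partial f_\infty / \partial f_\infty$ is extremal in its class, so by the Hamilton–Krushkal necessary condition together with the Reich–Strebel ``fundamental inequality'' in its sharp form, $\mu$ must have Teichmüller form $\mu = k\,\closure{q_R}/\abs{q_R}$ with $k = (K-1)/(K+1)$ for some holomorphic quadratic differential $q_R$ on $R^\circ$. The new feature compared with the closed-surface case is that $\partial R$ is free to move inside $S^\circ$, which enlarges the class of competing Beltrami coefficients; the correct space of test directions is $\Quad^+(R)$, the quadratic differentials that are real and non-negative on $\partial R$, and the extremal $q_R$ lies there. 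In the half-translation coordinates of $q_R$ the map $f_\infty$ then takes the form $x + iy \mapsto Kx + iy$, so $f_\infty$ carries $\Fh(q_R)$ to $\Fh(q_S)$ for a quadratic differential $q_S$ on the image; propagating the boundary behaviour gives $q_S \in \Quad^+(S)$, and since $f_\infty$ is injective but not onto, $S \setminus f_\infty(R)$ must be a union of horizontal leaf segments of $q_S$ — that is, $f_\infty$ is a Teichmüller embedding. Uniqueness of $q_R$ and $q_S$, and the assertion that every extremal embedding is a Teichmüller embedding for this same pair, then follows from the uniqueness half of Teichmüller's theorem — strict convexity of the dilatation functional, equivalently the equality case of the Reich–Strebel inequality — which forces any two extremal Beltrami coefficients in the class to coincide.

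The hard part will be the boundary analysis in the third step: making the variational argument rigorous with a free boundary — proving the extremal Beltrami coefficient genuinely has Teichmüller form with $q_R \in \Quad^+(R)$ rather than with some merely integrable quadratic differential, and that the complement of the image is exactly a union of horizontal slits — is precisely the content of Ioffe's theorem, and requires the version of extremal quasi-conformal theory adapted to surfaces with boundary (slit maps). A secondary subtlety is the non-degeneration step of the second paragraph: in the limit the image legitimately touches itself along the emerging slits, so ``$f_\infty$ remains an embedding'' cannot mean injectivity of $\closure{R} \to \closure{S}$, only injectivity on $R^\circ$ together with the slit-complement description, and one must check that no further collapse occurs.
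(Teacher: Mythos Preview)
The paper does not prove this statement: it is a \emph{citethm}, attributed to Ioffe \cite{Ioffe75:QCImbedding} and used as a black box. There is therefore no ``paper's own proof'' to compare against; the paper simply invokes the result in the proofs of Theorem~\ref{thm:emb} and Proposition~\ref{prop:sf-qc}.

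As for your sketch on its own merits, it is a reasonable outline of the classical extremal-quasiconformal argument, and you correctly flag the genuinely delicate points: the free-boundary variational analysis showing $q_R \in \Quad^+(R)$, the slit structure of the complement, and the fact that the limit map is an embedding only on~$R^\circ$ while the boundary may collapse onto slits. These are precisely the places where the argument goes beyond the closed-surface Teichmüller theorem and where Ioffe's original work does the actual labor. One point to be careful about in your non-degeneration step: normality alone does not prevent the limit $f_\infty$ from being constant on a component; you need the hypothesis that each component of $f(R)$ is essential in~$S$ to rule this out, and you gesture at this but the mechanism (e.g., that an essential curve in~$S$ has positive extremal length, bounding the image annulus from below) should be made explicit. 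Your proposal is an honest outline, but it is an outline of Ioffe's paper rather than a self-contained proof, as you yourself acknowledge in the final paragraph.
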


\begin{remark}
  The Teichmüller embedding is not in general unique, but two different
  embeddings differ by
  translations with respect to the two measured foliations
  \cite[Theorem 3.7]{FB18:Couch}.
\end{remark}

Ioffe's theorem gives a pair of measured foliations on $R$
and~$S$. To relate to Theorem~\ref{thm:emb}, we need to
approximate both of these measured foliations by simple multi-curves.
This is
more subtle than it appears at
first, since the natural map $f_* \co \Curves^+(R) \to
\Curves^+(S)$ does \emph{not} generally extend to a continuous map
$\MF^+(R) \to \MF^+(S)$, as the following example shows.

\begin{example}\label{examp:erase-hole}
  Let $S = S^2 \setminus \{D_a,D_b,D_c\}$ be the sphere minus three
  disks and let $R = S^2 \setminus \{D_a,D_b,D_c,D_d\}$ be the subsurface
  obtained by removing another disk. Pick a set of disjoint arcs
  $\gamma_{a,b}$, $\gamma_{a,c}$,
  $\gamma_{b,d}$, and $\gamma_{c,d}$ on~$S$ between the
  respective boundary components. For $s = p/q$ a positive rational number,
  there is a natural simple curve $C_s$ at slope $s$ with
  \begin{align*}
    i(\gamma_{a,c},C_s) &= i(\gamma_{b,d},F_s) = q\\
    i(\gamma_{a,b},C_s) &= i(\gamma_{c,d},F_s) = p,
  \end{align*}
  as illustrated in Figure~\ref{fig:sample-curves}.
\begin{figure}
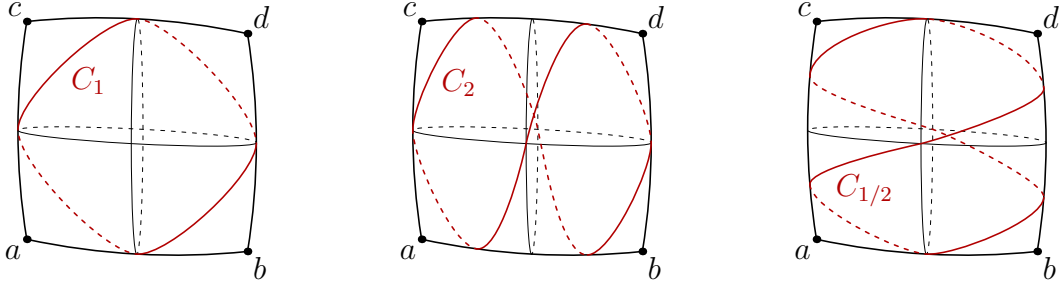

  \[
  \mfigb{pillow-1}
  \qquad\qquad
  \mfigb{pillow-2}
  \qquad\qquad
  \mfigb{pillow-3}
  \]
  \caption{Some of the curves $C_s$ in Example~\ref{examp:erase-hole}.}
  \label{fig:sample-curves}
\end{figure}
  Set $F_s \coloneqq (1/q) \cdot [C_s]$ for $s \in \QQ_+$, so that
  \begin{align*}
    i(\gamma_{a,c},F_s) &= i(\gamma_{b,d},F_s) = 1\\
    i(\gamma_{a,b},F_s) &= i(\gamma_{c,d},F_s) = s.
  \end{align*}
  Then $F_s$ extends to a continuous family of foliations for $s \in
  \RR_+$.

  For $s \in \QQ_+$, if we push forward $F_s$ by the inclusion map~$f$,
  we get a multiple
  of a simple curve
  on~$S$. There are only three simple curves on~$S$, the curves $C_a$,
  $C_b$, and $C_c$ around the respective boundary component. Which one
  we get depends only on the parity of $p$ and~$q$, where $s=p/q$ in
  lowest terms:
  \begin{equation}\label{eq:erase-hole}
    f_*[F_s] = \frac{1}{q} \cdot 
      \begin{cases}
        [C_a] & \text{$p$ odd, $q$ odd}\\
        [C_b] & \text{$p$ odd, $q$ even}\\
        [C_c] & \text{$p$ even, $q$ odd.}\\
      \end{cases}
  \end{equation}
  This map $f_*$
  has no continuous extension to~$\RR_+$.
\end{example}

\begin{example}
  We can improve Example~\ref{examp:erase-hole} to avoid dealing with
  curves around boundary components. Let $S'$ be the
  surface obtained from the previous surface~$S$ by gluing a pair of
  pants to $\partial D_a$, $\partial D_b$, and $\partial D_c$, and
  similarly glue a pair of pants to~$R$ to get~$R'$. Then $S'$ is a
  surface of genus two and $R'$ is a surface of genus two minus a
  disk. Then $F_s$ can be viewed as a continuous
  family of foliations on $R'$, and Equation~\eqref{eq:erase-hole}
  still holds.
\end{example}

Despite Example~\ref{examp:erase-hole}, we can still do simultaneous
approximations, using the techniques of Proposition~\ref{prop:curves-dense}.

\begin{proposition}\label{prop:slit-approx}
  Let $f \co R \to S$ be a topological slit map with respect to
  $F_S \in \MF^+(S)$. Let $F_R = f^* F_S$. Then there
  is a sequence of simple multi-curves $C_n$ on $R$ and weights
  $\lambda_n$ so that
  \begin{align*}
  \lim_{n \to \infty}\lambda_n F[C_n] &= F_R\\
  \lim_{n \to \infty}\lambda_n F[f(C_n)] &= F_S.
  \end{align*}
\end{proposition}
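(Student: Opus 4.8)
The plan is to produce a \emph{single} taut train track that carries $F_R$ on $R$ and that, regarded inside $S$ via the slit inclusion, also carries $F_S$ on $S$; the two approximations then drop out of Lemma~\ref{lem:tt-continuous} applied on $R$ and on $S$ at once. Recall that since $f$ is a slit map we may identify $R^\circ$ with an open subset of $S^\circ$, identify $f$ with this inclusion, and identify $F_R = f^*F_S$ with $F_S|_{R^\circ}$ (extended to be tangent to the slit along the new boundary). In particular no leaf of $F_S$ ever crosses the slit, since the slit is itself a union of leaves of $F_S$.

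First I would build the train track. Run the construction of Lemma~\ref{lem:MF-carried} for $F_R$ on $R$, but choose the system of transverse intervals $I = \bigcup_j I_j$ used there to be disjoint from the slit: this is possible because the slit is a finite union of leaf segments, so a transverse system for $F_R$ can be pushed off it and, if necessary, supplemented by finitely many short transverse intervals near the slit tips and near the singularities of $F_S$ on the slit, so as still to cut every leaf of $F_R$. The leaf segments of $F_R$ lie in $R^\circ$, so the resulting taut train track $T$ lies in $R^\circ$ and hence sits in $S^\circ$ as well, disjoint from the slit. Using the extra freedom in Lemma~\ref{lem:MF-carried}, I would also arrange that the complementary component(s) of $T$ meeting the slit-boundary of $R$ have at least as many cusps as $F_R$ has zeros there (each slit tip being a three-pronged boundary zero of $F_R$).

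The heart of the matter is the claim that $T$, viewed as a train track on $S$, is taut and carries $F_S$, necessarily with the \emph{same} weight $w_0 \in \Meas(T)$ for which $T(w_0) = F_R$ on $R$. The complementary regions of $T$ in $S$ are obtained from those of $T$ in $R$ by regluing along the slit; since $I$ avoids the slit, the slit lies in the spines of these regions, so regluing only enlarges them and cannot create a disk with no cusps or one cusp, while the extra-cusp condition rules out once-punctured monogons. Hence $T$ is taut in $S$. Since $F_S|_{R^\circ} = F_R = T(w_0)|_{R^\circ}$, the foliation $F_S$ assigns width $w_0(e)$ to each edge $e$ of $T$, so the only remaining point is that $F_S$ restricted to each complementary region of $T$ in $S$ has no recurrence, i.e.\ deformation-retracts to a spine. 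A leaf of $F_S$ meeting such a region, restricted to $R$, lies entirely in one of the (disjoint, in $R$) complementary regions of $T$ in $R$ — on each of which $F_R$ is non-recurrent because $T$ carries $F_R$ — unless it runs through a slit tip or a singularity on the slit, in which case it breaks into finitely many such pieces; in every case it is non-recurrent. Collapsing the slit segments (leaf segments of $F_S$, possibly saddle connections) by Whitehead moves returns each reglued region to the trivial picture. So $T$ carries $F_S$ on $S$; that is, $T(w_0) = F_S$ in $\MF^+(S)$.

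Finally I would approximate inside $\Meas(T)$ exactly as in Proposition~\ref{prop:curves-dense}: pick $w_n \in \Meas_\QQ(T)$ with $w_n \to w_0$, write $w_n = \lambda_n w_n'$ with $w_n' \in \Meas_\ZZ(T)$, and set $C_n \coloneqq w_n'(T) \in \Curves^+(R)$, a simple multi-curve supported on $T \subset R^\circ$, so that $f(C_n)$ is literally the same multi-curve viewed in $S$. Then Lemma~\ref{lem:tt-continuous} on $R$ gives $\lambda_n F[C_n] = T(w_n) \to T(w_0) = F_R$ in $\MF^+(R)$, and on $S$ gives $\lambda_n F[f(C_n)] = T(w_n) \to T(w_0) = F_S$ in $\MF^+(S)$, which is exactly what is asked. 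I expect the main obstacle to be precisely the claim of the third paragraph — that regluing across the slit keeps $T$ taut and keeps $F_S$ carried, with the same weight — since this is where the hypothesis that the slit is a union of leaf segments really does the work (through the analysis of the reglued complementary regions and the Whitehead collapse of the slit), and where the cusp-control clause of Lemma~\ref{lem:MF-carried} is needed; the interval bookkeeping in the second paragraph is routine but must also be handled with some care. Note that nothing here contradicts Example~\ref{examp:erase-hole}: the train track $T$ is tailored to the particular pair $(F_S, \text{slit})$, so there is no pretense of a continuous extension of $f_*$ to all of $\MF^+(R)$.
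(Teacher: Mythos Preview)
Your proposal is correct and takes essentially the same approach as the paper: build a taut train track $T_R$ carrying $F_R$ on~$R$ via Lemma~\ref{lem:MF-carried} with the boundary-cusp control, push it forward to $T_S = f(T_R)$ on~$S$, check that $T_S$ is still taut and carries $F_S$ with the same weights, and then approximate by rational weights as in Proposition~\ref{prop:curves-dense}. The paper phrases the tautness check for $T_S$ as a short case analysis (if a boundary circle of~$R$ bounds a disk, resp.\ once-punctured disk, in~$S$, the enclosed slit is a tree with at least two, resp.\ one, tips off the puncture, so $F_R$ has enough boundary zeros and hence $T_S$ has enough cusps), which is exactly the content of your ``extra-cusp condition'' argument---note that this condition is needed for the disk case as well as the once-punctured case, not only the latter.
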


\begin{proof}
  By Lemma~\ref{lem:MF-carried}, $[F_R] = T_R(w)$ for some weight~$w$
  on a taut train
  track $T_R$ on~$R$. Fix a boundary component~$B$ of~$R$, and let
  $\beta$ be a curve parallel to~$B$ slightly pushed in to~$R$.
  If $f(\beta)$ bounds a disk in~$S$, the corresponding slit of~$F_S$
  is a tree which must have at least two endpoints. Each endpoint of
  the tree contributes a zero to $F_R$ on~$B$, so $F_R$ has at least
  two zeros on~$B$.

  Likewise,
  if $f(\beta)$ bounds a once-punctured disk in~$S$, the corresponding
  slit of~$F_S$ is a tree with at least two endpoints. At most one of
  these endpoints may be at the puncture, so $F_R$
  has at least one zero on~$B$.

  Let $T_S = f(T_R)$. The second part of the statement of
  Lemma~\ref{lem:MF-carried}
  guarantees that $T_S$ is taut, and so $F_S = T_S(w)$. (The new disks
  in~$T_S$ that were not
  disks in~$T_R$ have at least two cusps, and the new once-punctured disks
  have at least one cusp.)

  As in the proof of Proposition~\ref{prop:curves-dense},
  choose a sequence of rational weights $w_n \in
  \Meas_{\QQ}(T_R)$ approaching~$w$, and choose scalars~$\lambda_n$ so
  that $w_n' \coloneqq w_n/\lambda_n$ is integral. Then
  $T_R(w_n')$ is a multi-curve $[C_n]$ with $\lambda_n [C_n]$
  approaching $[F_R]$.
  We also have $[f(C_n)] = T_S(w_n')$, so by
  Lemma~\ref{lem:tt-continuous}, $\lambda_n [f(C_n)]$ approaches
  $F_S$.
\end{proof}

\begin{proof}[Proof of Theorem~\ref{thm:emb}]
  If $S = \bigcup_i S_i$ is not connected, with $R_i = f^{-1}(S_i)$,
  then the stretch factor is
  a supremum over all embeddings $R_i \hookrightarrow S_i$, as
  $\frac{a+b}{c+d} < \max\bigl(\frac{a}{c} + \frac{b}{d}\bigr)$. On
  the
  other hand, $R$
  conformally embeds in $S$ iff $R_i$ conformally
  embeds in $S_i$ in the given homotopy class for all~$i$. So from now
  on we assume that $S$ is
  connected.

  If $f\co R \hookrightarrow S$ is homotopic to a
  conformal embedding, then Proposition~\ref{prop:el-total-el}
  guarantees that for
  all multi-curves $[C] \in \Curves^+(R)$, we have $\EL_S[f(C)] \le
  \EL_R[C]$, as we have more maps in computing $\EL_S[f(C)]$, so
  smaller infimum in Equation~\eqref{eq:el-total-el}. Thus $\SF[f] \le 1$.

  Conversely, suppose $f$ is not homotopic to a conformal
  embedding. Then by Theorem~\ref{thm:ioffe}, 
  $f$ is homotopic to a Teichmüller map~$g$ of dilatation~$K$ with respect
  to quadratic
  differentials $q_R \in \Quad^+(R)$ and~$q_S \in \Quad^+(S)$.  Apply
  Proposition~\ref{prop:slit-approx} to find a sequence of simple
  multi-curves
  $C_n$ on~$R$ and weights~$\lambda_n$ so that $\lambda_n[C_n]$
  approximates $\Fh(q_R)$ and $\lambda_n[f_*C_n]$ approximates
  $\Fh(q_S)$. By Theorem~\ref{thm:heights}, the quadratic differentials
  corresponding to $\lambda_n[C_n]$ approach $q_R$ and the quadratic
  differentials corresponding to $\lambda_n [f(C_n)]$ approach
  $q_S$, and therefore
  \begin{equation}\label{eq:SF-bound}
  \SF[f] \ge 
  \lim_{n \to \infty} \frac{\EL_S[f(C_n)]}{\EL_R[C_n]} = 
  \frac{\EL_S[\Fh(q_S)]}{\EL_R[\Fh(q_R)]} = 
  \frac{\norm{q_S}}{\norm{q_R}} = K > 1.\qedhere
  \end{equation}
\end{proof}

When the stretch factor is larger than~$1$, we find it exactly
(Proposition~\ref{prop:sf-qc}) with the following standard fact.

\begin{lemma}\label{lem:qc-sf}
  Let $f \co R \hookrightarrow S$ be a quasi-conformal embedding of
  Riemann surfaces with quasi-conformal constant $\le K$, and let $C$
  be any multi-curve on~$R$. Then
  \[
  \EL_S[f(C)] \le K \EL_R[C].
  \]
\end{lemma}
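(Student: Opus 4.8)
The plan is to use the definition of extremal length as a supremum over conformal metrics (Equation~\eqref{eq:el-sup}), and to transport an extremal metric for $S$ back to $R$ via $f$, controlling the distortion of length and area by the quasi-conformal constant. First I would set up the notation: fix a metric $g_S$ in the conformal class on $S$ and a Borel conformal scale factor $\rho_S \co S \to \RR_{\ge 0}$ of finite positive area, so that $\ell_{\rho_S g_S}[f(C)]^2 / A_{\rho_S g_S}(S)$ is close to the supremum $\EL_S[f(C)]$. Since it suffices to prove the inequality for the restriction of $\rho_S g_S$ to the image $f(R^\circ)$ (this only decreases the area, hence increases the Rayleigh quotient, and the length term is computed along curves lying in $f(R)$ anyway — one should be a little careful here and instead just note that restricting $\rho_S$ to $f(R)$ gives a valid test metric whose quotient dominates the one we want), I can assume the test metric is supported on $f(R)$.

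Next I would pull back. A $K$-quasi-conformal homeomorphism onto its image distorts the conformal structure by a bounded amount: in local isothermal coordinates $f$ has Beltrami coefficient $\mu$ with $\|\mu\|_\infty \le (K-1)/(K+1)$. Pulling back the metric $\rho_S g_S$ along $f$ gives a (measurable) conformal metric $h$ on $R$ — conformal with respect to the structure on $R$ up to the distortion of $f$ — more precisely, I would define $\rho_R$ on $R$ by pulling back $\rho_S$ and absorbing the $|f_z|, |f_{\bar z}|$ factors, so that $\rho_R g_R$ and $f^*(\rho_S g_S)$ are comparable metrics. The two standard estimates are: (i) for any rectifiable curve $\gamma$ in $R$, the $f^*(\rho_S g_S)$-length of $\gamma$ equals the $\rho_S g_S$-length of $f\circ\gamma$, so $\ell_{\rho_R g_R}[C]$ differs from $\ell_{\rho_S g_S}[f(C)]$ by at most a factor controlled by $K$; and (ii) the $f^*(\rho_S g_S)$-area of $R$ equals the $\rho_S g_S$-area of $f(R)$, so $A_{\rho_R g_R}(R)$ and $A_{\rho_S g_S}(f(R)) \le A_{\rho_S g_S}(S)$ differ by at most a factor controlled by $K$. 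Combining, and being careful to arrange the $K$'s so they land on the correct side, gives
\[
\EL_R[C] \ge \frac{\ell_{\rho_R g_R}[C]^2}{A_{\rho_R g_R}(R)} \ge \frac{1}{K}\cdot\frac{\ell_{\rho_S g_S}[f(C)]^2}{A_{\rho_S g_S}(S)}.
\]
Taking the supremum over $\rho_S$ yields $\EL_R[C] \ge \EL_S[f(C)]/K$, which is the claim.

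The main obstacle, I expect, is the bookkeeping of the quasi-conformal distortion factors: naively pulling back the metric multiplies lengths by $|f_z| + |f_{\bar z}|$ and areas by $|f_z|^2 - |f_{\bar z}|^2$, and the ratio $(|f_z|+|f_{\bar z}|)^2 / (|f_z|^2 - |f_{\bar z}|^2) = (|f_z|+|f_{\bar z}|)/(|f_z|-|f_{\bar z}|)$ is exactly the local dilatation, which is $\le K$. So the pulled-back test metric $\rho_R$ must be chosen as $\rho_S\circ f$ times a judicious power of these derivative factors (essentially, one wants $\rho_R^2\, dA_{g_R} = \rho_S^2\circ f \cdot (|f_z|^2 - |f_{\bar z}|^2)\, dA_{g_R}$ for the area, while for lengths one uses $\rho_R \ge \rho_S\circ f \cdot (|f_z| - |f_{\bar z}|)$ along curves, possibly after choosing $C$-representatives adapted to the horizontal/vertical structure). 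Making this precise without circular reasoning — and handling the measurability and the fact that $f$ need only be quasi-conformal, not smooth — is the delicate part, but it is entirely standard; alternatively one can invoke the well-known fact that a $K$-quasi-conformal map changes extremal length (equivalently, modulus) of any curve family by a factor in $[1/K, K]$, and apply it to the family of curves in the homotopy class $[C]$, which is the cleanest route and the one I would take in the write-up.
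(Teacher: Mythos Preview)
Your proposal is correct and follows the standard argument. The paper itself does not prove this lemma in the compiled text, calling it a ``standard fact''; the commented-out proof sketch in the source (inside an \texttt{\textbackslash iffalse} block) takes exactly your route---pull back the optimal metric for $\EL_S[f(C)]$ to~$R$, scale by the larger singular value of $Df$, and use that $(|f_z|+|f_{\bar z}|)^2 \le K(|f_z|^2-|f_{\bar z}|^2)$ pointwise to bound the area ratio by~$K$ while lengths are non-decreasing. Your final suggestion, to simply invoke the classical fact that $K$-quasi-conformal maps distort the modulus of any curve family by at most a factor of~$K$, is the cleanest write-up and is evidently what the authors had in mind.
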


\begin{proof}[Proof of Proposition~\ref{prop:sf-qc}]
  We can again assume that $S$ is connected. If $\SF[f] = 1$, the
  result is trivial: By Theorem~\ref{thm:emb}, there is a conformal
  embedding, which has quasi-conformal constant equal to~$1$.
  If $\SF[f] > 1$, then by Theorem~\ref{thm:emb}, the map~$f$ is not
  homotopic to a conformal embedding. Ioffe's
  Theorem~\ref{thm:ioffe} constructs a $K$-quasi-conformal
  map. $\SF[f] \le K$ by Lemma~\ref{lem:qc-sf}, and $\SF[f] \ge
  K$ by Equation~\eqref{eq:SF-bound}.
\end{proof}


\section{Strict embeddings}
\label{sec:strict}

We now turn to Theorem~\ref{thm:strict-emb}, on embeddings with stretch
factor strictly less than~$1$. We start with some preliminary lemmas.

\begin{lemma}\label{lem:area-bound}
  Let $f \co R \hookrightarrow S$ be a strict conformal
  embedding. Then there is a constant $K<1$ so that for any $q \in
  \Quad^+(S)$,
  \[
  A_q(f(R)) \le K A_q(S).
  \]
\end{lemma}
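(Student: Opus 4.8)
The plan is to argue by contradiction, using a compactness argument on the unit sphere of $\Quad^+(S)$. Suppose no such $K<1$ exists. Then there is a sequence $q_n \in \Quad^+(S)$ with $\norm{q_n} = A_{q_n}(S) = 1$ such that $A_{q_n}(f(R)) \to 1$, i.e. $A_{q_n}(S \setminus f(R)) \to 0$. The key point is that $\Quad^+(S)$ is \emph{not} finite-dimensional, so I cannot directly extract a convergent subsequence in $\Quad^+(S)$; instead I would pass to the associated measured foliations. Since $f$ is strict, its image omits a non-empty open set $U \subset S$; shrinking $U$, I may assume $U$ is an embedded open disk (or a round metric ball in some fixed background metric $g$ on $S$) that is disjoint from $f(R)$.

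First I would set up the local estimate. Fix a small coordinate disk $U = \{\abs{z} < 2\}$ with $\{\abs{z}\le 1\} \cap f(R) = \emptyset$. For $q = \phi(z)\,(dz)^2 \in \Quad^+(S)$, holomorphicity of $\phi$ on $U$ gives a sub-mean-value-type inequality: the $\abs{q}$-area of the annulus $\{1 \le \abs{z} \le 2\}$ controls the $\abs{q}$-area of $\{\abs{z}\le 1\}$ from below by a definite multiplicative constant. Concretely, $\iint_{\abs{z}\le 1} \abs{\phi}\,dA$ is bounded below in terms of $\iint_{1\le\abs{z}\le2}\abs{\phi}\,dA$ because $\log\abs{\phi}$ is subharmonic (where $\phi \ne 0$), or more simply because $\sqrt{\abs{\phi}}$ lies in a normal family after normalizing; a clean way is to note $(\iint_{\abs z\le 1}\sqrt{\abs\phi}\,dA)^2 \le \pi \iint_{\abs z \le 1}\abs\phi\,dA$ and that the $L^1$ norm of the holomorphic function $\phi$ on the smaller disk is comparable to that on the annulus. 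The upshot: there is a constant $c > 0$, depending only on $S$, $g$, and the chosen $U$, with
\[
A_q(\{\abs z \le 1\}) \ge c \cdot A_q(\{1 \le \abs z \le 2\})
\quad\text{for all } q \in \Quad^+(S).
\]

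Now I would combine this with the hypothesis. Since $\{\abs z \le 1\}$ is disjoint from $f(R)$, we have $A_q(\{\abs z \le 1\}) \le A_q(S \setminus f(R))$, hence
\[
A_q(S) = A_q(f(R)) + A_q(S\setminus f(R))
\le A_q(f(R)) + A_q(\{\abs z\le 1\}) + A_q(S \setminus (f(R) \cup \{\abs z \le 1\})).
\]
The middle and last terms together must be at least $c/(1+c)$ times $A_q(S)$ once one checks the annulus $\{1\le\abs z\le 2\}$ also lies in $S \setminus f(R)$ — wait, it need not, so instead I would argue directly: $A_q(S\setminus f(R)) \ge A_q(\{\abs z \le 1\}) \ge c\, A_q(\{1\le\abs z\le 2\})$, and I want to bound $A_q(\{\abs z \le 1\})$ below by a definite fraction of $A_q(S)$. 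This requires the annulus to capture a definite fraction of the total area, which is false in general (mass can escape to the complement of $U$). The honest fix, and the step I expect to be the main obstacle, is to replace the single disk $U$ with a finite cover of $S$ by coordinate disks $U_1,\dots,U_N$ and apply the local estimate on a fixed disk $U_j$ against the \emph{neighboring} disks, chaining the inequalities so that $A_q(\{\abs z\le1\}_{U_{j_0}})$ — living in the omitted set — is bounded below by a definite multiple of $A_q$ of every $U_j$, hence of $A_q(S)$. Alternatively, and more cleanly, I would invoke compactness: the map $q \mapsto A_q(f(R))/A_q(S)$ descends to a function on $\PP\Quad^+(S)$ that is upper semicontinuous in the foliation topology (using continuity of $q_F$ in $F$ from Theorem~\ref{thm:heights} and that $A_q$ restricted to an open set is lower semicontinuous under the relevant convergence), and is $< 1$ pointwise by strictness (a non-zero holomorphic quadratic differential cannot vanish on the open omitted set). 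Then I would extract a maximizer over the compact projectivized space $\PP\MF^+(S)$ and conclude the supremum $K < 1$. The technical heart is justifying the semicontinuity of $q \mapsto A_q(\text{open set})$ along convergent sequences of foliations; this follows from local uniform convergence of the $\phi_n$ on compact subsets avoiding the zeros, together with control of the simple poles at punctures coming from the uniform bound $\norm{q_n}=1$.
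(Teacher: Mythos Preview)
Your central error is the assertion that $\Quad^+(S)$ is infinite-dimensional. It is not: by the paper's definitions (Section~\ref{sec:riemann}), $\Quad^+(S)$ is the cone of quadratic differentials that are real and non-negative along $\partial S$, and this sits inside the \emph{finite-dimensional} real vector space $\Quad^\RR(S)$. (It is $\Quad(S)$, with no boundary condition, that is infinite-dimensional.) Once you have this, the paper's proof is a two-line compactness argument: the ratio $q \mapsto A_q(f(R))/A_q(S)$ is continuous on the compact set $P\Quad^+(S)$ and is strictly less than~$1$ at every point (a non-zero holomorphic $q$ cannot vanish on the open set omitted by~$f$), so its supremum is attained and is some $K<1$.

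Your ``cleaner'' alternative---passing to $P\MF^+(S)$ via the heights theorem and arguing semicontinuity---would eventually land on the same compact space, since Theorem~\ref{thm:heights} identifies $\Quad^+(S)$ with $\MF^+(S)$; but the detour and the semicontinuity worries are unnecessary. Your first approach via sub-mean-value estimates and chaining over a finite cover of coordinate disks is the right instinct for the genuinely infinite-dimensional problem, and indeed the paper does something in this spirit later (Theorem~\ref{thm:area-surface}, which handles all of $\Quad(S)$ and is uniform under covers). But for the lemma as stated, over $\Quad^+(S)$, none of that machinery is needed.
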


\begin{proof}
  For any non-zero quadratic differential~$q$ on~$S$, the ratio
  $A_q(f(R))/A_q(S)$ is less than~$1$, as the open set
  missed by the image of~$f$ has some non-zero area with respect
  to~$q$. Then
  $A_q(f(R))/A_q(S)$ is a continuous function on the projective space
  $P\Quad^+(S)$. Since
  $P\Quad^+(S)$ is compact, the result follows.
\end{proof}

Later, in Theorem~\ref{thm:area-surface}, we will strengthen
Lemma~\ref{lem:area-bound} considerably.

\begin{lemma}\label{lem:annular-sf}
  Let $R$ be a compact Riemann surface with a quadratic
  differential~$q\in\Quad^+(R)$ that is strictly positive on $\partial
  R$.  Let
  $\widehat{R}_t$ be the annular extension of~$R$ obtained by gluing a
  Euclidean cylinder of width~$t$ onto each boundary component of~$R$
  with respect to the locally Euclidean metric given by~$q$.  Then
  \[
  \lim_{t \to 0} \SF[\widehat{R}_t\to R] = 1,
  \]
where $\SF[\widehat{R}_t\to R]$ is the stretch factor of the
obvious homotopy class of topological embeddings.
\end{lemma}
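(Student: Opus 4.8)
The plan is to show that as $t\to0$, the annular extension $\widehat R_t$ is "close" to $R$ in a way that forces every extremal-length ratio to converge to $1$. First I would bound $\SF[\widehat R_t\to R]$ from below by $1$: the inclusion $\widehat R_t\hookrightarrow R$ is an embedding onto a subsurface isotopic to the image of $R$, and since $R$ conformally embeds in $\widehat R_t$ (it is a subsurface) in the reverse direction, Proposition~\ref{prop:el-total-el} gives $\EL_R[C]\le\EL_{\widehat R_t}[C]$ for closed multi-curves, whence $\SF[\widehat R_t\to R]\ge\SF[\mathrm{id}]=1$ — actually more directly, any multi-curve on $\widehat R_t$ that lives (up to isotopy) in the $R$-part has the same image, and collared annuli only help, so the ratio is $\ge1$. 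The substance is the upper bound $\limsup_{t\to0}\SF[\widehat R_t\to R]\le1$.

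For the upper bound I would argue metrically using $\abs q$. Fix a closed multi-curve $C$ on $\widehat R_t$ with $\EL_{\widehat R_t}[C]\ne0$; write its image in $R$ also as $C$. Use the singular flat metric $\abs q$ on $R$ as the base metric for the definition~\eqref{eq:el-sup} of $\EL_R[C]$, and on $\widehat R_t$ use the metric $\abs q$ extended by the glued Euclidean cylinders of width $t$; call the latter metric $g_t$. The key point is that $g_t$ restricted to the $R$-part is exactly $\abs q$, and $\Area_{g_t}(\widehat R_t)=\Area_{\abs q}(R)+t\cdot\ell$, where $\ell=\ell_{\abs q}(\partial R)$ is the total $q$-length of the boundary, a fixed finite constant. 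I would take any competitor conformal factor $\rho$ on $\widehat R_t$; restricting $\rho$ to $R\subset\widehat R_t$ gives a competitor for $\EL_R[C]$, and $\ell_{\rho g_t}[C_1]\ge\ell_{(\rho|_R)\abs q}[C_1']$ for suitable representatives since any representative of $[C]$ in $\widehat R_t$ can be homotoped into the $R$-part (its isotopy class is supported there, as the collars are annuli parallel to $\partial R$). Hence $\EL_{\widehat R_t}[C]\le\EL_R[C]\cdot\bigl(\Area_{g_t}(\widehat R_t)/\Area_{\abs q}(R)\bigr)=\EL_R[C]\cdot(1+t\ell/\Area_{\abs q}(R))$ — wait, this needs care because $\rho$ need not be $\abs q$, so instead I would run the argument through Proposition~\ref{prop:el-area}/Theorem~\ref{thm:quad-diff-height}: let $q_C$ be the Jenkins--Strebel differential on $\widehat R_t$ for $C$, so $\EL_{\widehat R_t}[C]=A_{\abs{q_C}}(\widehat R_t)$; its annuli $A_i$ give, after removing the glued collars, disjoint embedded annuli in $R$ around the $C_i$, yielding by Proposition~\ref{prop:el-total-el} that $\EL_R[C]\ge\sum a_i^2\EL(A_i\cap R)$. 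Comparing $\EL(A_i)$ with $\EL(A_i\cap R)$ for a Euclidean cylinder from which one shaves collars of total width at most $2t$ (at most two boundary components of $\widehat R_t$ meet $A_i$) gives $\EL(A_i\cap R)\ge\EL(A_i)-O(t/a_i^2)$ in the relevant normalization, uniformly in $C$ after the standard normalization $\sum a_i^2\EL(A_i)=1$; summing gives $\EL_R[C]\ge\EL_{\widehat R_t}[C]-O(t)$, i.e.\ $\SF[\widehat R_t\to R]\le 1/(1-O(t))\to1$.

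The main obstacle is making the geometric comparison between $\EL(A_i)$ and $\EL(A_i\cap R)$ \emph{uniform over all multi-curves $C$} — a single cylinder can be very thin (large $\EL(A_i)$) or the core curve can be very short in $\abs q$, so shaving a collar of width $t$ could in principle change the modulus by a large factor. I would handle this by normalizing: rescale so that $\sum_i a_i^2\EL(A_i)=1$; then the total $\abs{q_C}$-area is $1$ and each annulus $A_i$ has $\abs{q_C}$-area $a_i^2\EL(A_i)\le1$ and circumference (the $\abs{q_C}$-length of its core) bounded below by the $\abs q$-length of the corresponding geodesic in $R$, which has a positive lower bound $\delta>0$ over all essential simple closed curves on the \emph{compact} surface $R$ (finitely many isotopy classes realize the systole-type minimum; more precisely the flat systole of $(R,\abs q)$ is positive). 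Combined with $q$ being strictly positive on $\partial R$ — so the collars are genuinely $2$-dimensional and the "width $t$ in the $\abs q$-metric" is comparable to Euclidean width — removing a collar changes each $A_i$'s area by at most (circumference)$\cdot t\le (\mathrm{const})\cdot t$ and its core length not at all (cores stay in $R$), so $\EL(A_i\cap R)/\EL(A_i)\ge 1-Ct/\delta$ uniformly. That uniform estimate closes the argument; the rest is the bookkeeping of Proposition~\ref{prop:el-total-el} and Proposition~\ref{prop:el-area} already in hand.
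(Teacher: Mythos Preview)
Your approach has a genuine gap at the key uniform estimate, and it stems from conflating two different flat metrics. The collars have width~$t$ in the fixed metric $\abs{q}$, while the Jenkins--Strebel annuli $A_i$ are flat cylinders in the metric $\abs{q_C}$, which depends on~$C$ and on~$t$. Your claim that ``removing a collar changes each $A_i$'s area by at most $(\text{circumference})\cdot t$'' multiplies a $\abs{q_C}$-circumference by a $\abs{q}$-width; these are incommensurable without a pointwise comparison $\abs{q_C}\le M\abs{q}$, and no such bound is available uniformly in~$C$ (the Jenkins--Strebel differential can concentrate near~$\partial R$). Likewise the assertion that the $\abs{q_C}$-circumference of a core is bounded below by the $\abs{q}$-systole of~$R$ compares lengths in two unrelated metrics. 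Finally, $A_i\cap R$ need not be an annulus at all: the Jenkins--Strebel foliation on $\widehat R_t$ has no reason to respect the collar decomposition, so a single $A_i$ can cross a collar many times and $A_i\cap R$ may be a disjoint union of rectangles.

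The paper sidesteps all of this by invoking Proposition~\ref{prop:sf-qc}: since $\SF[\widehat R_t\to R]\ge 1$, the stretch factor equals the minimal quasi-conformal constant of any embedding in the homotopy class. One then writes down an explicit $K_t$-quasi-conformal homeomorphism $\widehat R_t\to R$ with $K_t\to 1$: because $q$ is strictly positive on $\partial R$, each boundary component has a genuine flat collar $A_i$ of some width $w_i>0$ in $\abs{q}$, and the map that affinely compresses the enlarged collar $A_i\cup B_{i,t}$ (of width $w_i+t$) onto $A_i$ and is the identity elsewhere has dilatation exactly $\max_i (w_i+t)/w_i$. This handles all multi-curves at once and avoids any curve-by-curve comparison of extremal lengths; the reduction to a single quasi-conformal map is the step your argument is missing.
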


\begin{proof}
  By Proposition~\ref{prop:sf-qc}, it suffices to construct a family of
  quasi-conformal maps
  $f_t\co \widehat{R}_t \to R$ with quasi-conformal constant $K_t$
  that approaches~$1$ as $t$ approaches~$0$.  The assumption that $q$
  is positive on $\partial
  R$ guarantees that near each component~$C_i$ of $\bdy R$ there is an
  annulus $A_i$ foliated by leaves of~$\Fh(q)$, with circumference
  $r_i$ and width $w_i$ (with
  respect to the Euclidean metric induced
  by~$q$).  Let $B_{i,t}$ be the
  annulus added to this boundary component in $\widehat{R}_t$, and
  let $\iota_t\co \widehat{R}_t \to R$ be the affine map of
  $A_i\cup B_{i,t}$ onto $A_i$
  and the identity outside of $A_i \cup B_{i,t}$. Then $\iota_t$
  has quasi-conformal constant equal to
  \[
  \max_i \frac{w_i + t}{w_i},
  \]
  which goes to~$1$ as $t \to 0$ as desired.
\end{proof}

\begin{proof}[Proof of Theorem~\ref{thm:strict-emb}]
  \textbf{(\ref{item:strict-annular}) $\Rightarrow$
    (\ref{item:strict-strict}):} An annular conformal embedding is
  also a strict conformal embedding, so this is clear.

  \textbf{(\ref{item:strict-strict}) $\Rightarrow$
    (\ref{item:strict-sf}):} Suppose that $f$ is a strict conformal
  embedding, and let $K < 1$ be the constant from
  Lemma~\ref{lem:area-bound}. For any multi-curve
  $[C]\in\Curves^+(R)$, let $q\in\Quad^+(S)$ be the quadratic
  differential that realizes extremal length for $[f(C)]$, and
  consider the pull-back metric $\mu = f^*\abs{q}$ on~$R$. 
  Since $(R, \mu)$ and $(f(R), \abs{q})$ are isometric, but there are
  more curves in the homotopy class~$[C]$ on~$S$ than those that lie in
  $f(R)$, we have
  $\ell_\mu[C] \ge \ell_{\abs{q}}[f(C)]$. Therefore,
  \begin{align*}
    \EL_R[C] &\ge \frac{\ell_\mu[C]^2}{A_\mu(R)} \ge
             \frac{\ell_{\abs{q}}[f(C)]^2}{KA_{\abs{q}}(S)}
             = K^{-1} \EL_S[f(C)]
  \end{align*}
  implying that $\frac{\EL[f(C)]}{\EL[C]} \le K$. Since $C$ was
  arbitrary, $\SF[f] \le K$.

  \textbf{(\ref{item:strict-sf}) $\Rightarrow$
    (\ref{item:strict-annular}):} Suppose that $\SF[f] < 1$.  Pick a
  quadratic differential~$q\in\Quad^+(R)$ that is real and strictly positive on
  $\partial R$.  Let $\widehat{R}_t$ be the family of annular
  extensions of~$R$ with respect to~$q$ as in Lemma~\ref{lem:annular-sf}, and
  let $\widehat{f}_t:
  \widehat{R}_t \to S$ be the composite topological embeddings.
  Then by Proposition~\ref{prop:sf-compose},
  \[
  \SF[\widehat{f}_t] \le \SF[\widehat{R}_t\to R] \cdot \SF[f].
  \]
  It follows from Lemma~\ref{lem:annular-sf} that for $t$
  sufficiently small, $\SF[\widehat{f}_t] \le 1$, so by
  Theorem~\ref{thm:emb}, $\widehat{f}_t$ is homotopic to a
  conformal embedding.

  \textbf{(\ref{item:strict-sf}) $\Leftrightarrow$
    (\ref{item:strict-ball}):} This is a consequence of
  Proposition~\ref{prop:sf-dist}, which we prove next.
\end{proof}

\begin{proof}[Proof of Proposition~\ref{prop:sf-dist}]
  By compactness of balls in Teichmüller space, it suffices to show,
  on one hand, that if $d(S,S') < -\frac{1}{2}\log\SF[f]$, then there
  is a conformal embedding of $R$ in~$S'$; and, on the other hand,
  that there are surfaces $S'$ with $d(S,S')$ arbitrarily close to
  $-\frac{1}{2}\log\SF[f]$ so that $R$ does not conformally embed
  in~$S'$.
  
  For the first part, suppose
  $d(S, S') < -\frac{1}{2}\log \SF[f]$. Let $\id_{S,S'}$ be the
  identity map from the marking. Then
  \begin{align*}
    \SF[\id_{S,S'} \circ f] \le \SF[f] \cdot \SF[\id_{S,S'}] 
                       \le \SF[f] \cdot \SF^{\pm}[\id_{S,S'}] 
                      = \SF[f] \cdot e^{2d(S,S')}
                       < 1.
  \end{align*}
  as desired.

  To get the other direction of the inequality, pick $\epsilon > 0$,
  and set $K = e^{\epsilon}/\SF[f]$ and $\lambda = \frac{K-1}{K+1}$.
  Find a simple multi-curve~$C$ on~$R$ near the
  supremum defining $\SF$:
  \[
    \frac{\EL_S[f(C)]}{\EL_R[C]} > e^{-\epsilon} \SF[f].
  \]
  Let $q = q_{f(C)}\in \Quad^+(S)$ be the associated Jenkins-Strebel
  quadratic differential, and set
  $\mu = \lambda \cdot {\overline{q}}/{\abs{q}}$ to be an associated
  Beltrami differential. Let $S'$ be $S$ stretched by~$\mu$, so that
  \[
    d(S,S') = \frac{1}{2}\log \SF^\pm[\id_{S,S'}]
    = \frac{1}{2}\log \SF[\id_{S,S'}]
    = \frac{1}{2}\frac{\EL_{S'}[f(C)]}{\EL_S[f(C)]}
    = \frac{\log K}{2}
    = -\frac{1}{2}\log \SF[f] + \frac{\epsilon}{2}.
  \]
  We also have
   \[
     \SF[\id_{S,S'} \circ f]
     \ge \frac{\EL_{S'}[f(C)]}{\EL_R[C]}
     > \bigl(e^{\epsilon}/\SF[f]\bigr)\bigl(e^{-\epsilon}\SF[f]\bigr)
     > 1
   \]
 so $S' \notin \Teich_R(S)$.
 Since $\epsilon$ can be chosen arbitrarily small, we get the
 desired result.
\end{proof}

\begin{remark}
  It follows from the proof that the
  stretching to a nearest point on $\partial\Teich_R(S)$ is
  horizontal on the boundary.
\end{remark}

\begin{remark}
  The nearest point to~$S$ on $\partial\Teich_R(S)$ is not always
  unique, as we can see from the fact that $\wt\SF[f] \ne \SF[f]$ in
  Examples~\ref{examp:cover} and~\ref{examp:cover2} below.
  Indeed, let $f \co R \hookrightarrow S$ be a conformal embedding and
  let $\wt f \co \wt R \hookrightarrow \wt S$ be a regular covering
  with $\SF[f] < \SF[\wt f] < 1$. Then if there were a unique nearest
  point~$\wt S'$ to $\wt S$ on $\partial \Teich_{\wt R}(\wt S)$, it
  would be invariant under the deck transformations, and so would
  descend to give a point~$S'$ on $\partial\Teich_R(S)$, contradicting
  $\SF[f] < \SF[\wt f]$.
\end{remark}


\section{Behavior under finite covers}
\label{sec:covers}

We now turn to the behavior of the stretch factor under finite covers. We
start with some easy statements.

\begin{lemma}\label{lem:el-cover}
  Let $\pi \co \wt{S} \to S$ be a covering map of Riemann surfaces of
  finite degree~$d$. For $C$
  a weighted multi-curve on~$S$, define $\pi^{-1} C$ to be the
  full inverse image of~$C$, with the same weights. Then
  $\EL_{\wt{S}}[\pi^{-1} C] = d\EL_S[C]$.
\end{lemma}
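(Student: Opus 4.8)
The plan is to prove the two inequalities $\EL_{\widetilde S}[\pi^{-1}C]\le d\,\EL_S[C]$ and $\EL_{\widetilde S}[\pi^{-1}C]\ge d\,\EL_S[C]$ separately: the first by pushing a conformal metric on $\widetilde S$ down to $S$, the second by pulling the extremal metric on $S$ back up to $\widetilde S$. First note that $\pi^{-1}$ of a disk (resp.\ once-punctured disk) in $S$ is a disjoint union of disks (resp.\ once-punctured disks) in $\widetilde S$, so the full preimage of a trivial component of $C$ is trivial; since trivial components affect neither side, we may assume $C$ has no trivial components.

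For the upper bound, let $\widetilde\rho$ be any Borel conformal scale factor on $\widetilde S$ of finite positive area, and define $\rho$ on $S$ by $\rho(x)^2=\sum_{y\in\pi^{-1}(x)}\widetilde\rho(y)^2$. Then $A_\rho(S)=A_{\widetilde\rho}(\widetilde S)$, and if $\gamma$ is any rectifiable representative of $[C]$, then $\pi^{-1}\gamma$ represents $[\pi^{-1}C]$ and the Cauchy--Schwarz inequality applied to the $d$ preimages of each point of $\gamma$ gives $\ell_{\widetilde\rho}(\pi^{-1}\gamma)\le\sqrt d\,\ell_\rho(\gamma)$ (used componentwise when $C$ is weighted). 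Taking the infimum over $\gamma$ and feeding into Equation~\eqref{eq:el-sup} yields $\ell_{\widetilde\rho}[\pi^{-1}C]^2/A_{\widetilde\rho}(\widetilde S)\le d\,\ell_\rho[C]^2/A_\rho(S)\le d\,\EL_S[C]$; now take the supremum over $\widetilde\rho$.

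For the lower bound (for closed $C$), use the Jenkins--Strebel differential $q=q_C$ from Theorem~\ref{thm:quad-diff-height}, with $\abs{q}$ as base metric on $S$ and $\pi^*\abs{q}$ on $\widetilde S$. Each flat cylinder $A_i$ of the $q$-decomposition, with core $\simeq C_i$, transverse measure $a_i$, and core length $\ell_i$, pulls back to a disjoint union of flat cylinders, one for each component of $\pi^{-1}C_i$; since a finite cover of a cylinder can only wrap around the core, each such piece covers $A_i$ with some degree $d_{ij}$ in the core direction and is therefore a flat cylinder of height $a_i$ and circumference $d_{ij}\ell_i$. As $\pi^*\abs{q}$ is locally $\mathrm{CAT}(0)$, these cores realize the lengths of their homotopy classes, so $\ell_{\pi^*\abs{q}}[\pi^{-1}C]=\sum_i a_i\sum_j d_{ij}\ell_i=d\sum_i a_i\ell_i=d\,\ell_{\abs{q}}[C]$, using $\sum_j d_{ij}=d$ for each $i$; meanwhile $A_{\pi^*\abs{q}}(\widetilde S)=d\,A_{\abs{q}}(S)$ because $\pi$ has degree $d$. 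Combining with $\ell_{\abs{q}}[C]^2/A_{\abs{q}}(S)=A_{\abs{q}}(S)=\EL_S[C]$ (as in the proof of Proposition~\ref{prop:el-area}) gives $\EL_{\widetilde S}[\pi^{-1}C]\ge d\,\EL_S[C]$. (Equivalently, one can observe that $\pi^*q_C$ is Jenkins--Strebel with this cylinder decomposition, hence $\pi^*q_C=q_{\pi^{-1}C}$ by uniqueness, and apply Proposition~\ref{prop:el-area} directly.) For $C$ with arc components one keeps the conformal-metric definition throughout, pulling back an arbitrary $\rho$; the one extra input is that the length of a free homotopy class is multiplicative under powers, $\ell_\rho[\gamma^n]=n\,\ell_\rho[\gamma]$.

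I expect the lower bound to be the main obstacle: its content is exactly that the extremal configuration upstairs is no cheaper than the pullback of the extremal configuration downstairs. For closed curves this is transparent because the Jenkins--Strebel metric is flat and its core curves are genuine geodesics whose lengths scale by the local covering degree; in the general case it reduces to $\ell_\rho[\gamma^n]=n\,\ell_\rho[\gamma]$ for a merely Borel conformal metric, which requires a surgery argument in the annular cover of $S$ associated to $\gamma$ --- a loop winding $n>1$ times must self-intersect, and a near-minimal such loop can be cut at its self-intersections into $n$ loops each winding once. A secondary, routine point is checking behavior at punctures and boundary: covers send punctures to punctures, $\pi^*$ turns an at-worst-simple pole into an at-worst-simple pole (so $\pi^*q_C\in\Quad^+(\widetilde S)$), and $\pi^*$ preserves the transverse measure of each cylinder.
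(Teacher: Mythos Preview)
Your proof is correct, but it is considerably more elaborate than the paper's. The paper's entire argument is your parenthetical remark: since $\pi^*q_C$ is a Jenkins--Strebel differential whose cylinders have cores isotopic to the components of $\pi^{-1}C$ with the correct transverse measures, the uniqueness clause of Theorem~\ref{thm:quad-diff-height} gives $\pi^*q_C=q_{\pi^{-1}C}$, and then Proposition~\ref{prop:el-area} yields $\EL_{\wt S}[\pi^{-1}C]=A_{\pi^*q_C}(\wt S)=d\,A_{q_C}(S)=d\,\EL_S[C]$ in one stroke. Your separate upper-bound argument via the pushed-down metric $\rho^2=\sum\widetilde\rho^2$ and Cauchy--Schwarz is a nice, more elementary route that avoids appealing to uniqueness of the Jenkins--Strebel differential, but it is redundant once you have the equality from Proposition~\ref{prop:el-area}. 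Your treatment of arc components goes beyond what the paper actually proves or needs (the lemma is only applied through the stretch factor, which is a supremum over $\Curves^+$).
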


\begin{proof}
  By Proposition~\ref{prop:el-area},
  $\EL_S[C] = A_{q_C}(S)$, where $q_C$ is the Jenkins-Strebel
  quadratic differential corresponding to~$C$. Then
  $f^*(q_C)$ is a Jenkins-Strebel quadratic differential
  corresponding to $\pi^{-1}(C)$, and so
  \[
  \EL_{\wt{S}}[\pi^{-1}(C)] = A_{f^*(q_C)}(S) = d A_{q_C}(S) = d \EL_S[C].\qedhere
  \]
\end{proof}

\begin{lemma}\label{lem:SF-cover-inc}
  For $\wt{f}$ a finite cover of $f\co R \hookrightarrow S$, we have
  $\SF[\wt{f}] \ge \SF[f]$.
\end{lemma}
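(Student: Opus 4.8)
The plan is to produce, for any simple multi-curve on $R$ that nearly achieves $\SF[f]$, a simple multi-curve on $\wt R$ whose extremal-length ratio under $\wt f$ is at least as large. The natural candidate is the full preimage $\wt C \coloneqq q^{-1}(C)$ under the covering map $q \co \wt R \to R$ from Definition~\ref{def:cover-map}. First I would observe that, because the diagram commutes, $\wt f(\wt C) = \wt f(q^{-1}(C))$ is exactly the full preimage $p^{-1}(f(C))$ of $f(C)$ under $p \co \wt S \to S$, where $p$ and $q$ have the same finite degree $d$. Then Lemma~\ref{lem:el-cover} applies on both sides: $\EL_{\wt R}[\wt C] = d\,\EL_R[C]$ and $\EL_{\wt S}[\wt f(\wt C)] = \EL_{\wt S}[p^{-1}(f(C))] = d\,\EL_S[f(C)]$. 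Dividing, the factors of $d$ cancel and
\[
\frac{\EL_{\wt S}[\wt f(\wt C)]}{\EL_{\wt R}[\wt C]} = \frac{\EL_S[f(C)]}{\EL_R[C]}.
\]

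Taking the supremum over all simple multi-curves $C$ on $R$ with $\EL_R[C] \ne 0$, each such $C$ contributes a simple multi-curve $\wt C$ on $\wt R$ realizing the same ratio, so $\SF[\wt f] \ge \SF[f]$. One point to check is that $\wt C = q^{-1}(C)$ is genuinely a simple multi-curve with a component of nonzero extremal length: simplicity is immediate since $q$ is a covering map (a local homeomorphism), and nontriviality of some component follows because a trivial multi-curve has zero extremal length while $\EL_{\wt R}[\wt C] = d\,\EL_R[C] > 0$. I should also note that if $C \in \Curves^+(R)$ is closed then so is $\wt C$, so the preimage stays in the index set over which $\SF[\wt f]$ is defined.

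There is no serious obstacle here; the only mild subtlety is making sure Lemma~\ref{lem:el-cover} is being invoked with the correct covering maps on each side — namely $q$ over $R$ and $p$ over $S$, which have equal degree — and that $\wt f(q^{-1}(C)) = p^{-1}(f(C))$ as weighted multi-curves (same weights), which is a direct consequence of the fiber-product description of $\wt R$ and $\wt f$. Once that identification is in hand, the cancellation of $d$ is the whole content of the lemma.
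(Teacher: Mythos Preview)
Your proposal is correct and follows essentially the same approach as the paper's proof, which is the one-line observation that Lemma~\ref{lem:el-cover} and the definition of $\SF$ together show the supremum defining $\SF[\wt f]$ is over a set containing (the preimages of) all curves used for $\SF[f]$. You have simply spelled out the details: that $q$ and $p$ have the same degree~$d$ (immediate from the fiber-product description), that $\wt f(q^{-1}(C)) = p^{-1}(f(C))$, and that the factors of~$d$ cancel in the ratio.
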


\begin{proof}
  Follows from Lemma~\ref{lem:el-cover} and the definition of $\SF$,
  as the supremum involved in computing $\SF[\wt{f}]$ is over a larger
  set.
\end{proof}

\begin{proposition}\label{prop:SF-large-cover}
  If $f \co R \hookrightarrow S$ is a topological embedding of Riemann
  surfaces with $\SF[f] \ge 1$ and $\wt f$ is a finite cover of~$f$ in the
  sense of Definition~\ref{def:cover-map}, then $\SF[\wt f] = \SF[f]$.
\end{proposition}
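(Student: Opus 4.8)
The plan is to pair Lemma~\ref{lem:SF-cover-inc}, which already supplies $\SF[\wt f]\ge\SF[f]$, with the reverse inequality, obtained by lifting an extremal quasi-conformal embedding to the cover. Write $K\coloneqq\SF[f]\ge 1$. By Theorem~\ref{thm:emb} (in the case $K=1$, where $f$ is homotopic to a conformal, hence $1$-quasi-conformal, embedding) and by Proposition~\ref{prop:sf-qc} together with Ioffe's Theorem~\ref{thm:ioffe} (in the case $K>1$), there is a quasi-conformal embedding $g\co R\hookrightarrow S$ homotopic to $f$ with quasi-conformal constant $K$.

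First I would lift $g$ to the cover. Running the pullback construction of Definition~\ref{def:cover-map} with $g$ in place of $f$ produces a finite cover $\wt g$ of $g$ over a surface $\wt R_g$, which is again a topological embedding by the argument given there. A homotopy $H\co R\times[0,1]\to S$ from $f$ to $g$ pulls back along $p$ to a covering of $R\times[0,1]$; since a covering of $R\times[0,1]$ is a product, this identifies $\wt R_g$ canonically with $\wt R$ in such a way that $\wt g$ is homotopic to $\wt f$. Moreover, since $p$ and $q$ are local conformal isomorphisms and quasi-conformality is a local property, $\wt g$ is quasi-conformal with the same constant $K$.

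Next I would apply Lemma~\ref{lem:qc-sf} to $\wt g$: for every multi-curve $C$ on $\wt R$ we get $\EL_{\wt S}[\wt g(C)]\le K\,\EL_{\wt R}[C]$. Because $\wt g$ and $\wt f$ are homotopic, $[\wt g(C)]=[\wt f(C)]$, so $\EL_{\wt S}[\wt f(C)]\le K\,\EL_{\wt R}[C]$; dividing by $\EL_{\wt R}[C]$ when it is nonzero and taking the supremum over $C\in\Curves^+(\wt R)$ gives $\SF[\wt f]\le K=\SF[f]$. Combined with Lemma~\ref{lem:SF-cover-inc} this yields $\SF[\wt f]=\SF[f]$.

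The only genuinely delicate point is the bookkeeping in the second step: the pullback surface $\wt R$ depends a priori on the actual map $f$, not merely on its homotopy class, so one must check that the covers built from $f$ and from the homotopic map $g$ are canonically homeomorphic in a way carrying $\wt f$ to a map homotopic to $\wt g$. This is exactly the homotopy lifting property for covering spaces, so it is routine; everything else is a direct appeal to Theorem~\ref{thm:emb}, Proposition~\ref{prop:sf-qc}, Lemma~\ref{lem:qc-sf}, and Lemma~\ref{lem:SF-cover-inc}.
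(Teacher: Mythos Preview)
Your argument is correct and follows essentially the same route as the paper's proof: both obtain the lower bound from Lemma~\ref{lem:SF-cover-inc} and the upper bound by lifting an optimal quasi-conformal embedding to the cover. The only minor variation is that for $K>1$ the paper observes that the lift of a Teichm\"uller embedding is again a Teichm\"uller embedding (hence minimal, so $\SF[\wt f]=K$ via Proposition~\ref{prop:sf-qc}), whereas you use the more elementary fact that the lift is $K$-quasi-conformal and invoke Lemma~\ref{lem:qc-sf} directly; your version is arguably cleaner and also handles the case $K=1$ uniformly. Your explicit discussion of why the pullback $\wt R$ built from $g$ can be identified with the one built from $f$ is a point the paper leaves implicit.
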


\begin{proof}
  If $\SF[f] = 1$, the result follows from
  Lemma~\ref{lem:SF-cover-inc} and Theorem~\ref{thm:emb}.

  If $\SF[f] > 1$,
  by Proposition~\ref{prop:sf-qc} $\SF[f]$ is the minimal
  quasi-conformal constant of any map homotopic to~$f$, which by
  Theorem~\ref{thm:ioffe} is given by a Teichmüller embedding~$g$. Let
  $\wt{g}$ be the corresponding
  cover of~$g$. Then $\wt{g}$ is also a Teichmüller embedding with the same
  quasi-conformal constant, and so $\SF[\wt{f}]$ is the
  quasi-conformal constant of~$\wt{g}$.
\end{proof}

\begin{remark}
  Proposition~\ref{prop:SF-large-cover} relies on $\wt{f}$ being a cover
  of finite degree of~$f$. McMullen \cite[Corollary 1.2]{McMullen89:Amenable}
  shows that, in the case that $R$ and $S$ are closed surfaces, $f$
  is a Teichmüller map, and $\wt{f}$ is a non-amenable cover of~$f$,
  then $\wt{f}$ does \emph{not} minimize the quasi-conformal
  distortion in its bounded homotopy class.
\end{remark}

\begin{proposition}\label{prop:sf-cover-1}
  For $\wt{f}$ a finite cover of $f\co R \hookrightarrow S$, 
  the quantity $\SF[\wt{f}]$ is less than one, equal to one, or
  greater than one exactly when $\SF[f]$ is less than one, equal to one, or
  greater than one.
\end{proposition}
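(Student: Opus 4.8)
The plan is to prove Proposition~\ref{prop:sf-cover-1} by assembling the three cases from facts already available in the excerpt, treating the cases $\SF[f] > 1$, $\SF[f] = 1$, and $\SF[f] < 1$ separately. The cases $\SF[f] \ge 1$ are essentially immediate; the case $\SF[f] < 1$ is where the real content lies, though note that here we only need the \emph{qualitative} statement $\SF[\wt f] < 1$, not the uniform bound $\wt\SF[f] < 1$ of Theorem~\ref{thm:sf-cover}, so we can get away with a softer argument.

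First I would dispatch $\SF[f] > 1$: by Proposition~\ref{prop:SF-large-cover} we have $\SF[\wt f] = \SF[f] > 1$, and by Lemma~\ref{lem:SF-cover-inc} monotonicity $\SF[\wt f] \ge \SF[f]$ also suffices on its own. Next, $\SF[f] = 1$: Lemma~\ref{lem:SF-cover-inc} gives $\SF[\wt f] \ge 1$, while Proposition~\ref{prop:SF-large-cover} gives $\SF[\wt f] = 1$ exactly. Conversely, to see these are the \emph{only} cases giving $\SF[\wt f] \ge 1$, I note that $q \co \wt R \to R$ is itself a finite covering map, so $\SF[\wt f] \le 1$ whenever $\SF[f] \le 1$ would follow if I can show the $\SF[f] < 1$ case, completing the biconditional structure. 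So everything reduces to: $\SF[f] < 1 \implies \SF[\wt f] < 1$.

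For the hard case, suppose $\SF[f] < 1$. By Theorem~\ref{thm:strict-emb}, $f$ is homotopic to an annular conformal embedding; equivalently (and more usefully here), there is an annular extension $\widehat R$ of $R$ and a conformal embedding $\widehat f \co \widehat R \hookrightarrow S$ extending $f$. The covering $p \co \wt S \to S$ then pulls back along $\widehat f$ to a covering $\widehat q \co \widehat{\wt R} \to \widehat R$ and a conformal embedding $\widehat{\wt f} \co \widehat{\wt R} \hookrightarrow \wt S$ in the sense of Definition~\ref{def:cover-map}, because covers of conformal maps are conformal and the pullback of an annular extension of $R$ under the covering $q \co \wt R \to R$ is an annular extension of $\wt R$ (each annulus attached to a boundary component of $R$ pulls back to a union of annuli attached to the boundary components of $\wt R$ lying over it). Thus $\wt f$ is homotopic to an annular conformal embedding, and applying Theorem~\ref{thm:strict-emb} in the other direction (condition~\eqref{item:strict-annular} $\Rightarrow$ \eqref{item:strict-sf}) yields $\SF[\wt f] < 1$.

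The main obstacle I anticipate is the bookkeeping in the middle step: verifying carefully that the pullback of an \emph{annular} extension is again an \emph{annular} extension (the attached annuli remain genuine conformal annuli under the finite cover, with the boundary of $\wt R$ still smoothly embedded), and that the pulled-back embedding is defined on the whole of $\widehat{\wt R}$ rather than merely on $\wt R$. One must also handle the possibility that $S$, hence $\wt S$, is disconnected — but since $\SF$ of a disconnected target is the supremum of the stretch factors over the components (as used in the proof of Theorem~\ref{thm:emb}), and a finite cover restricts to a finite cover on each component, the general case follows from the connected case componentwise. I would organize the write-up so that the $\SF[f] < 1$ implication is stated first and the two easy cases follow, or simply present all three cases in turn with the disconnectedness reduction noted at the outset.
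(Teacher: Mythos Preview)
Your proposal is correct and follows essentially the same strategy as the paper: for $\SF[f] \ge 1$ invoke Proposition~\ref{prop:SF-large-cover}, and for $\SF[f] < 1$ use Theorem~\ref{thm:strict-emb} to convert the inequality into a geometric property of the embedding that is manifestly preserved under finite covers, then convert back.

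The one difference worth noting is which equivalent condition from Theorem~\ref{thm:strict-emb} you pick as the intermediate step. You use condition~\eqref{item:strict-annular} (annular conformal embedding), which forces you into the bookkeeping you flag as ``the main obstacle'': checking that the pullback of an annular extension is again an annular extension, that boundaries stay smoothly embedded, etc. The paper instead uses condition~\eqref{item:strict-strict} (strict conformal embedding), for which covering-invariance is a one-liner: if $f(R)$ misses a non-empty open set in each component of~$S$, then $\wt f(\wt R)$ misses its preimage, which is non-empty and open in each component of~$\wt S$. This sidesteps all the bookkeeping you were worried about. Your route works, but the paper's choice of intermediate condition is cleaner.
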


\begin{proof}
  If $\SF[f] < 1$, by Theorem~\ref{thm:strict-emb}, $f$
  is homotopic to a
  strict conformal embedding. Since a cover of a strict conformal
  embedding is a strict conformal embedding, we have $\SF[\wt{f}] < 1$.
  The other cases follow from Proposition~\ref{prop:SF-large-cover}.
\end{proof}

Although there is some good behavior, it is not true in general that
$\SF[\wt{f}]=\SF[f]$.

\begin{example}\label{examp:cover}
  Let $R$ and~$S$ both be disks with two
  points removed, with $f \co R \to S$ a strict conformal embedding
  and $g \co S \to R$ a homotopy inverse. The surfaces~$R$ and~$S$
  have, up to equivalence and scale, only one non-trivial
  simple multi-curve (the boundary-parallel curve), so
  $\SF[f] = 1/\SF[g]$. Also, $SF[f] < 1$, since $f$ was assumed to be
  a strict conformal embedding. Now take any non-trivial finite
  cover~$\wt R$ of~$R$ and the corresponding cover~$\wt S$ of~$S$. Let
  the corresponding topological
  embeddings be $\wt{f} \co \wt{R}\to\wt{S}$ and $\wt g \co \wt S \to \wt
  R$. Since $\SF[g] > 1$, by Proposition~\ref{prop:SF-large-cover} we
  have $\SF[\wt{g}] = \SF[g]$, with the supremum in the definition of
  stretch factor realized by a symmetric multi-curve. By
  Theorem~\ref{thm:ioffe}, the
  quadratic differentials realizing this stretch factor are
  \emph{unique}, so
  for \emph{any} non-symmetric multi-curve~$C$ on~$\wt S$ (or equivalently
  $\wt R$), we have
  \[
  \SF[g] = \SF[\wt{g}] > \frac{\EL_{\wt R}[C]}{\EL_{\wt S}[C]}.
  \]
  But then
  \[
  \SF[\wt{f}] \ge \frac{\EL_{\wt S}[C]}{\EL_{\wt R}[C]} > 1/\SF[g] = \SF[f].
  \]
\end{example}

\begin{example}\label{examp:cover2}
  The previous example can be improved to give an examples with
  arbitrarily large gap between $\SF$ and $\wt\SF$: for any
  $0 < \eps < \delta < 1$, there is an embedding
  $f \co R \hookrightarrow S$ and two-fold cover $\wt f$ so that
  $\SF[f] < \epsilon$ and $\SF[\wt f] > \delta$. This example is due
  to Maxime Fortier-Bourque. Let $R_t$ be the disk with two punctures
  obtained by doubling a $t \times 1$ rectangle along three of its
  sides, and let $S_t$ be
  the double cover of~$R_t$ branched along one of the two punctures.
  Then for $s < t$ the embedding $S_s \hookrightarrow S_t$ is a cover
  of the embedding $R_s \hookrightarrow R_t$.

  Let $C_1$ be the only non-trivial curve on $R_t$, the curve parallel
  to the boundary as shown on the
  left of Figure~\ref{fig:cover2}. 
  Let $C_2$ be the non-symmetric curve on~$S_t$
  shown on the right of Figure~\ref{fig:cover2}. By construction,
  $\EL_{R_t}[C_1] = 2/t$.
  As $t \to \infty$,
  the surface $S_t$ approaches a sphere with 4~punctures, specifically
  the double of a square. The curve $C_2$ is non-trivial on the
  4-punctured sphere, and so its extremal length approaches a definite
  value:
  \[
    \lim_{t \to \infty} \EL_{S_t}[C_2] = 2.
  \]
  Thus, for $t \gg s \gg 0$, we have
  \begin{align*}
    \SF[R_s \hookrightarrow R_t] &= \frac{2/t}{2/s} = \frac{s}{t}\\
    \SF[S_s \hookrightarrow S_t] &\ge \frac{\EL_{S_t}[C_2]}{\EL_{S_s}[C_2]}
       \rightarrow 1,
  \end{align*}
  as desired.
  
  With a little more care, one can show that
    $\EL_{S_t}[C_2] \approx 2(1 + Ke^{\pi t/2})$ for some
    constant~$K$. This uses the uniformization of $S_\infty$ to the double
    of a square by the composition of $z \mapsto \sin(\pi i z/2)$ and
    $z \mapsto \int_{w=0}^z dw/\sqrt{w^3-w}$.
\end{example}
\begin{figure}
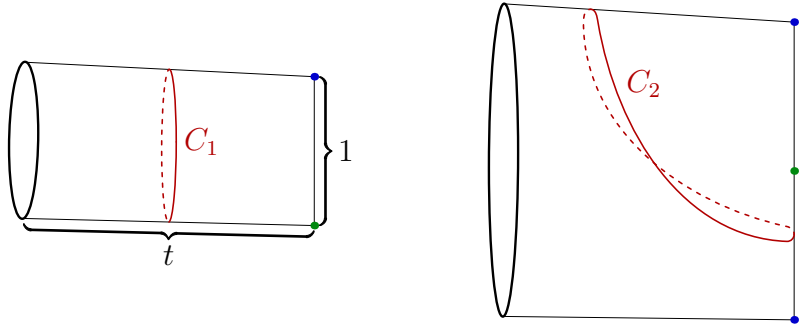

  \[
    \mfigb{cover-examp-0}
    \qquad\qquad
    \mfigb{cover-examp-1}
  \]
  \caption{The surfaces from Example~\ref{examp:cover2}. Left: The
    family of surfaces $R_t$, with the unique non-trivial curve~$C_1$.
    Right: the double cover $S_t$, with the curve~$C_2$.}
  \label{fig:cover2}
\end{figure}

In order to prove Theorem~\ref{thm:sf-cover}, we need some extra
control: a strengthening of Lemma~\ref{lem:area-bound}.

\begin{theorem}\label{thm:area-surface}
  Let $f\co R \hookrightarrow S$ be a annular conformal
  embedding of Riemann surfaces. Then there is a
  constant $K < 1$ so that for any quadratic differential $q \in
  \Quad(S)$,
  \[
  A_{f^* q}(R) \le K A_q(S).
  \]
  Furthermore, the constant $K$ can be chosen uniformly under
  finite covers, in the sense that for any finite cover
  $\wt{f}\co \wt{R}\to\wt{S}$
  of~$f$ and any
  quadratic differential $\wt{q} \in \Quad(\wt{S})$,
  \[
  A_{f^* \wt{q}}(\wt{R}) \le K A_{\wt{q}}(\wt{S}).
  \]
\end{theorem}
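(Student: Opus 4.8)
The plan is to reduce everything to the non-covered statement plus a compactness argument that is stable under passing to covers. First I would establish the base case (no covers). Since $f$ is an annular conformal embedding, it extends to a conformal embedding $\widehat{f}\co\widehat{R}\hookrightarrow S$ where $\widehat{R}$ is obtained by gluing a non-empty conformal annulus $B_i$ onto each boundary component $C_i$ of $R$. For a quadratic differential $q\in\Quad(S)$, pulling back gives $\widehat{f}^*q\in\Quad(\widehat{R})$, and the key point is that the glued-on annuli carry some definite ``room'': I want a lower bound on $A_{\widehat{f}^*q}\!\left(\bigcup_i B_i\right)$ in terms of $A_q(S)$. The naive bound of Lemma~\ref{lem:area-bound} only gives $A_{\widehat{f}^*q}(R)/A_q(S)<1$ with no control, so the real content is making the deficit \emph{quantitative and scale-invariant}. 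The mechanism I would use: each $B_i$ is a conformal annulus of some definite modulus $m_i>0$ (depending only on $\widehat{R}$, not on $q$), and a conformal annulus of modulus $m$ embedded in a $q$-metric surface must, by the length–area method applied to the core curve $C_i$, have $q$-area at least $m_i \cdot \ell_{|q|}(C_i)^2 / (\text{something})$ — more precisely, $A_{|q|}(B_i)\ge m_i\,\ell_i^2$ where $\ell_i$ is the $|q|$-length of the core geodesic. But $\ell_i$ could be zero for a given $q$ (e.g.\ if $q$ degenerates so that $C_i$ is pinched), so I cannot get a bound of this literal form. Instead I would pass to the projectivization $P\Quad(S)$... except that $\Quad(S)$ is infinite-dimensional when $\bdy S\ne\emptyset$, so $P\Quad(S)$ is \emph{not} compact, and this is exactly where the proof of Lemma~\ref{lem:area-bound} fails to generalize with a uniform constant — and also exactly the main obstacle.

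So the second, essential step is to cut down to a finite-dimensional, compact situation. The trick I would employ: it suffices to prove the area bound for $q\in\Quad^+(S)$ (quadratic differentials non-negative on $\bdy S$). Indeed, for a general $q\in\Quad(S)$ one can consider the horizontal and vertical foliations $\Fh(q),\Fv(q)$; by the Heights theorem (Theorem~\ref{thm:heights}) each of these is realized by a quadratic differential in $\Quad^+(S)$, and $A_q(S)$, $A_{\widehat f^*q}(R)$ are controlled by the corresponding quantities for $\Fh(q)$ and $\Fv(q)$ (area is additive in a suitable sense over the two transverse foliations, up to a bounded factor). Since $\Quad^+(S)\subset\Quad^{\RR}(S)$ is a finite-dimensional cone, $P\Quad^+(S)$ is compact, and on it the function $q\mapsto A_{\widehat f^*q}(R)/A_q(S)$ is continuous with values strictly less than $1$ (because the open annuli $B_i$ have positive $q$-area for every non-zero $q\in\Quad^+(S)$ — here we use that $q$, being non-negative on the boundary and non-zero, cannot vanish identically near $\bdy S$, so its horizontal foliation genuinely crosses each $B_i$). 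Compactness then yields the uniform $K<1$. I would need to be careful reducing the general $\Quad(S)$ statement to the $\Quad^+(S)$ statement; the cleanest route may instead be to prove the bound directly via extremal length of the core curves using Proposition~\ref{prop:el-area}/\ref{prop:el-total-el}, testing against all weighted simple multi-curves and invoking Proposition~\ref{prop:curves-dense} to reach all of $\MF^+(S)$, then relating a general $q$ to its foliations.

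For the covering statement — the part with actual teeth — the point is that the constant $K$ from the base case depends only on the moduli $m_i$ of the annuli $B_i$, and these are \emph{preserved under covers}: if $\wt{f}\co\wt R\hookrightarrow\wt S$ is a degree-$d$ cover of $f$ via $p\co\wt S\to S$, then $\widehat{\wt R}$ inherits annuli that are components of $p^{-1}$ of the $B_i$, each of which is a cover of some $B_i$ of degree $d'\mid d$ and hence has modulus $m_i/d'\ge m_i/d$, or — choosing the annular extension of $\wt R$ to be the pullback of $\widehat{R}$'s extension — has modulus exactly $m_i$ (an unbranched cover of an annulus of modulus $m$ that is a homeomorphism on each boundary circle has the same modulus; a cyclic cover multiplies it by the degree). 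Either way the relevant moduli stay bounded below by a constant depending only on $f$. Then I would re-run the compactness argument on $\wt S$: the \emph{bound} $A_{\widehat{\wt f}^*\wt q}(\wt R)/A_{\wt q}(\wt S)\le K$ for $\wt q\in\Quad^+(\wt S)$ follows from the same length–area estimate applied to the core curves of the pulled-back annuli, with the \emph{same} $K$, since that estimate is purely local/metric and never referenced the compactness of $P\Quad(\wt S)$ — compactness was only used in the base case to convert ``$<1$ pointwise'' into ``$\le K$ uniformly'', and here we already have the explicit $K$ in hand from the modulus lower bound. The main obstacle, then, is packaging the length–area inequality so that it directly produces the explicit constant $K=K(\{m_i\})$ rather than merely ``${}<1$'': concretely, showing that an embedded conformal annulus of modulus $\ge m$ in a $|q|$-surface $S$, with core homotopic to $C_i$, occupies $q$-area at least $\phi(m)\cdot(\text{the }|q|\text{-area ``attributable'' to }C_i)$ for an explicit positive function $\phi$, and then summing over $i$ and comparing with $A_q(S)$ via the Jenkins–Strebel decomposition of $q$ (or rather, since $q$ need not be Jenkins–Strebel, via a limiting/approximation argument over simple multi-curves as in Proposition~\ref{prop:slit-approx} and Proposition~\ref{prop:curves-dense}). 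Getting this accounting to close with a strictly-less-than-one constant, uniformly, is the crux.
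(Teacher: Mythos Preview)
Your approach has a genuine gap, and it is precisely the one you flag but do not close. The theorem is stated for \emph{all} $q\in\Quad(S)$, an infinite-dimensional space. Your proposed reduction to $\Quad^+(S)$ via the Heights theorem does not work: for a general $q\in\Quad(S)$ the boundary $\bdy S$ need not be a leaf of $\Fh(q)$ at all, so $[\Fh(q)]$ is not an element of $\MF^+(S)$ and Theorem~\ref{thm:heights} does not apply. Even if it did, there is no clear relation between $A_q$ and the areas of the $\Quad^+$ differentials realizing $\Fh(q)$ and $\Fv(q)$. Without that reduction you are left with your length--area idea on the collar annuli $B_i$, and as you yourself note, $A_{|q|}(B_i)\ge m_i\,\ell_{|q|}(C_i)^2$ gives nothing when $\ell_{|q|}(C_i)$ is small --- which is exactly what happens if $q$ concentrates in the interior of $f(R)$, far from $\bdy R$. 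Your cover argument then inherits the same defect: you assert ``we already have the explicit $K$ in hand from the modulus lower bound'', but no such explicit $K$ was ever produced; the only $K$ you obtained came from compactness of $P\Quad^+(S)$, and that constant is cover-dependent since $\dim\Quad^+(\wt S)$ grows with the degree.

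The paper's proof is organized around a completely different mechanism. It never touches the annular collars or their moduli. Instead it cuts $S$ (and $R$) into topological disks and once-punctured disks by a maximal system of arcs $\{\gamma_i\}$, and on each disk $P_j$ invokes a holomorphic-function estimate (Propositions~\ref{prop:disk-area} and~\ref{prop:punct-disk-area}, proved via subharmonicity of $|q|$ and the Hadamard Three Circles Theorem, or alternatively via weak compactness of probability measures on $\overline\DD$) saying: if $A_q(P_j\cap R)/A_q(P_j)$ is close to~$1$, then the $q$-area of $P_j$ must be concentrated in a fixed neighborhood $G$ of the arcs $\gamma_i$. A second, transverse decomposition by tripods $\{\tau_j\}$ forces the area to concentrate in a disjoint neighborhood $T$. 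Since $G\cap T=\emptyset$, both cannot carry more than half the area, giving an explicit $\delta>0$ with $A_q(f(R))\le(1-\delta)A_q(S)$. The uniformity under covers is then immediate: the arcs and tripods lift, so on $\wt S$ the disk pieces are isometric copies of the pieces on~$S$, and the \emph{same} disk-level constants apply. The point is that the disk estimates are purely local statements about holomorphic functions on~$\DD$, insensitive to the global topology of~$S$ or~$\wt S$; this is what makes the constant genuinely uniform, and it is exactly the ingredient your annulus-modulus approach lacks.
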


The technique in Lemma~\ref{lem:area-bound} will not work to prove
Theorem~\ref{thm:area-surface}, as
$\Quad(S)$ is infinite-dimensional. (That bound is also not uniform
under covers.)
As in Lemma~\ref{lem:area-bound}, $K$ depends on the actual embedding,
not just the
homotopy class of the embedding.

When $S$ is a disk, Theorem~\ref{thm:area-surface} is not hard.
For $a \in \mathbb{C}$ and $r>0$, we denote by
$\D(a,r)=\{z:\abs{z-a}<r\}$ the open disk of radius $r$ about $a$.

\begin{proposition}\label{prop:small-area}
  Let $\Omega \subset \DD$ be an open subset of the disk so that
  $\overline{\Omega} \cap \partial\DD = \emptyset$. For any
  quadratic differential $q\in\Quad(\DD)$,
  \[
  A_q(\Omega) \le r^2 A_q(\DD),
  \]
  where $r$ is large enough so that $\Omega \subset \D(0,r)$.
\end{proposition}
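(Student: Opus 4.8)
The plan is to reduce immediately to the model case $\Omega = \D(0,r)$ and then exploit that the density of $A_q$ is a subharmonic function. Since $A_q$ is a (positive) measure and $\Omega\subset\D(0,r)$, monotonicity gives $A_q(\Omega)\le A_q(\D(0,r))$, so it suffices to prove $A_q(\D(0,r))\le r^2A_q(\DD)$. We may assume $r<1$ — if $r\ge 1$ then $\D(0,r)\supseteq\DD$ and the inequality is trivial — and such an $r<1$ exists precisely because $\overline{\Omega}$ is a compact subset of the open disk. Writing $q=\phi(z)\,(dz)^2$ in the standard coordinate on $\DD$, with $\phi$ holomorphic on the interior and of finite area integral, we have $A_q(U)=\iint_U\abs{\phi(z)}\,dx\,dy$ for every Borel set $U$.

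The key step is the observation that $\abs{\phi}$ is subharmonic on $\DD$. Away from the zeros of $\phi$ the function $\log\abs{\phi}$ is harmonic, so $\abs{\phi}=\exp(\log\abs{\phi})$ is subharmonic there, and it extends subharmonically across the (isolated) zeros. If $\DD$ has punctures, then $q\in\Quad(\DD)$ has at most a simple pole at each; writing $\phi(z)=\psi(z)/(z-p)$ near such a puncture $p$ with $\psi$ holomorphic, we see $\log\abs{\phi}=\log\abs{\psi}-\log\abs{z-p}$ is still subharmonic on a punctured neighborhood of $p$, and since $\abs{\phi}\to\infty=\abs{\phi(p)}$ the sub-mean-value inequality holds trivially at $p$ as well (the simple pole being locally integrable in two real dimensions, consistent with $q$ having finite area). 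In all cases it follows that the circular means
\[
M(\rho)\coloneqq \frac{1}{2\pi}\int_0^{2\pi}\abs{\phi(\rho e^{i\theta})}\,d\theta
\]
are non-decreasing in $\rho\in(0,1)$ (finite for a.e.\ $\rho$).

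The conclusion is then a one-line computation in polar coordinates: with the substitution $\rho=rs$,
\[
A_q(\D(0,r))=\int_0^r 2\pi\rho\,M(\rho)\,d\rho = r^2\int_0^1 2\pi s\,M(rs)\,ds \le r^2\int_0^1 2\pi s\,M(s)\,ds = r^2 A_q(\DD),
\]
where the inequality uses $M(rs)\le M(s)$ since $rs\le s$. (Equivalently, the same monotonicity of $M$ shows directly that $\rho\mapsto \rho^{-2}A_q(\D(0,\rho))$ is non-decreasing on $(0,1]$, which is exactly the claimed estimate with $\Omega$ replaced by $\D(0,r)$.) Combining with $A_q(\Omega)\le A_q(\D(0,r))$ finishes the proof.

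I do not expect a genuine obstacle here; the only point demanding a little care is the justification that $\abs{\phi}$ is subharmonic, and its circular means behave well, in the presence of the simple poles that $\Quad(\DD)$ permits at punctures. This is standard, and if one wishes to avoid the issue entirely one can simply note that the punctures form a finite set and therefore contribute nothing to any of the area integrals above.
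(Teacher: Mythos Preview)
Your proof is correct and follows essentially the same approach as the paper: reduce to $\Omega=\D(0,r)$, use that $\abs{\phi}$ is subharmonic so its circular means $M(\rho)$ are non-decreasing, and conclude that $\rho^{-2}A_q(\D(0,\rho))$ is non-decreasing. The only cosmetic difference is that the paper phrases the last step as showing $\frac{d}{dr}\bigl(J(r)/r^2\bigr)\ge 0$ via $J(r)\le r^2 I(r)/2$, whereas your substitution $\rho=rs$ gives the same monotonicity directly; your discussion of punctures is also unnecessary here since the paper treats $\DD$ as unpunctured and handles $\DD^\times$ separately.
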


Proposition~\ref{prop:small-area} is a special case of
Proposition~\ref{prop:disk-area} below, but we give a separate proof
because we can give a precise constant.

\begin{proof}
  Let $r_0$ be the smallest value so that
  $\Omega \subset \D(0,r_0) \subset \DD$, and let $q \in \Quad(\DD)$
  be arbitrary. For $0 \le r \le 1$, we will show that
  $A_q(\D(0,r)) \le r^2 \cdot A_q(\DD)$, so that $K = r_0^2$
  suffices. Define
  \begin{align*}
    I(r) &= \int_{\theta=0}^{2\pi} \abs{q(r e^{i\theta})} \, d\theta\\
    J(r) &= \int_{s=0}^r s I(s)\,ds = A_q(\D(0,r)),
  \end{align*}
  where we are writing $q = q(z)\,(dz)^2$ with $q(z)$ a holomorphic
  function. The function $z\mapsto\abs{q(z)}$ is subharmonic, so if $s
  < r$, we have $I(s) \le I(r)$. (We would have equality between the
  corresponding integrals if $\abs{q(z)}$ were harmonic; see, e.g.,
  \cite[p.\ 142]{Burckel79:IntroComplex}). We therefore
  have $J(r) = \int_{s=0}^r s I(s)\,ds \le r^2 I(r)/2$, and so
  \[
  \frac{d}{dr} \frac{J(r)}{r^2}
    = \frac{r J'(r) - 2J(r)}{r^3}
    \ge \frac{r^2 I(r) - r^2 I(r)}{r^3} = 0.
  \]
  It follows that $J(r)/r^2 \le J(1)$, as desired.
\end{proof}

Proposition~\ref{prop:small-area} is false if $\overline{\Omega}$ is
allowed to intersect $\partial\DD$. Suppose $\Omega$ contains a
neighborhood of a segment of $\partial\DD$, and let $w$ be a point
very close to this segment.  By a conformal automorphism $\phi$ of $\DD$, we
can take $w$ to the center of the disk. Then
$\bigl(d\phi(z)\bigr)^2$ will have its measure concentrated near
$w\in\Omega$, as illustrated in
Figure~\ref{fig:disk-trans}.

\begin{figure}
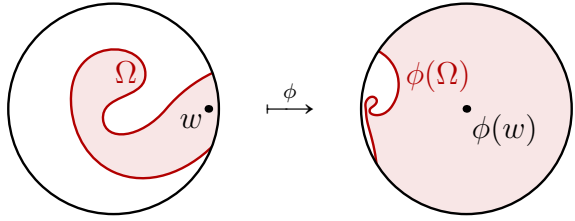

  \[
  \mfigb{embedding-20} \quad\overset{\phi}{\longmapsto}\quad \mfigb{embedding-21}
  \]
  \caption{Möbius transformations to make the area of a quadratic
    differential be concentrated near a point $w$ that is close to
    $\partial\DD$.}
\label{fig:disk-trans}
\end{figure}

The following proposition says that this is all that can happen: if
the mass of $q$ on $\Omega$ gets large, then the mass of $q$ is
concentrating near $\partial\DD$.

\begin{proposition}\label{prop:disk-area}
Let $\Omega \subset \D$ be an open subset of the disk with an open
set~$A$
in its complement, and let $B \subset \overline{\D}$ be a neighborhood
of $\overline{\Omega} \cap \partial \overline{\D}$, as illustrated in
Figure~\ref{fig:surface-decomp}. Then, for every
$\epsilon>0$, there is a $\delta>0$ so that if $q \in \Quad(\D)$
is such that $q \ne 0$ and
\begin{align*}
\frac{A_q(\Omega)}{A_q(\D)} &> 1-\delta,
\shortintertext{then}
\frac{A_q(B)}{A_q(\D)} &> 1-\epsilon.
\end{align*}
\end{proposition}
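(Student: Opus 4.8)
The plan is to prove the contrapositive in a quantitative, compactness-free way: if $A_q(B)/A_q(\D)$ is bounded away from $1$, then $A_q(\Omega)/A_q(\D)$ is bounded away from $1$ by a definite amount. The key geometric input is that $\Omega$ stays a definite distance from $\partial\D$ away from the boundary neighborhood $B$; precisely, $K \coloneqq \overline{\Omega}\setminus B$ is a compact subset of the open disk, so there is $r<1$ with $K \subset \overline{\D(0,r)}$, and moreover $K$ is disjoint from the open set $A$ in the complement of $\Omega$. First I would reduce to estimating $A_q(K)$ in terms of $A_q(\D(0,r'))$ for some fixed $r<r'<1$: since $\Omega \subset B \cup K$, we have $A_q(\Omega) \le A_q(B) + A_q(K)$, so it suffices to show that $A_q(K) \le (1-c)A_q(\D)$ for a constant $c>0$ not depending on $q$, whenever $A_q(B)$ is not almost all of $A_q(\D)$ — actually it is cleaner to show directly $A_q(K) + A_q(B) \le (1-c)A_q(\D) + A_q(B\cap \D(0,r'))$ or simply to bound $A_q(\D\setminus \D(0,r'))$ from below by a multiple of $A_q(\D)$ minus $A_q(B)$.

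The heart of the argument is a local-to-global subharmonicity estimate generalizing Proposition~\ref{prop:small-area}. Because $|q(z)|$ is subharmonic on $\D$, the circular averages $I(s)$ from that proof are nondecreasing in $s$, so for any $r<r'<1$,
\[
A_q(\D(0,r)) \le \frac{r^2}{(r')^2 - r^2}\, A_q\bigl(\D(0,r')\setminus\D(0,r)\bigr)
\le \frac{r^2}{(r')^2-r^2}\,A_q(\D).
\]
This already shows $A_q(K) \le A_q(\D(0,r)) \le \eta\, A_q(\D)$ with $\eta = r^2/((r')^2-r^2)$, which can be made as small as we like by taking $r'$ close to $1$ — but that is not yet enough, since we need $A_q(\Omega)$ bounded away from $A_q(\D)$, and $B$ together with $\D(0,r')\setminus\D(0,r)$ can be nearly all of $\D$. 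So the subharmonicity bound must instead be applied in the annular region $\D\setminus\overline{\D(0,r)}$ against the set $A$: the open set $A$ in the complement of $\Omega$ lies at positive distance from $K$, so after possibly shrinking, $A$ contains a disk $\D(a,\rho)\subset\D$ with $\overline{\D(a,\rho)}\cap K = \emptyset$. Translating Proposition~\ref{prop:small-area} to $a$ (via the affine change $z\mapsto (z-a)/R$ mapping a disk $\D(a,R)\subset\D$ containing $K$ to the unit disk, applied to the pulled-back differential), one gets $A_q(K) \le (1 - c_0)A_q(\D(a,R)) \le (1-c_0)A_q(\D)$ for a fixed $c_0>0$. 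Feeding this into $A_q(\Omega) \le A_q(B) + A_q(K)$ and using that outside $B$ the set $\Omega$ is contained in $K$ gives $A_q(\D) - A_q(\Omega) \ge A_q(\D\setminus B) - A_q(K) \ge c_0\,A_q(K')$ for an appropriate comparison region; a short bookkeeping step converts this into the stated $\delta$–$\epsilon$ form: $A_q(B)/A_q(\D) \le 1-\epsilon$ forces $A_q(\Omega)/A_q(\D) \le 1 - c_0\epsilon$ for a suitable rescaling, hence the contrapositive with $\delta = c_0 \epsilon$.

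The main obstacle I anticipate is organizing the geometry of $\Omega$, $A$, and $B$ so that there really is a single compact "core" $K$ inside $\D$ carrying all of $\Omega$ outside $B$, and a fixed disk inside $A$ disjoint from $K$; this requires that $B$ be an honest neighborhood of $\overline{\Omega}\cap\partial\D$ (so that $\overline{\Omega}\setminus B$ is compact in the open disk) and that $A$ be disjoint from $\overline{\Omega}$, which should be read off from Figure~\ref{fig:surface-decomp}. Once the sets are arranged, the only analytic ingredient is the monotonicity of circular averages of the subharmonic function $|q(z)|$, exactly as in the proof of Proposition~\ref{prop:small-area}, applied with center at an interior point of $A$ rather than at $0$. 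The passage from the fixed-ratio bound $A_q(K)\le(1-c_0)A_q(\D)$ to the $\epsilon$–$\delta$ statement is then routine.
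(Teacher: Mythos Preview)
Your proposal has a genuine gap at the ``short bookkeeping step'', which is in fact where the real content lies. The bound you extract from subharmonicity, $A_q(K)\le (1-c_0)A_q(\D)$, is true but useless here: it holds for \emph{every} $q$, independently of the hypothesis $A_q(\Omega)>(1-\delta)A_q(\D)$, and so cannot be combined with $A_q(B)\le(1-\epsilon)A_q(\D)$ to force $A_q(\Omega)$ away from $A_q(\D)$. Concretely, from $\Omega\subset B\cup K$ you only get $A_q(\Omega)\le A_q(B)+A_q(K)\le (2-\epsilon-c_0)A_q(\D)$, and from $\D\setminus B\subset K\cup(\D\setminus\Omega)$ you get $A_q(\D\setminus\Omega)\ge \epsilon A_q(\D)-A_q(K)\ge(\epsilon-1+c_0)A_q(\D)$, which is negative for small~$\epsilon$. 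A linear bound of the form $A_q(K)\le C\,A_q(A)$ would rescue the argument, but that inequality is \emph{false}: take $A=\D(0,\tfrac{1}{10})$, $K=\{\tfrac12\le|z|\le\tfrac{9}{10}\}$, and $q_n=z^n(dz)^2$; the ratio $A_{q_n}(K)/A_{q_n}(A)$ tends to infinity.

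What is actually needed is that $A_q(K)\to 0$ as $A_q(\D\setminus\Omega)\to 0$ (with $A_q(\D)=1$), i.e.\ a bound on $A_q(K)$ that \emph{depends on the hypothesis}. This uses the holomorphicity of~$q$ in an essential way, not just subharmonicity of~$|q|$. The paper's second proof obtains this via the Hadamard Three Circles Theorem: after a M\"obius transformation putting $0\in A$, convexity of $\log M_q(r)$ in $\log r$ gives $A_q(\D(0,r_1))\le c_2\,A_q(\D(0,2r_0))^{c_1}$ for some $c_1\in(0,1)$, and the right-hand side is at most $c_2\delta^{c_1}$ under the hypothesis. (The first proof gets the same conclusion qualitatively via a normal-families compactness argument.) Your monotonicity-of-circular-averages argument, which is exactly Proposition~\ref{prop:small-area}, compares the mass on $K$ only to the total mass on~$\D$, never to the mass on~$A$; that comparison is the missing idea.
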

\begin{figure}
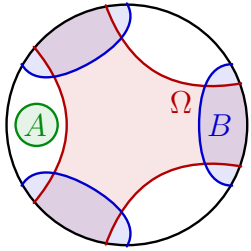

  \[\mfigb{embedding-10}\]
  \caption{The schematic setup of Proposition~\ref{prop:disk-area}.}
\end{figure}

The proposition implies that given a sequence $q_n \in
\mathcal{Q}(\D)$, if the percentage of the $\abs{q_n}$-area of $\D$
occupied by $\Omega$ tends to $1$, then the percentage of the
$\abs{q_n}$-area occupied by the set $B$ of ``thickened ends of $\Omega$''
also tends to $1$. Figure~\ref{fig:disk-trans} again provides an
example of how this happens.

We give two versions of the proof, one shorter, and the other more
explicit and giving (poor) bounds on the constants.

\begin{proof}[Proof of Proposition~\ref{prop:disk-area}, version 1]
  If there are no such bounds as in the statement of the proposition,
  there is an $0 < \epsilon < 1$ and
  a sequence of quadratic differentials $q_n\in\Quad(\DD)$ so that
  \begin{align}
    A_{q_n}(\DD) &= 1\\
    A_{q_n}(B) &< 1-\epsilon\label{eq:B-small}\\
    A_{q_n}(\Omega) &> 1-1/n.\label{eq:omega-large}
  \end{align}
  Consider $A_{q_n}$ as a measure on $\overline{\DD}$. Since the
  space of measures of unit area on the closed disk is compact in the
  weak topology, after passing to a subsequence we may assume that
  $A_{q_n}$ converges (weakly) to some limiting measure
  $\mu$ (of total mass~$1$) on~$\overline{\DD}$. Since holomorphic
  functions on the disk
  that are also in $L^1(\DD)$ form a normal family,
  after passing to a further subsequence, we may assume that the sequence~$q_n$
  converges locally uniformly to some holomorphic function~$q_\infty$
  on~$\DD$. The
  restriction of~$\mu$ to the open disk is then~$A_{q_\infty}$. But
  $A_{q_n}(A) < 1/n$, so
  $A_{q_\infty}(A) = 0$, so $q_\infty$ is identically~$0$ on~$A$ and
  therefore on the entire open disk. Hence
  $\mu$ is supported on $\bdy\oDD$. Equation~\eqref{eq:omega-large}
  implies that the support of $\mu$ is also contained in
  $\overline{\Omega}$, and hence in
  $\overline{\Omega}\cap\bdy\oDD$. But this contradicts
  Equation~\eqref{eq:B-small}.
\end{proof}

\begin{proof}[Proof of Proposition~\ref{prop:disk-area}, version 2]
Apply a Möbius transformation so that $A$ contains~$0$. We may then
assume that $\Omega \subset \D \setminus \overline{\D(0,2r_0)}$ for
some $0 < r_0 < 1/2$.
We identify the space $\mathcal{Q}(\D)$ of integrable holomorphic
quadratic differentials on $\D$ with the Banach space of
$L^1$-integrable holomorphic functions on $\D$, so that
$A_q(\D)=\int_\D\abs{q}=\norm{q}$.

Suppose $q \in \mathcal{Q}(\D)$ satisfies $A_q(\D)=1$.  We will
quantitatively show that the $q$-area of a small ball controls the
$q$-area of a big ball.
Suppose $s$ is chosen close to~$1$ with $r_0<s<1$. Suppose
$\abs{z}\leq s$.
The Cauchy Integral Formula applied to the
concentric circles comprising the disk $\D(z,1-s)$ shows that
\[ \abs{q(z)} \leq \frac{1}{\pi(1-s)^2}\int_{\D(z,1-s)}\abs{q}=\frac{1}{\pi(1-s)^2}A_q(\D(z, 1-s)),\]
i.e., $\abs{q}$ is subharmonic.
Using the assumption that $A_q(\D)=1$, this implies 
\begin{align}
\label{eqn:cif1}
\abs{z} \leq s &\implies \abs{q(z)} \le K(s):=\frac{1}{\pi(1-s)^2}.\\
\intertext{Similar reasoning shows}
\label{eqn:cif2}
\abs{z} \leq r_0 &\implies \abs{q(z)} \leq \frac{1}{\pi r_0^2} A_q(\D(0,2r_0)).
\end{align}
For $0<t<1$, let $M_q(t)$ be $\max\{\abs{q(z)} : \abs{z}=t\}$.  The
Hadamard Three Circles Theorem \cite[Theorem
6.3.13]{Conway78:Complex} implies that $\log M_q$ is a convex
function of $\log t$. Thus if $r$ and $r_1$ are chosen so that $r_0
\leq r \leq r_1 < s$ then
\begin{align*} 
\log M_q(r) &\le \log M_q(r_0)+\frac{\log M_q(s)-\log M_q(r_0)}{\log s-\log r_0}(\log r-\log r_0) \\
& \le \log M_q(r_0)+\frac{\log K(s) - \log M_q(r_0)}{\log s-\log r_0}(\log r_1-\log r_0)\\
& = \left(1-\frac{\log r_1 - \log r_0}{\log s- \log r_0}\right) \log M_q(r_0)+ \log K(s) \frac{\log r_1 - \log r_0}{\log s- \log r_0}\\
& = K_1 \log M_q(r_0) + K_2
\end{align*}
where $K_1$ and $K_2$ are constants, with $K_1>0$, depending only on
$r_0$, $r_1$, and~$s$, and not on~$q$.  It follows from (\ref{eqn:cif2}) that
there are positive constants $c_1$ and $c_2$ depending only on $r_0$,
$r_1$, and $s$ with
\begin{equation}
\label{eqn:compare_area}
A_q(\D(0,r_1)) < c_2 A_q(\D(0,2r_0))^{c_1}.
\end{equation}

Now suppose that $\delta$ is small, $0<\delta<1$, and
$A_q(\D\setminus\Omega)<\delta$.  Note that this implies that
$A_q(\D(0,2r_0))<\delta$.  Given $0<r_1<1$, let $E$ be the annulus
$\D\setminus\D(0,r_1)$.  From the definition of $B$, there is some
$r_1$ with $0<r_1<1$ close to $1$ for which $E\cap\Omega \subset E\cap
B$.  Choose $s$ so that $r_0<r_1<s<1$; we are in the setup of the
previous paragraph. We have 
\begin{align*}
1-c_2\delta^{c_1} & < A_q(E) && \mbox{by (\ref{eqn:compare_area})} \\
& = A_q(E\cap(\D\setminus \Omega)) + A_q(E \cap \Omega) \\
& < A_q(\D \setminus \Omega) + A_q(E\cap B) \\ 
& < \delta + A_q(B)
\end{align*}
and so $A_q(B)>1-c_2\delta^{c_1} - \delta$, which tends to $1$ as
$\delta$ tends to~$0$, as required.
\end{proof}

We also need an analogue of Proposition~\ref{prop:disk-area} for the
once-punctured disk. (In fact it is true
in more generality.)

\begin{proposition}\label{prop:punct-disk-area}
  Let $\DD^\times$ be the punctured unit disk $\DD \setminus \{0\}$,
  let $\Omega \subset
  \DD^\times$ be an open subset with an open set~$A$ in its
  complement, and let $B \subset \overline{\DD}^\times$ be an open
  neighborhood of $\overline{\Omega}
  \cap \partial\overline{\DD}$.
 Then, for every
$\epsilon>0$, there is a $\delta>0$ so that if $q \in \mathcal{Q}(\D^\times)$
is such that $q \ne 0$ and
\begin{align*}
\frac{A_q(\Omega)}{A_q(\D)} &> 1-\delta,
\shortintertext{then}
\frac{A_q(B)}{A_q(\D)} &> 1-\epsilon.
\end{align*}
\end{proposition}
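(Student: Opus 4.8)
The plan is to deduce Proposition~\ref{prop:punct-disk-area} from Proposition~\ref{prop:disk-area} by pulling everything back under the branched double cover $\psi\co\DD\to\DD$, $\psi(w)=w^2$. The key structural input is that a quadratic differential $q\in\Quad(\DD^\times)$ has at most a simple pole at the puncture (this is exactly the finite-area condition), so one may write $q=\phi(z)\,z^{-1}\,(dz)^2$ with $\phi$ holomorphic on all of~$\DD$; then
\[
\psi^*q=\frac{\phi(w^2)}{w^2}\,(2w\,dw)^2=4\,\phi(w^2)\,(dw)^2
\]
is holomorphic across~$w=0$, i.e.\ $\psi^*q\in\Quad(\DD)$, and it is nonzero whenever $q$ is. So the first step is to record this pullback formula.

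Next I would check that $\psi$ leaves the relevant ratios unchanged. Since $\psi$ is two-to-one off the puncture and $dA(z)=4\abs{w}^2\,dA(w)=4\abs{z}\,dA(w)$ under $z=w^2$, a short change of variables gives $A_{\psi^*q}\bigl(\psi^{-1}(E)\bigr)=2\,A_q(E)$ for every Borel $E\subset\DD^\times$; in particular $A_{\psi^*q}(\DD)=2\,A_q(\DD)<\infty$. Writing $\widetilde\Omega\coloneqq\psi^{-1}(\Omega)$, $\widetilde A\coloneqq\psi^{-1}(A)$ and $\widetilde B\coloneqq\psi^{-1}(B)$, the quotients $A_q(\Omega)/A_q(\DD)$ and $A_q(B)/A_q(\DD)$ therefore equal $A_{\psi^*q}(\widetilde\Omega)/A_{\psi^*q}(\DD)$ and $A_{\psi^*q}(\widetilde B)/A_{\psi^*q}(\DD)$. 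It then remains to see that $(\widetilde\Omega,\widetilde A,\widetilde B)$ is an admissible configuration for Proposition~\ref{prop:disk-area}: $\widetilde\Omega$ is open in $\DD$ with the nonempty open set $\widetilde A$ in its complement, $\widetilde B$ is open, and $\widetilde B$ is a neighborhood of $\overline{\widetilde\Omega}\cap\partial\overline\DD$. The last point holds because $\psi$ restricts to a homeomorphism from a (half-disk) neighborhood of each point of $\partial\overline\DD$ onto a neighborhood of its image, so taking preimages commutes with closure along the boundary circle; hence $\overline{\widetilde\Omega}\cap\partial\overline\DD=\psi^{-1}\bigl(\overline\Omega\cap\partial\overline\DD\bigr)\subset\psi^{-1}(B)=\widetilde B$. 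Granting this, for a given $\epsilon>0$ I take the $\delta>0$ that Proposition~\ref{prop:disk-area} produces for the triple $(\widetilde\Omega,\widetilde A,\widetilde B)$; if $q\in\Quad(\DD^\times)$ is nonzero with $A_q(\Omega)/A_q(\DD)>1-\delta$, then $\psi^*q$ satisfies the hypotheses of Proposition~\ref{prop:disk-area}, so $A_q(B)/A_q(\DD)=A_{\psi^*q}(\widetilde B)/A_{\psi^*q}(\DD)>1-\epsilon$.

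The only real work is the bookkeeping just sketched: one must make sure that passing to preimages turns the data into a genuine instance of Proposition~\ref{prop:disk-area}, paying attention to what happens over the puncture and along $\partial\overline\DD$. This is routine for $w\mapsto w^2$, and it is essentially the only non-formal point. If one prefers to avoid the covering trick, the alternative is to repeat version~1 of the proof of Proposition~\ref{prop:disk-area} almost verbatim: assume for contradiction a sequence $q_n\in\Quad(\DD^\times)$ of unit area violating the conclusion, use subharmonicity of $\abs{q_n}$ on $\DD^\times$ together with a normal-families argument to pass to a subsequence converging locally uniformly on $\DD^\times$ to a holomorphic $q_\infty$ and to a weak limit $\mu$ of the area measures on $\overline\DD$, and conclude as before that $q_\infty\equiv0$ and that $\mu$ is carried by $\overline\Omega\cap\partial\overline\DD$, contradicting $A_{q_n}(B)<1-\epsilon$. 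The single new ingredient that approach needs is that no $\abs{q_n}$-mass escapes into the puncture, i.e.\ the uniform bound $A_q\bigl(\DD(0,r)\bigr)\le\frac{r}{1-r}\,A_q(\DD)$ valid for all $0<r<1$ and all $q\in\Quad(\DD^\times)$; this is immediate from the simple-pole normal form, since with $q=\phi(z)\,z^{-1}\,(dz)^2$ and $\phi$ holomorphic on $\DD$ one has $A_q\bigl(\DD(0,r)\bigr)=\int_0^r I(s)\,ds$, where the circular integral $I(s)=\int_0^{2\pi}\abs{\phi(s e^{i\theta})}\,d\theta$ is nondecreasing in $s$ by subharmonicity of $\abs{\phi}$.
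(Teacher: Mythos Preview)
Your proof is correct and follows essentially the same approach as the paper: reduce to Proposition~\ref{prop:disk-area} via the branched double cover $w\mapsto w^2$, using that the simple-pole condition on $q$ makes $\psi^*q$ holomorphic across the origin and that areas of preimages double. You supply more detail than the paper (the explicit pullback computation and the check that $(\widetilde\Omega,\widetilde A,\widetilde B)$ is admissible), and your sketched alternative via normal families is also sound, but the core argument is the same.
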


\begin{proof}
  Let $s \co \D \to \D$ be the squaring map $s(z) = z^2$. We can apply
  Proposition~\ref{prop:disk-area} to the tuple
  $(s^{-1}(\Omega), s^{-1}(A), s^{-1}(B))$. For every quadratic
  differential $q
  \in \Quad(\D^\times)$ with at most a simple pole at~$0$,
  $s^* q$ is a quadratic differential on~$\D^\times$ with no pole,
  and can thus be considered as a quadratic differential on~$\D$.
  Since for any $X \subset \D^\times$,
  \[
  A_{s^*q}(s^{-1}(X)) = 2A_q(X),
  \]
  the area bounds for $s^* q$ on $s^{-1}(\Omega)$ and $s^{-1}(B)$
  imply the same bounds for $q$ on $\Omega$ and~$B$, as desired.
\end{proof}

\begin{proof}[Proof of Theorem~\ref{thm:area-surface}]
  For simplicity, if $S$ has no boundary or has non-negative
  Euler characteristic, remove disks from
  $S\setminus R$ until it has boundary and negative
  Euler characteristic. Then enlarge $R$ until it is equal to
  $S$ minus an $\epsilon$-neighborhood of $\partial S$,
  and think about $R$ as a subset of~$S$.

  Now choose a maximal set of simple, non-intersecting and
  non-parallel arcs $\{\gamma_i\}_{i=1}^{k}$ on~$S$. These will divide
  $S$ into a collection of half-pants (i.e., hexagons) and once-punctured
  bigons; arrange the arcs so that they divide~$R$ in the same way,
  as illustrated in Figure~\ref{fig:surface-decomp}. Let
  $\{P_j\}_{j=1}^{\ell}$ be the connected components of
  $S \setminus \bigcup \gamma_i$, and let $G_i$ be small disjoint
  tubular
  neighborhoods of the $\gamma_i$ inside $S$. Let $P_j' = P_j \cap
  R$ and let $G = \bigcup_i G_i$. As detailed below, we can apply
  Propositions~\ref{prop:disk-area} or~\ref{prop:punct-disk-area} to
  each triple
  $\bigl(P_j, P_j', P_j \cap G\bigr)$ to show that if the area
  of a sequence of quadratic differentials~$q_n$ on~$S$ is concentrating
  within~$R$, then
  it is actually concentrating within $G$.

  \begin{figure}
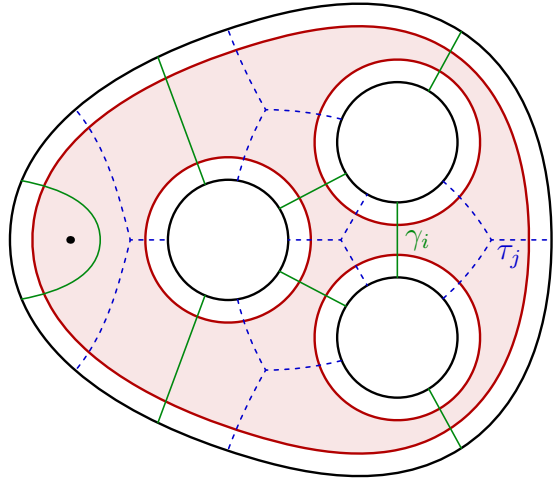

    \[
    \mfigb{embedding-5}
    \]
    \caption{Two decompositions of $S$ and~$R$. In this
      example, $S$ is a sphere with 4 holes and one puncture and
      $R$ is a smaller copy of~$S$ shaded in red. The
      arcs $\gamma_i$ (solid, in green) divide the two surfaces into
      half-pants and a once-punctured bigon. The tripods $\tau_j$ (dashed,
      in blue) divide the
      two surfaces into rectangles and a once-punctured bigon.}
    \label{fig:surface-decomp}
  \end{figure}

  We also pick another decomposition of $R$ and~$S$ into
  disks. Within each half-pants among the~$P_j$, pick a tripod
  $\tau_j$ with
  ends on the three components of $P_j \cap \partial S$ and
  intersecting $\partial R$ in three points, as
  in Figure~\ref{fig:surface-decomp}; ensure
  that $\tau_j$ is disjoint from $\overline{G}$. Let
  $\{Q_i\}_{i=1}^{k}$ be the connected components of
  $S \setminus \bigcup_j \tau_j$. Each $Q_i$ is a rectangle or a
  once-punctured bigon. Pick a
  small tubular neighborhood $T_j$ of
  $\tau_j$, small enough that each $T_j$ and $G_i$ are
  disjoint. Let $Q_i' = Q_i \cap R$ and $T = \bigcup_j T_j$.
  Propositions~\ref{prop:disk-area} and~\ref{prop:punct-disk-area}
  will again show that if
  the area of a sequence of quadratic differentials on~$S$ is
  concentrating
  within~$R$, then it is concentrating within $T$; but this
  is a contradiction, as $G$ and $T$ are
  disjoint.

  We now give the concrete estimates alluded to above. Since all areas
  are with respect to an arbitrary quadratic differential~$q \in
  \Quad(S)$, we will omit
  it from the notation for brevity. For
  each~$j$, the triple $(\overline{P_j}, P_j', G \cap P_j)$ is either a
  triple like $(\DD, \Omega, B)$ as in the statement of
  Proposition~\ref{prop:disk-area} or a triple like $(\DD^\times,
  \Omega, B)$ as in the statement of
  Proposition~\ref{prop:punct-disk-area}. We can thus find
  $\delta_j$ according to the
  propositions so that if $A(P_j') > (1-\delta)A(P_j)$,
  then $A(G \cap P_j) > (3/4)A(P_j)$. Let $\delta \coloneqq \min_j
  \delta_j$ and $\delta' \coloneqq \delta/4$.

  \begin{claim}
    If $A(R) > (1-\delta')A(S)$, then $A(G) > \frac{1}{2}\cdot A(S)$.
  \end{claim}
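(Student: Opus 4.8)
The plan is to finish the claim by a short pigeonhole estimate over the pieces $P_j$, feeding in the per-piece bounds just extracted from Propositions~\ref{prop:disk-area} and~\ref{prop:punct-disk-area}. Throughout, $A(\cdot)$ abbreviates $A_q(\cdot)$ for the fixed $q$, as above, and the key bookkeeping observation is that the arcs $\gamma_i$, being $1$-dimensional, are $A_q$-null; hence $\{P_j\}$ partitions $S$ up to a null set, and likewise $\{P_j'\}$ partitions $R$ and $\{G\cap P_j\}$ partitions $G$. In particular $A(S)=\sum_j A(P_j)$, $A(R)=\sum_j A(P_j')$, and $A(G)=\sum_j A(G\cap P_j)$.

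I would call an index $j$ \emph{good} if $A(P_j')>(1-\delta)A(P_j)$ and \emph{bad} otherwise. For a bad index, $A(P_j)-A(P_j')\ge\delta A(P_j)$, hence $A(P_j)\le\delta^{-1}\bigl(A(P_j)-A(P_j')\bigr)$. Summing over the bad indices, using that every summand $A(P_j)-A(P_j')$ is non-negative, and invoking the hypothesis $A(S)-A(R)<\delta'A(S)=(\delta/4)A(S)$,
\[
  \sum_{j\text{ bad}}A(P_j)\;\le\;\delta^{-1}\!\!\sum_{j\text{ bad}}\bigl(A(P_j)-A(P_j')\bigr)\;\le\;\delta^{-1}\sum_{j}\bigl(A(P_j)-A(P_j')\bigr)\;=\;\delta^{-1}\bigl(A(S)-A(R)\bigr)\;<\;\tfrac14 A(S).
\]
Therefore the good pieces carry most of the area: $\sum_{j\text{ good}}A(P_j)>\tfrac34 A(S)$.

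Finally, for each good $j$ I have $A(P_j')>(1-\delta)A(P_j)\ge(1-\delta_j)A(P_j)$, because $\delta=\min_j\delta_j\le\delta_j$; so the defining property of $\delta_j$ gives $A(G\cap P_j)>\tfrac34 A(P_j)$. Summing over the good indices only,
\[
  A(G)\;\ge\;\sum_{j\text{ good}}A(G\cap P_j)\;>\;\tfrac34\sum_{j\text{ good}}A(P_j)\;>\;\tfrac34\cdot\tfrac34 A(S)\;=\;\tfrac{9}{16}A(S)\;>\;\tfrac12 A(S),
\]
which is the claim. I expect no genuine obstacle here: the only points needing (routine) care are the measure-zero bookkeeping that lets $R$, $S$, and $G$ be split consistently along the $\gamma_i$, and tracking the inequality $\delta\le\delta_j$ in the right direction, so that an index good for the uniform threshold $\delta$ remains good for its own $\delta_j$.
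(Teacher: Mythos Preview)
Your proof is correct and follows essentially the same approach as the paper: split the pieces $P_j$ into good and bad according to whether $A(P_j')>(1-\delta)A(P_j)$, show the bad pieces together carry at most $\tfrac14 A(S)$, and then apply the per-piece conclusion $A(G\cap P_j)>\tfrac34 A(P_j)$ on the good pieces to get $A(G)>\tfrac{9}{16}A(S)>\tfrac12 A(S)$. The only cosmetic difference is that the paper bounds $A(P_J')>\tfrac34 A(S)$ via an inequality chain and then uses $A(P_J)\ge A(P_J')$, whereas you bound $\sum_{\text{good}}A(P_j)>\tfrac34 A(S)$ directly from the bad-piece estimate; both routes are the same pigeonhole computation.
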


  \begin{proof}
    Let $J \subset \{1,\dots,\ell\}$ be the subset of indices $j$ so
    that $A(P_j') > (1-\delta)A(P_j)$, and let
    \begin{align*}
      P_J &\coloneqq \bigcup_{j \in J} P_j & P_J' &\coloneqq P_J \cap R\\
      P_\oJ &\coloneqq \bigcup_{j \notin J} P_j & P_\oJ' &\coloneqq P_\oJ \cap R.
    \end{align*}
    Then we have
    \begin{equation*}
      (1-\delta')A(S) - A(P_J') < A(P_\oJ')
        \le (1-\delta)A(P_\oJ)
         <  (1-\delta)(A(S) - A(P_J'))
    \end{equation*}
    which simplifies to
    \begin{equation*}
        A(P_J') > \frac{\delta - \delta'}{\delta}A(S) =
                  \frac{3}{4}\cdot A(S).
    \end{equation*}
    On the other hand, by the choice
    of~$\delta$, we have $A(G \cap P_J') > (3/4)A(P_J)$,
    so
    \begin{equation*}
      A(G) \ge A(G \cap P_J')
        > \frac{3}{4}\cdot A(P_J)
        \ge \frac{3}{4}\cdot A(P_J')
        > \frac{3}{4}\cdot \frac{3}{4} \cdot A(S)
        > \frac{1}{2} \cdot A(S). \qedhere
    \end{equation*}
  \end{proof}

  An exactly parallel argument shows that there is a $\delta'' > 0$ so
  that if $A(R) > (1 - \delta'')A(S)$, then $A(T) >
  \frac{1}{2}\cdot A(S)$. Since $G \cap T = \emptyset$, this implies that
  $A(R) \le (1-\min(\delta',\delta'))A(S)$, proving the first
  statement of the theorem.

  Note that the crucial constants $\delta'$ and $\delta''$ were
  defined as a minimum over the triples
  $(P_j,P_j',G \cap P_j)$ and $(Q_i, Q_i', T \cap Q_i)$. On a
  finite cover $\wt{f} \co \wt{R} \hookrightarrow \wt{S}$ of $f$,
  we can take arcs $\wt\gamma_i$ and tripods $\wt\tau_j$ to be lifts
  of $\gamma_i$ and $\tau_j$, respectively. Then the triples on the
  $\wt{S}$ are lifts of the triples on $S$, and the same
  estimate works in~$\wt{f}$.
\end{proof}

\begin{proof}[Proof of Theorem~\ref{thm:sf-cover}]
  If $\SF[f] \ge 1$, we have already proved the result in
  Proposition~\ref{prop:SF-large-cover}. If $\SF[f] < 1$, by
  Theorem~\ref{thm:strict-emb} we may assume that $f$ is an annular conformal
  embedding. Let $K$ be the constant from
  Theorem~\ref{thm:area-surface} for the map~$f$. We must show that
  for any finite cover $\wt{f}\co \wt{R} \to \wt{S}$ of~$f$ and any simple
  multi-curve~$\wt{C}$ on~$\wt{R}$,
  \[
  \frac{\EL_{\wt{R}}[\wt{f}(\wt{C})]}{\EL_{\wt{S}}[\wt{C}]} < K.
  \]
  Let $\wt{q}$ be the quadratic differential realizing the extremal
  length of $[\wt{f}(\wt{C})]$. Then, as in the proof of
  Theorem~\ref{thm:strict-emb},
  \[
  \EL_{\wt{R}}[\wt{C}]
    \ge\frac{\ell_{\wt{f}^*\abs{\wt{q}}}[\wt{C}]^2}{A_{\wt{f}^*\abs{\wt{q}}}(\wt{R})}
    \ge\frac{\ell_{\abs{\wt{q}}}[\wt{f}(\wt{C})]^2}{K A_{\abs{\wt{q}}}(\wt{S})}
    = K^{-1} \EL_{\wt{S}}[\wt{f}(\wt{C})]. \qedhere
  \]
\end{proof}


\section{Future challenges}\label{sec:challenges}

There are several obvious questions raised by Theorems~\ref{thm:emb},
\ref{thm:strict-emb}, and~\ref{thm:sf-cover}.
The first is an analogue of Proposition~\ref{prop:SF-large-cover} when
$\SF[f] < 1$.

\begin{problem}\label{prob:inf-SF}
  Give an intrinsic characterization of $\wt\SF[f]$ for general maps
  $f \co R \to S$ between Riemann surfaces as an infimum, not
  just when $\wt\SF[f] \ge 1$.
\end{problem}

To elaborate a little, $\SF$ and $\wt\SF$ are defined as maxima. It
would be much easier to find upper bounds (as in the hard direction of
Theorem~\ref{thm:sf-cover}) if there were an alternate definition of
$\wt\SF$ as a minimum. 
For example, there are two characterizations of extremal length: as a
maximum over metrics (Definition~\ref{def:el-curves}) and
as a minimum over embeddings of annuli (Proposition~\ref{prop:el-total-el}).

When $\SF[f] \ge 1$, Proposition~\ref{prop:SF-large-cover} serves this role.
When $\SF[f] < 1$, there are many different conformal embeddings
$R \hookrightarrow S$ in the homotopy class~$[f]$. The space of such
conformal embeddings is path-connected \cite{FB18:Couch}. One could
attempt to find a canonical embedding by, for instance, gluing annuli
to the boundary components of~$R$ \cite{EM78:ConfEmbeddings}. But this
embedding seems ill-suited to give tight bounds on $\SF[f]$ or
$\wt\SF[f]$. Ideally one would want a notion of ``map with
quasi-conformal constant less than one'', but that is nonsensical.

Instead, it seems likely we need to consider
some sort of ``smeared'' maps: maps from $R$ to
probability distributions on~$S$.

\begin{problem}\label{prob:SF-smeared}
 Find an energy of smeared maps $g \co R \to \Meas(S)$
  whose minimum value is $\wt\SF[f]$.
\end{problem}

As an example of what we mean, we give one way to get an explicit upper
bound on $\wt\SF[f]$.

\begin{definition}
  A homotopy class of topological embeddings $[f] \co R
  \hookrightarrow S$ between
  Riemann surfaces is \emph{conformally loose} if, for all $y \in \overline{S}$,
  there is a conformal embedding $g \in [f]$ so that
  $y \notin \overline{f(R)}$.

Since $\overline{S}$ is compact, if $[f]\co R \to S$ is conformally
loose we
can find finitely many conformal embeddings
$f_i \in [f], i = 1,\dots,n$ so that
\begin{equation}
  \bigcap_{i=1}^n \overline{f_i(R)} = \emptyset.
  \label{eq:finite-loose}
\end{equation}
In this case, we say that $[f]$ is \emph{$n$-loose}.
\end{definition}

\begin{proposition}\label{prop:sf-loose-bound}
  If $[f]\co R \hookrightarrow S$ is $n$-loose, then
  $\wt\SF[f] \le 1-1/n$.
\end{proposition}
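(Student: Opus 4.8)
The plan is to reduce the statement to the corresponding bound on $\SF$ for an $n$-loose embedding, and then prove that bound by an averaging argument over the $n$ conformal embeddings, modeled on the proof of $(\ref{item:strict-strict}) \Rightarrow (\ref{item:strict-sf})$ in Theorem~\ref{thm:strict-emb} but with Lemma~\ref{lem:area-bound} replaced by a combinatorial area estimate. So fix conformal embeddings $f_1,\dots,f_n \in [f]$ with $\bigcap_{i=1}^n \overline{f_i(R)} = \emptyset$ as in~\eqref{eq:finite-loose}. First I would handle covers: let $p\co\wt S\to S$ be a finite covering map and $\wt f\co\wt R\to\wt S$ the cover of~$f$ as in Definition~\ref{def:cover-map}. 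Lifting the homotopy from $f$ to $f_i$ along $p$ (path lifting, starting at $\wt f$) produces a map $\wt f_i\co\wt R\to\wt S$ homotopic to $\wt f$ with $p\circ\wt f_i = f_i\circ q$; since $f_i\circ q$ is holomorphic and $p$ is a holomorphic covering, $\wt f_i$ is holomorphic, and it is an injective open map (as $f_i$ is an embedding, $q$ a covering, and $p$ a local homeomorphism), hence a conformal embedding in $[\wt f]$. Moreover $p(\wt f_i(\wt R)) = f_i(q(\wt R)) = f_i(R)$, so $\wt f_i(\wt R)\subseteq p^{-1}(\overline{f_i(R)})$, a closed set; taking closures and intersecting gives $\bigcap_i\overline{\wt f_i(\wt R)}\subseteq p^{-1}\bigl(\bigcap_i\overline{f_i(R)}\bigr)=\emptyset$. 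Thus $[\wt f]$ is again $n$-loose, so it suffices to show that $n$-looseness implies $\SF[f]\le 1-1/n$; the bound $\wt\SF[f]\le 1-1/n$ then follows by taking the supremum over finite covers.

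For the averaging argument, let $C\in\Curves^+(R)$ with $\EL_R[C]\ne 0$; we may assume $f(C)$ is non-trivial on~$S$, since otherwise $\EL_S[f(C)]=0$. Let $q = q_{f(C)}\in\Quad^+(S)$ be the Jenkins--Strebel differential from Theorem~\ref{thm:quad-diff-height}; as in the proof of Proposition~\ref{prop:el-area}, its core curves minimize $\abs q$-length in $[f(C)]$, so $\ell_{\abs q}[f(C)] = A_{\abs q}(S) = \EL_S[f(C)]$. For each $i$ set $\mu_i \coloneqq f_i^*\abs q$, a conformal pseudo-metric on~$R$ of finite positive area. Since $\bigcap_i f_i(R)=\emptyset$, every point of $S$ is missed by some $f_i(R)$, so $\sum_{i=1}^n \mathbf 1_{f_i(R)} \le n-1$ pointwise; integrating against the area measure of $\abs q$ gives $\sum_i A_{\mu_i}(R) = \sum_i A_{\abs q}(f_i(R)) \le (n-1)\,A_{\abs q}(S)$, so by pigeonhole some index $i$ satisfies $A_{\mu_i}(R)\le(1-1/n)A_{\abs q}(S)$. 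For that $i$, $f_i$ is an isometry of $(R,\mu_i)$ onto $(f_i(R),\abs q)$ and $[f_i(C)]=[f(C)]$, so any representative of $[C]$ maps to one of $[f(C)]$ of equal length, whence $\ell_{\mu_i}[C]\ge\ell_{\abs q}[f(C)]=\EL_S[f(C)]$. Plugging $\mu_i$ into Definition~\ref{def:el-curves},
\[
\EL_R[C]\ \ge\ \frac{\ell_{\mu_i}[C]^2}{A_{\mu_i}(R)}\ \ge\ \frac{\EL_S[f(C)]^2}{(1-1/n)\,A_{\abs q}(S)}\ =\ \frac{\EL_S[f(C)]}{1-1/n},
\]
so $\EL_S[f(C)]/\EL_R[C]\le 1-1/n$, and taking the supremum over $C$ gives $\SF[f]\le 1-1/n$.

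The mathematical content of the averaging step is light — it is essentially the Theorem~\ref{thm:strict-emb} computation with the pigeonhole replacing the compactness argument of Lemma~\ref{lem:area-bound} — so the main obstacle is the bookkeeping in the reduction to covers: one must verify carefully that the lifted homotopies really produce \emph{conformal embeddings} $\wt f_i$ lying in the class $[\wt f]$ (injectivity on the interior, holomorphy, openness), and that their images project onto $f_i(R)$ so that $n$-looseness is inherited. Everything else is a direct consequence of results already established.
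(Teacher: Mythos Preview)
Your proof is correct and follows essentially the same route as the paper's: reduce to showing $\SF[f]\le 1-1/n$ by observing that $n$-looseness lifts to covers, then apply pigeonhole to the areas $A_q(f_i(R))$ (the paper packages your indicator-sum argument as Lemma~\ref{lem:empty-intersect}) and run the Theorem~\ref{thm:strict-emb} computation. You spell out the cover-lifting step more carefully than the paper, which simply asserts in one line that covers of $n$-loose maps are $n$-loose.
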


\begin{proof}
  If $f$ is $n$-loose, then all covers are also $n$-loose. So
  it suffices to prove that $\SF[f] \le 1-1/n$.

  Let $(f_i)_{i=1}^n$ be the $n$ different embeddings from
  Equation~\eqref{eq:finite-loose}. For a simple multi-curve
  $C\in\Curves^+(R)$, let $q = q_{f(C)} \in
  \Quad^+(S)$ be the quadratic differential corresponding to
  $f(C)$ from Theorem~\ref{thm:quad-diff-height}. For at least
  one~$i$, we will have
  \[\frac{A_q(f_i(R))}{A_q(S)} \le 1-1/n\]
  by Lemma~\ref{lem:empty-intersect} below.
  Then the argument from case \eqref{item:strict-strict} $\Rightarrow$
  \eqref{item:strict-sf} of the proof of Theorem~\ref{thm:strict-emb}
  shows that $\EL_R[C] \le (1-1/n)\EL_S[f(C)]$, as desired.
\end{proof}

\begin{lemma}\label{lem:empty-intersect}
   If $A_1, \dots, A_n \subset X$ are $n$ subsets of a measure
   space~$X$ so that $\bigcap_{i=1}^n A_i = \emptyset$, then for at
   least one~$i$ we must have $\mu(A_i) \le (1-1/n) \mu(X)$.
\end{lemma}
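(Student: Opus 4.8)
The plan is the standard pigeonhole/subadditivity argument, and it is short. First I would rewrite the hypothesis $\bigcap_{i=1}^n A_i = \emptyset$ in complementary form: it says precisely that the sets $X \setminus A_i$ cover $X$, i.e.\ $\bigcup_{i=1}^n (X \setminus A_i) = X$. By finite subadditivity of $\mu$ (which holds for outer measure even if the $A_i$ are not assumed measurable, though in our applications each $A_i = f_i(R)$ is open, hence measurable), we get $\mu(X) = \mu\bigl(\bigcup_{i=1}^n (X\setminus A_i)\bigr) \le \sum_{i=1}^n \mu(X \setminus A_i)$. Hence at least one term satisfies $\mu(X\setminus A_i) \ge \mu(X)/n$.

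Second, I would convert this back. Assuming $\mu(X) < \infty$ (the only case of interest, since otherwise the asserted bound is vacuous, and in the applications $\mu = A_q$ has finite total mass), for that index $i$ we get $\mu(A_i) = \mu(X) - \mu(X \setminus A_i) \le \mu(X) - \mu(X)/n = (1 - 1/n)\mu(X)$, as claimed. Equivalently one can run this as a proof by contradiction: if $\mu(A_i) > (1-1/n)\mu(X)$ for every $i$, then $\mu(X\setminus A_i) < \mu(X)/n$ for every $i$, and summing over $i$ contradicts $\sum_{i=1}^n \mu(X\setminus A_i) \ge \mu(X)$. There is no genuine obstacle here; the only points deserving a word are measurability of the $A_i$ (harmless, and automatic in the intended application) and finiteness of $\mu(X)$ (which holds whenever the conclusion is not vacuous).
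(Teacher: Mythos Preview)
Your argument is correct and is exactly the ``continuous pigeonhole principle'' that the paper invokes (without further detail) as its entire proof. You have simply spelled out the standard subadditivity/complement argument that underlies that phrase.
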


\begin{proof}
  This follows from the continuous pigeonhole principle.
\end{proof}

In the language of Problem~\ref{prob:SF-smeared}, if $[f]$ is $n$-loose,
then the averaged map
\[
g(x) = \frac{1}{n} \sum_{i=1}^n f_i(x)
\]
is a smeared map from $R$ to~$S$. Likewise, if $\wt{f} \co \wt{R} \to
\wt{S}$ is $n$-loose where $q \co \wt{R} \to R$ is a finite cover of
degree~$k$, then the averaged map
\[
g(x) = \frac{1}{nk}\sum_{q(\wt{x}) = x} \,\sum_{i=1}^n \wt{f}_i(\wt{x})
\]
is a smeared map from $R$ to~$S$.

\begin{conjecture}\label{conj:loose}
  If $f \co R \to S$ is a strict conformal embedding of Riemann
  surfaces where $S$ has no punctures, there is some finite
  cover~$\wt f$
  of~$f$ that is conformally loose.
\end{conjecture}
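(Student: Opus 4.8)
The plan is to reduce conformal looseness of (a finite cover of)~$f$ to the stretch-factor criterion of Theorem~\ref{thm:emb}. Fix $y \in \overline{S}$. Since $S$ has no punctures, $y$ has a neighbourhood basis of closed topological disks (or closed half-disks, when $y \in \partial S$) $\overline{D_\epsilon} \ni y$. Setting aside the finitely many degenerate cases in which $S \setminus \overline{D_\epsilon}$ becomes a small surface, one can always push the homotopy class $[f]$ off $\overline{D_\epsilon}$: drag a small disk out of the non-empty open set $S \setminus \overline{f(R)}$ along a path, via an ambient isotopy $\psi_t$ of~$S$, until $\psi_1$ carries it over $\overline{D_\epsilon}$; then $f_{y,\epsilon} \coloneqq \psi_1 \circ f \co R \hookrightarrow S \setminus \overline{D_\epsilon}$ is a topological embedding whose composite with the inclusion $S \setminus \overline{D_\epsilon} \hookrightarrow S$ is homotopic to~$f$. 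If $\SF[f_{y,\epsilon}] \le 1$, then Theorem~\ref{thm:emb} promotes $f_{y,\epsilon}$ to a conformal embedding $g \in [f]$ with $y \notin \overline{g(R)}$. So it suffices to show that for every~$y$ there is an $\epsilon > 0$ with $\SF[f_{y,\epsilon}] \le 1$, possibly after first replacing $f$ by a finite cover.

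The next step is to show $\SF[f_{y,\epsilon}] \to \SF[f]$ as $\epsilon \to 0$. Since $\EL$ increases when the ambient surface shrinks, $\SF[f] \le \SF[f_{y,\epsilon}]$ and this quantity is monotone in~$\epsilon$; moreover for each fixed multi-curve~$C$ on~$R$ one has $\EL_{S \setminus \overline{D_\epsilon}}[f_{y,\epsilon}(C)] \to \EL_{S \setminus \{y\}}[f(C)] = \EL_S[f(C)]$, a single interior point being removable for the extremal-length problem. Because $\SF$ is the supremum of the quadratically homogeneous functionals $C \mapsto \EL_{(\cdot)}[f_{y,\epsilon}(C)]/\EL_R[C]$ over $\Curves^+_\QQ(R)$, which is dense in the finite-dimensional space $\MF^+(R)$ by Proposition~\ref{prop:curves-dense}, one would like to projectivize to the compact space $\PP\MF^+(R)$ and apply Dini's theorem to the decreasing family $\epsilon \mapsto \bigl(\EL_{S \setminus \overline{D_\epsilon}}[f_{y,\epsilon}(\,\cdot\,)]/\EL_R[\,\cdot\,]\bigr)$, upgrading pointwise convergence to uniform convergence. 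This would give $\SF[f_{y,\epsilon}] \to \SF[f] < 1$, the strict inequality coming from Theorem~\ref{thm:strict-emb} since $f$ is a strict conformal embedding, and a small~$\epsilon$ would then work for every~$y$ with no cover needed.

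The main obstacle is exactly the hypothesis Dini's theorem requires: that $[F] \mapsto \EL_{S \setminus \overline{D_\epsilon}}[f_{y,\epsilon}(F)]$ extends continuously to all of $\MF^+(R)$. This is delicate because the topological pushforward $f_* \co \MF^+(R) \to \MF^+(S)$ is genuinely discontinuous (Example~\ref{examp:erase-hole}), so one cannot merely compose continuous maps; one must show that the \emph{extremal length} of the image nevertheless varies continuously (the image degenerates, but in an area-continuous way). I expect this to be the crux of the whole argument, and to be where finite covers become essential. If the functionals turn out to be only semicontinuous and $\SF[f_{y,\epsilon}]$ stubbornly stays at~$1$, the near-extremal multi-curves on~$S$ must have their Jenkins--Strebel area concentrating near~$y$; one should then dissipate this concentration by passing to a suitable finite cover and invoking Theorem~\ref{thm:area-surface}, whose area bound $A_{\wt f^*\wt q}(\wt R) \le K A_{\wt q}(\wt S)$ with $K < 1$ is uniform over all covers (using the annular extension supplied by Theorem~\ref{thm:strict-emb}), to force $\SF[\wt f_{\wt y,\epsilon}] < 1$ for small~$\epsilon$ at every lift $\wt y$ of every point of $\wt S$. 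Running the first paragraph on $\wt f$ then exhibits $[\wt f]$ as conformally loose. A less analytic alternative would be to construct a single finite cover whose lifted deficiency $\wt S \setminus \wt f(\wt R)$ meets every tile of a fixed half-pants decomposition of $\wt S$, so that $\wt f$ is $n$-loose for an explicit~$n$, using Proposition~\ref{prop:sf-loose-bound} only as a consistency check.

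Finally, the hypothesis that $S$ has no punctures is used twice. It lets every $y \in \overline{S}$ be enclosed in an honest disk, so the target $S \setminus \overline{D_\epsilon}$ in the first paragraph is never forced to acquire a once-punctured-disk component or to become small, keeping Theorem~\ref{thm:emb} applicable. And it rules out the obvious obstruction in which a puncture of~$R$ is homotopically pinned to a puncture~$p$ of~$S$, so that $p \in \overline{g(R)}$ for every $g \in [f]$ and $[f]$ can never be conformally loose.
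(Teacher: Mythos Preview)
This statement is labelled a \emph{conjecture} in the paper, and the paper offers no proof of it; it appears in Section~\ref{sec:challenges} precisely as an open problem. So there is nothing to compare your proposal against: you are attempting to settle something the authors leave unresolved.

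As for the proposal itself, you have correctly located the real difficulty and are honest about it. The reduction in your first paragraph is sound: if for every $y$ you can find $\epsilon$ with $\SF[f_{y,\epsilon}]\le 1$, Theorem~\ref{thm:emb} does the rest. The trouble is entirely in your second and third paragraphs. Your Dini argument needs the ratio $[F]\mapsto \EL_{S\setminus\overline{D_\epsilon}}[f_{y,\epsilon}(F)]/\EL_R[F]$ to extend continuously to $\MF^+(R)$, and Example~\ref{examp:erase-hole} shows exactly why this is not automatic; you say you ``expect this to be the crux'' and propose no mechanism to establish it. Your fallback---passing to a cover and invoking Theorem~\ref{thm:area-surface}---does not obviously help, because that theorem bounds the area of $\wt f(\wt R)$ inside $\wt S$, not inside $\wt S\setminus\overline{D_\epsilon}$; the near-extremal quadratic differentials you worry about live on the punctured surface, where the uniform constant~$K$ was never proved. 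Likewise your ``less analytic alternative'' (arranging the lifted complement to meet every half-pants piece) is a restatement of the goal rather than a method: nothing in the paper tells you how to manufacture such a cover. In short, your outline is a plausible strategy with the hard step still missing, which is consistent with the statement's status as a conjecture.
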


If $[f]$ maps a puncture~$x$ of~$R$ to a puncture~$y$ of~$S$, a
neighborhood of~$y$ is in the image of every map in~$[f]$, so
$[f]$ can never be conformally loose. In this case we could
pass to a branched double cover as in the proof of
Proposition~\ref{prop:punct-disk-area}.

\begin{remark}
  In Problems~\ref{prob:inf-SF} and~\ref{prob:SF-smeared}, it may be
  that $\wt\SF[f]$ is not
  the most natural quantity to consider; there may be a more natural
  quantity that bounds $\wt\SF[f]$ from above and is less than one
  when $\wt\SF[f]$ is less than one.
\end{remark}


\bibliographystyle{hamsalpha}
\bibliography{conformal,topo,curves,drafts,dylan}

\end{document}